\numberwithin{equation}{section}
\definecolor{myred}{rgb}{0.75,0,0}
\definecolor{mygreen}{rgb}{0,0.5,0}
\definecolor{myblue}{rgb}{0,0,0.65}
    \def\CM{{\mathbb{C}}}
    \def\NM{{\mathbb{N}}}
    \def\ZM{{\mathbb{Z}}}
\newcommand{\km}{\Bbbk}
    \def\AC{{\mathcal{A}}}
    \def\DC{{\mathcal{D}}}
    \def\EC{{\mathcal{E}}}
    \def\FC{{\mathcal{F}}}
    \def\HC{{\mathcal{H}}}
    \def\LC{{\mathcal{L}}}
    \def\OC{{\mathcal{O}}}
    \def\PC{{\mathcal{P}}}
    \def\SC{{\mathcal{S}}}
\renewcommand\l{\lambda}
\def\a{\alpha}
\def\b{\beta}
\def\G{\Gamma}
\def\D{\Delta}
\def\e{\varepsilon}
\def\L{\Lambda}
\def\r{\rho}
\newcommand{\nc}{\newcommand}
\DeclareMathOperator{\Obj}{Obj}
\DeclareMathOperator{\Mor}{Mor}
\def\to{\rightarrow}
\nc{\ic}{\mathbf{IC}}
\nc{\gl}{{\mathfrak{gl}}}
\DeclareMathOperator{\Hom}{Hom}
\DeclareMathOperator{\End}{End} 
\DeclareMathOperator{\id}{Id}
\DeclareMathOperator{\im}{im}
\DeclareMathOperator{\Rad}{rad}
\newtheorem{thm}{Theorem}[section] 
\newtheorem{lemma}[thm]{Lemma}
\newtheorem{prop}[thm]{Proposition}
\newtheorem{cor}[thm]{Corollary}
\newtheorem{porism}[thm]{Porism}
\newenvironment{manualtheorem}[1]{%
  \manualtheoreminner
}{\endmanualtheoreminner}
\newenvironment{manualcorollary}[1]{%
  \manualcorollaryinner
}{\endmanualcorollaryinner}
\theoremstyle{definition}
\newtheorem{defn}[thm]{Definition}
\newtheorem{ex}[thm]{Example}
\newtheorem{remark}[thm]{Remark}
\newenvironment{manualdefinition}[1]{%
  \manualdefinitioninner
}{\endmanualdefinitioninner}
\DeclareMathOperator{\Ext}{Ext}
\DeclareMathOperator{\cok}{cok}
\newcommand{\into}{\hookrightarrow}
\def\Mod{\text{-}\mathrm{Mod}}
\def\rMod{\mathrm{Mod}\text{-}}
\def\mod{\text{-}\mathrm{mod}}
\def\rmod{\mathrm{mod}\text{-}}
\DeclareMathOperator{\Loc}{Loc}
\def\gl{\mathfrak{gl}}
\def\ic{\mathbf{IC}}
\def\multiset#1#2{\ensuremath{\left(\kern-.3em\left(\genfrac{}{}{0pt}{}{#1}{#2}\right)\kern-.3em\right)}}
\tikzset{curve/.style={settings={#1},to path={(\tikztostart)
    .. controls ($(\tikztostart)!\pv{pos}!(\tikztotarget)!\pv{height}!270:(\tikztotarget)$)
    and ($(\tikztostart)!1-\pv{pos}!(\tikztotarget)!\pv{height}!270:(\tikztotarget)$)
    .. (\tikztotarget)\tikztonodes}},
    settings/.code={\tikzset{quiver/.cd,#1}
        \def\pv##1{\pgfkeysvalueof{/tikz/quiver/##1}}},
    quiver/.cd,pos/.initial=0.35,height/.initial=0}
\tikzset{tail reversed/.code={\pgfsetarrowsstart{tikzcd to}}}
\tikzset{2tail/.code={\pgfsetarrowsstart{Implies[reversed]}}}
\tikzset{2tail reversed/.code={\pgfsetarrowsstart{Implies}}}
\tikzset{no body/.style={/tikz/dash pattern=on 0 off 1mm}}
\title[Stratifications of abelian categories]{Stratifications of abelian categories}
\author{Giulian Wiggins}
\begin{document}

\maketitle

\begin{abstract} 
This paper studies abelian categories that can be decomposed into smaller abelian categories via iterated recollements - such a decomposition we call a {\em stratification}. Examples include the categories of (equivariant) perverse sheaves and $\e$-stratified categories (in particular highest weight categories) in the sense of Brundan-Stroppel \cite{BS18}.

We give necessary and sufficient conditions for an abelian category with a stratification to be equivalent to a category of finite dimensional modules of a finite dimensional algebra - this generalizes the main result of Cipriani-Woolf \cite{CW21}. Furthermore, we give necessary and sufficient conditions for such a category to be $\e$-stratified - this generalizes the characterisation of highest weight categories given by Krause \cite{Kra17a}.
\end{abstract}

%
%

\section{Introduction}\label{intro}

A {\em recollement} of abelian categories is a short exact sequence of abelian categories
\[
\begin{tikzcd}
	{0} & {\AC_Z} & \AC & {\AC_U} & {0}
	\arrow[from=1-1, to=1-2]
	\arrow["{i_*}", from=1-2, to=1-3]
	\arrow["{j^*}", from=1-3, to=1-4]
	\arrow[from=1-4, to=1-5]
\end{tikzcd}
\]
in which $\AC_Z$ is a Serre subcategory of $\AC$ (with Serre quotient $\AC_U$), $i_*$ has both a left and right adjoint, and $j^*$ has fully-faithful left and right adjoints. 
We usually denote such a recollement of abelian categories by the  diagram
\[
\begin{tikzcd}
	{\AC_Z} && \AC && {\AC_U}
	\arrow["{i_*}", from=1-1, to=1-3]
	\arrow["{i^{*}}"', shift right=5, from=1-3, to=1-1]
	\arrow["{i^{!}}", shift left=5, from=1-3, to=1-1]
	\arrow["{j^*}", from=1-3, to=1-5]
	\arrow["{j_{!}}"', shift right=5, from=1-5, to=1-3]
	\arrow["{j_{*}}", shift left=5, from=1-5, to=1-3]
\end{tikzcd}
\]
where $(i^*, i_*, i^!)$ and $(j_!, j^*, j_*)$ are adjoint triples.

This definition is motivated by recollements of triangulated categories as defined by Beilinson, Bernstein and Deligne \cite{BBD82} to generalize  Grothendieck's six functors relating the constructible derived category, $\DC (X, \km)$, of sheaves on a variety $X$ (with coefficients in a field $\km$) and the constructible derived categories, $\DC (Z, \km)$ and $\DC(U, \km)$, of sheaves on a closed subvariety $Z \subset X$ and open complement $U := X \backslash Z$ i.e. the situation:
\[
\begin{tikzcd}
	{\DC (Z, \km)} && \DC (X, \km) && {\DC (U, \km)}
	\arrow["{i_{*}}", from=1-1, to=1-3]
	\arrow["{i^{*}}"', shift right=5, from=1-3, to=1-1]
	\arrow["{i^{!}}", shift left=5, from=1-3, to=1-1]
	\arrow["{j^{*}}", from=1-3, to=1-5]
	\arrow["{j_{!}}"', shift right=5, from=1-5, to=1-3]
	\arrow["{j_{*}}", shift left=5, from=1-5, to=1-3]
\end{tikzcd}
\]
Here, $i \colon Z \into X$ is the closed embedding and $j \colon U \into X$ is the complimentary open embedding.

Note that if $\SC h (X, \km)$ is the category of sheaves on $X$, and ${\HC}^{i} \colon \DC (X, \km) \to \SC h (X, \km)$ is the $i$-th cohomology functor, then the following is an example of a recollement of abelian categories.
\[
\begin{tikzcd}
	{\SC h (Z, \km)} && {\SC h (X, \km)} && {\SC h (U, \km)}
	\arrow["{i_{*}}", from=1-1, to=1-3]
	\arrow["{{\HC}^{0} i^{*}}"', shift right=5, from=1-3, to=1-1]
	\arrow["{{\HC}^{0} i^{!}}", shift left=5, from=1-3, to=1-1]
	\arrow["{j^{*}}", from=1-3, to=1-5]
	\arrow["{{\HC}^{0} j_{!}}"', shift right=5, from=1-5, to=1-3]
	\arrow["{{\HC}^{0} j_{*}}", shift left=5, from=1-5, to=1-3]
\end{tikzcd}
\]
More generally, given a recollement of triangulated categories with compatible $t$-structures\footnote{Given a recollement of triangulated categories $\DC_{Z}$, $\DC$, $\DC_{U}$, we consider $t$-structures on these categories to be {\em compatible} if 
\begin{align*}
\DC^{\leq 0} &= \{
X \in \DC ~|~ i^{*}X \in \DC^{\leq 0}_{Z}, \quad j^* X \in \DC_{U}^{\leq 0}
\}, \\
\DC^{\geq 0} &= \{
X \in \DC ~|~ i^{!}X \in \DC^{\geq 0}_{Z}, \quad j^! X \in \DC_{U}^{\geq 0}
\} .
\end{align*}
 }, there is a canonical recollement of abelian categories on the hearts of the $t$-structure \cite[Proposition 1.4.16]{BBD82} defined using the zero-th cohomology functor arising from the $t$-structure (as in the above example).

In representation theory, recollements of abelian categories arise from modules of algebras $A$ with an idempotent $e$. In this situation there is a recollement
\[
\begin{tikzcd}
	{0} & {A / AeA \Mod} & A \Mod & {eAe \Mod} & {0}
	\arrow[from=1-1, to=1-2]
	\arrow["{i_*}", from=1-2, to=1-3]
	\arrow["{j^*}", from=1-3, to=1-4]
	\arrow[from=1-4, to=1-5]
\end{tikzcd}
\]
where $i_* \colon A / AeA \Mod \to A \Mod $ is the restriction functor, and $j^* \colon A \Mod \to eAe \Mod$ is the functor sending an $A$-module $M$ to the $eAe$-module $eM$.

Recollements often arise as part of an iterated collection of recollements, giving a kind of filtration of a larger category by smaller categories. Consider, for example, the category, $P_{\L} (X, \km)$, of perverse sheaves that are constructible with respect to a stratification, $X := \bigcup_{\l \in \L}X_{\l}$, of a variety $X$. 
For each strata $X_{\l}$, there is a recollement of abelian categories
\[
\begin{tikzcd}
	{0} & {P_{\L} (\overline{X_{\l}} \setminus X_{\l}, \km)} & {P_{\L} (\overline{X_{\l}}, \km)} & {\Loc^{ft} (X_{\l}, \km)} & {0}
	\arrow[from=1-1, to=1-2]
	\arrow[from=1-2, to=1-3]
	\arrow[from=1-3, to=1-4]
	\arrow[from=1-4, to=1-5]
\end{tikzcd}
\]
where $\Loc^{ft} (X_{\l}, \km)$ is the category of local systems on $X_{\l}$ of finite type. 
More generally, if $\L' \subset \L$ is such that $X_{\L'} := \bigcup_{\l \in \L'} X_{\l}$ is a closed subvariety of $X$, and $X_{\l}$ is open in $X_{\L'}$, then the following is a recollement of abelian categories.
\[
\begin{tikzcd}
	{0} & {P_{\L} (X_{\L' \setminus \{\l\} }, \km)} & {P_{\L} (X_{\L'}, \km)} & {\Loc^{ft} (X_{\l}, \km)} & {0}
	\arrow[from=1-1, to=1-2]
	\arrow[from=1-2, to=1-3]
	\arrow[from=1-3, to=1-4]
	\arrow[from=1-4, to=1-5]
\end{tikzcd}
\]

Similar iterations of recollement occur in highest weight categories (as defined in \cite{CPS88b}). Indeed, let $\AC$ be a highest weight category with respect to a poset $\L$, and let $\{ \D_{\l} ~|~ \l \in \L \}$ be the collection of standard objects in $\AC$. For each lower\footnote{A subposet $\L' \subset \L$ is {\em lower} if for any $\l, \mu \in \L$, $\mu \leq \l$ and $\l \in \L'$ implies $\mu \in \L'$.} subposet $\L' \subset \L$, let $\AC_{\L'}$ be the Serre subcategory generated by objects $\{ \D_{\l} ~|~ \l \in \L' \}$. For each lower subposet $\L' \subset \L$ and  maximal $\l \in \L'$ there is a recollement of abelian categories
\[
\begin{tikzcd}
	{0} & {\AC_{\L' \setminus \{\l\}}} & {\AC_{\L'}} & {\End_{\AC} (\D_{\l})^{op} \Mod} & {0}
	\arrow[from=1-1, to=1-2]
	\arrow[from=1-2, to=1-3]
	\arrow["{j^*}", from=1-3, to=1-4]
	\arrow[,from=1-4, to=1-5]
\end{tikzcd}
\]
where $j^* := \Hom(\D_{\l}, -): \AC_{\L'} \to \End_{\AC} (\D_{\l})^{op} \Mod$.

We unify these examples by the following notion.

\begin{manualdefinition}{\ref{stratification}}
A  {\em stratification} of an abelian category $\AC$ by a non-empty poset $\L$ consists of the following data
\begin{enumerate}
\item[(i)]
For each lower subposet $\L' \subset \L$, define a Serre subcategory, $\AC_{\L'}$, of $\AC$. 
\item[(ii)]
For each  $\l \in \L$, define an abelian category $\AC_{\l}$. These we call {\em strata categories}.
\end{enumerate}
This data must satisfy the conditions
\begin{enumerate}
\item
$\AC_{\emptyset} = 0$ and $\AC_{\L} = \AC$.
\item
For each pair of lower subposets $\L_{1}' \subset \L_{2}' \subset \L$, there are inclusions of Serre categories 
$ \AC_{\L_{1}'} \into \AC_{\L_{2}'}$.
\item
For each lower subposet $\L' \subset \L$, and maximal $\l \in \L'$ there is a recollement
\[
\begin{tikzcd}
	{0} & {\AC_{\L' \setminus \{\l\}}} & \AC_{\L'} & {\AC_{\l}} & {0}
	\arrow[from=1-1, to=1-2]
	\arrow[from=1-2, to=1-3]
	\arrow[from=1-3, to=1-4]
	\arrow[from=1-4, to=1-5]
\end{tikzcd}
\]
\end{enumerate}
\end{manualdefinition}

For example, the category $P_{\L} (X, \km)$ has a stratification by the poset $\L$ with closure order: $\l \leq \mu$ if $X_{\l} \subset \overline{X_{\mu}}$. Further examples include equivariant perverse sheaves, highest weight categories, and  $\e$-stratified categories in the sense of Brundan and Stroppel \cite{BS18}.

Recall that, for a field $\km$, a $\km$-linear abelian category $\AC$ is {\em finite over $\km$} if $\AC$ is equivalent to a category of finite dimensional modules of a finite dimensional $\km$-algebra.
This paper is inspired by a result of Cipriani and Woolf \cite[Corollary 5.2]{CW21} that says that a category of perverse sheaves (with coefficients in a field) on a space stratified by finitely many strata is finite if and only if the same is true for each category of finite type local systems on each stratum. We extend this result by showing that an abelian category with a stratification by a finite poset is finite if and only if the same if true of all strata categories (Corollary \ref{recolenoughprojcor1}). Moreover, we give necessary and sufficient conditions for a finite abelian category to be $\e$-stratified (Theorem \ref{eStratTheorem}). This result specializes to give necessary and sufficient conditions for a finite abelian category to be standardly stratified in the sense of Cline, Parshall, Scott \cite{CPS96}, and further specialises to recover a characterisation of highest weight categories due to Krause \cite{Kra17a}.

 \subsection{Outline and explanation of main results}
Let $\AC$ be an abelian category with a stratification by a poset $\L$. For each $\l \in \L$, define the Serre quotient functor $j^{\l} \colon \AC_{\{\mu \in \L ~|~ \mu \leq \l \}} \to \AC_{\l}$, and let $j_{!}^{\l} \colon \AC_{\l} \to \AC_{\{\mu \in \L ~|~ \mu \leq \l \}}$ and $j_{*}^{\l} : \AC_{\l} \to \AC_{\{\mu \in \L ~|~ \mu \leq \l \}}$ be the left and right adjoints of $j^{\l}$. By a slight abuse of notation, write $j_{!}^{\l} \colon \AC_{\l} \to \AC$ and $j_{*}^{\l} \colon \AC_{\l} \to \AC$ for the functors obtained by postcomposing with the inclusion functor $\AC_{\{\mu \in \L ~|~ \mu \leq \l \}} \into \AC$.

In Section \ref{recprel} we recall some basic features of recollements and stratifications, and describe examples of stratifications of abelian categories.

In Section \ref{recintext} we define, for each $\l \in \L$, the {\em intermediate-extension functor} $j^{\l}_{!*}: \AC_{\l} \to \AC$:
\[
j_{!*}^{\l} X := \im(\overline{1_X}:  j_{!}^{\l} X \to j_{*}^{\l} X ),
\]
where $\overline{1_X}$ is the morphism corresponding to the identity $1_X$ under the natural isomorphism
\[
\Hom_{\AC} (j_{!}^{\l} X, j_{*}^{\l} X) \simeq \Hom_{\AC_{\l}} ( X, j^{\l} j_{*}^{\l} X) \simeq \Hom_{\AC_{\l}} ( X,  X) .
\]
A result of  Kuhn \cite[Proposition 4.6]{Kuh94} (see also Proposition \ref{recol4})  is that every simple object $L \in \AC$ is of the form $j^{\l}_{!*} L_{\l}$, for a unique (up to isomorphism) simple object $L_{\l} \in \AC_{\l}$ and unique $\l \in \L$. Proposition \ref{recol41} says that if $\AC$ is an abelian category with a stratification by a finite poset, then every object in $\AC$ has a finite filtration by simple objects if and only if the same is true of all the strata categories. 


In Section \ref{rechom} we prove the main result of the paper (Theorem \ref{recolenoughproj}) that describes when the middle category in a recollement has enough projectives.

\begin{manualtheorem}{\ref{recolenoughproj}}
Consider a recollement:
\[
\begin{tikzcd}
	{0} & {\AC_Z} & \AC & {\AC_U} & {0}
	\arrow[from=1-1, to=1-2]
	\arrow["{i_*}", from=1-2, to=1-3]
	\arrow["{j^*}", from=1-3, to=1-4]
	\arrow[from=1-4, to=1-5]
\end{tikzcd}
\]
Suppose $\AC_U$ and $\AC_Z$ have finitely many simple objects and every object has a finite filtration by simple objects. Suppose moreover that for any simple objects 
 $A, B$ in $\AC$,
\[
\dim_{\End_{\AC} (B)} \Ext^{1}_{\AC} (A, B) < \infty .
\]
Then $\AC$ has enough  projectives if and only if both $\AC_U$ and $\AC_Z$ have enough projectives.\footnote{Since $\End_{\AC} (B)$ is a division ring, any $\End_{\AC} (B)$-module is free. For an $\End_{\AC} (B)$-module $M$, $\dim_{\End_{\AC} (B)} M$ is the rank of $M$ as a free $\End_{\AC} (B)$-module.}
\end{manualtheorem}

A category $\AC$ is finite over $\km$ if and only if $\AC$ has finite dimensional $\Hom$-spaces, finitely many simple objects, enough projectives, and every object has finite filtration by simple objects. Such categories also have finite dimensional $\Ext$-spaces (see e.g. Proposition \ref{finExt}). In particular, the following corollary follows from 
Theorem \ref{recolenoughproj}.

\begin{manualcorollary}{\ref{recolenoughprojcor}}
For any field $\km$, a $\km$-linear abelian category with a stratification by a finite poset is finite if and only if the same is true for all strata categories.
\end{manualcorollary}


To state the results in Sections \ref{standardsec} and \ref{BrunStropSection}, let $\AC$ have enough projectives and injectives, finitely many simple objects, and suppose every object in $\AC$ has finite length. Suppose $\AC$ has a stratification by a finite poset $\L$. Let $B$ be a set indexing the simple objects in $\AC$ (up to isomorphism) and write $L(b)$ (respectively $P(b)$, $I(b)$) for the simple (respectively projective indecomposable, injective indecomposable) object corresponding to $b \in B$.

Define the {\em stratification function }
\[
\rho: B \to \L
\]
 that maps each $b \in B$ to the corresponding $\l \in \L$ in which $L(b) = j^{\l}_{!*} L_{\l} (b)$ for some simple object $L_{\l} (b) \in \AC_{\l}$. 
 Write $P_{\l} (b)$ and $I_{\l} (b)$ for the projective cover and injective envelope of $L_{\l} (b)$ in $\AC_{\l}$. 
 
For $b \in B$ and $ \l = \r (b)$, define the {\em standard} and {\em costandard} objects
\[
\D (b) := j^{\l}_! P_{\l} (b), \qquad \nabla (b) := j^{\l}_* I_{\l} (b).
\]
%

Porism \ref{standards} says that every projective indecomposable, $P(b)$, in $\AC$ has a filtration by quotients of standard objects, $\D (b')$, in which $\rho (b') \geq \rho (b)$. Dually, every injective indecomposable, $I(b)$, of $\AC$ has a filtration by subobjects of costandard objects, $\nabla (b')$, in which $\rho (b') \geq \rho (b)$. This result follows from the construction of projective indecomposable objects given in the proof of Theorem \ref{recolenoughproj}.

Standard and costandard objects play a crucial role in representation theoretic applications of stratifications of abelian categories, where one often requires that projective and/or injective indecomposable objects have filtrations by standard and/or costandard objects. For example, both of these conditions are required in Cline-Parshall-Scott's definition of highest weight category \cite{CPS88b}. 

Categories whose projective indecomposables have filtrations by standard objects have been widely studied - beginning with the work of Dlab \cite{Dla96} and Cline, Parshall and Scott \cite{CPS96}. 
Categories in which both projective objects have a filtration by standard objects and injective objects have a filtration by costandard objects have been studied by various authors (see e.g. \cite{Fri07}, \cite{CZ19}, \cite{LW15}). Brundan and Stroppel \cite{BS18} (based on the work of \cite{ADL98}) define a general framework called an {\em $\e$-stratified category} that includes these situations as special cases. 
We recall this definition now. 

For a {\em sign function} $\e \colon \Lambda
\to \{+,-\}$, define
the {\em $\e$-standard} and {\em $\e$-costandard objects}
\begin{align*}
\D_\e(b) &:= \left\{
\begin{array}{ll}
j^{\l}_! P_{\l} (b)&\text{if $\e(\l) = +$}\\
j^{\l}_! L_{\l}(b)&\text{if $\e(\l) = -$}
\end{array}
\right.,&
\nabla_\e(b) &:= \left\{
\begin{array}{ll}
 j^{\l}_* L_{\l}(b)&\text{if $\e(\l) = +$}\\
 j^{\l}_* I_{\l} (b)&\text{if $\e(\l) = -$}
\end{array}
\right.,
\end{align*}
where $\l = \rho (b)$.

Brundan and Stroppel \cite{BS18} say that $\AC$  is {\em $\e$-stratified} if the following equivalent conditions hold:
\begin{enumerate}
\item
Every projective indecomposable, $P(b)$, has a filtration by $\e$-standard objects, $\D_{\e} (b')$, in which $\rho (b') \geq \rho (b)$. 
\item
Every injective indecomposable, $I(b)$, has a filtration by $\e$-costandard objects, $\nabla_{\e} (b')$, in which $\rho (b') \geq \rho (b)$.
\end{enumerate}

Note that $\AC$ is a {\em highest weight category} if and only if each $\AC_{\l}$ is semisimple and $\AC$ is $\e$-stratified for any (and all) $\e \colon \L \to \{+,-\}$.

The following result gives a criterion for a finite abelian category to be $\e$-stratified.

\begin{manualtheorem}{\ref{eStratTheorem}}
A finite abelian category $\AC$ with a stratification by a finite poset $\L$ is $\e$-stratified (for a function $\e \colon \L \to \{+,-\}$) if and only if the following conditions hold:
\begin{enumerate}
\item
For each inclusion of Serre subcategories $i_* \colon \AC_{\L'} \to \AC_{\L}$, and objects $X,Y \in \AC_{\L'}$, there is a natural isomorphism 
$
\Ext^{2}_{\AC_{\L'}} (X,Y) \simeq  \Ext^{2}_{\AC} (i_*X,i_*Y) .
$
\item
For each $\l \in \L$,
\begin{enumerate}
\item
If $\e (\l) = +$ then $j^{\l}_* : \AC_{\l} \to \AC$ is exact.
\item
If $\e (\l) = -$ then $j^{\l}_! : \AC_{\l} \to \AC$ is exact.
\end{enumerate}
\end{enumerate}
\end{manualtheorem}

Recollements satisfying Condition (1) are known as {\em 2-homological recollements}. These are studied by Psaroudakis in \cite{Psa13}.

One application of Theorem \ref{eStratTheorem} is a criterion for a category of (equivariant) perverse sheaves to be $\e$-stratified - although we remark that checking these conditions (particularly the first condition) is difficult in general. 

\subsection{Acknowledgement}
This article revises and extends part of the author's PhD thesis \cite{Wig23}. This paper has benefited from discussions with Oded Yacobi, Kevin Coulembier, and feedback from an examiner of \cite{Wig23}. This work was supported by Australian Research Council grant DP190102432.


\section{Preliminaries}\label{recprel}

We begin with a definition of recollement. The notation used in this definition will be used throughout the paper.

\begin{defn}\label{recdef}

A {\em recollement of abelian categories} consists of three abelian categories $\AC_Z$, $\AC$ and $\AC_U$ and functors:
\begin{equation}\label{recollement0}
\begin{tikzcd}
	{\AC_Z} && \AC && {\AC_U}
	\arrow["{i_*}", from=1-1, to=1-3]
	\arrow["{i^{*}}"', shift right=5, from=1-3, to=1-1]
	\arrow["{i^{!}}", shift left=5, from=1-3, to=1-1]
	\arrow["{j^*}", from=1-3, to=1-5]
	\arrow["{j_{!}}"', shift right=5, from=1-5, to=1-3]
	\arrow["{j_{*}}", shift left=5, from=1-5, to=1-3]
\end{tikzcd}
\end{equation}
satisfying the conditions:
\begin{enumerate}
\item[(R1)]
$(i^*, i_*, i^!)$ and $(j_!, j^*, j_*)$ are adjoint triples.
\item[(R2)]
The functors $i_*$, $j_!$, $j_*$ are fully-faithful. Equivalently the adjunction maps $i^* i_* \to \id \to i^! i_*$ and $j^* j_* \to \id \to j^* j_!$ are isomorphisms.
\item[(R3)]
The functors satisfy  $j^* i_* = 0$ (and so by adjunction $i^* j_! = 0 = i^! j_*$).
\item[(R4)]
The adjunction maps produce exact sequences for each object $X \in \AC$:
\begin{align}
j_! j^* X \to &X \to i_* i^* X \to 0 \label{adex1}\\
0 \to i_* i^! X \to &X \to j_* j^* X \label{adex2}
\end{align}
\end{enumerate}
Alternatively, condition (R4) can be replaced by the condition
\begin{enumerate}
\item[(R4')]
For any object $X \in \AC$, if $j^* X = 0$ then X is in the essential image of $i_*$.
\end{enumerate}

A {\em recollement of triangulated categories} is defined in the same way as a recollement of abelian categories except that condition (R4) is replaced by the existence of the triangles:
\begin{align}
j_! j^* X \to &X \to i_* i^* X \to  \\
 i_* i^! X \to &X \to j_* j^* X \to
\end{align}
for each object $X$.
%
\end{defn}

\begin{remark}
The interchangibility of (R4) and (R4') follows from the following argument. If  $j^* X = 0$ then (R4) implies that $i_* i^! X \simeq X \simeq i_* i^* X$ and so $X$ is in the essential image of $i_*$. Conversely let $\mu: j_! j^* \to \id$ and $\eta: \id \to i_* i^*$ be the adjunction natural transformations. Then there is a commutative diagram
\[
\begin{tikzcd}
	{j_! j^* X} & X & {\cok \mu_X} & 0 \\
	0 & {i_* i^*X} & {i_* i^*(\cok \mu_X)} & 0
	\arrow["{\mu_X}", from=1-1, to=1-2]
	\arrow[from=1-2, to=1-3]
	\arrow["{\eta_{j_! j^* X}}"', from=1-1, to=2-1]
	\arrow["{\eta_X}"', from=1-2, to=2-2]
	\arrow["{\eta_{\cok \mu_X}}", from=1-3, to=2-3]
	\arrow[from=2-1, to=2-2]
	\arrow[from=2-2, to=2-3]
	\arrow[from=1-3, to=1-4]
	\arrow[from=2-3, to=2-4]
\end{tikzcd}
\]
in which the rows are exact. By applying $j^*$ to the top row we see that $j^* \cok \mu_X = 0$ and so (R4') implies that $\cok \mu_X \simeq i_* i^*(\cok \mu_X) \simeq i_* i^*X$. Equation (\ref{adex2}) holds by a similar argument.
\end{remark}

Write $\AC^Z$ for the essential image of $i_*$. To reconcile Definition \ref{recdef} with the definition of recollement in Section \ref{intro}, note that by (R2), $\AC^Z \simeq \AC_Z$, and by (R4'), $\AC^Z$ is the kernel of the exact functor $j^*$ and is hence a Serre subcategory of $\AC$.

It will be useful to note that if we extend the sequences (\ref{adex1}) and (\ref{adex2}) to exact sequences
\begin{align}
0 \to K \to j_! j^* X \to &X \to i_* i^* X \to 0 \label{adex3}\\
0 \to i_* i^! X \to &X \to j_* j^* X \to K'  \to 0\label{adex4}
\end{align}
then $K$ and $K'$ are in $\AC^Z$. Indeed, by applying the exact functor $j^*$ to (\ref{adex3}) we get that $j^* K = 0$ and so $i_* i^! K \simeq K \simeq i_* i^* K$.  Likewise by applying $j^*$ to (\ref{adex4}) we get that $K' \in \AC^Z$.

Given a recollement of abelian or triangulated categories with objects and morphisms as in (\ref{recollement0}), the opposite categories form the following recollement
\begin{equation*}
\begin{tikzcd}
	{\AC_{Z}^{op}} && \AC^{op} && {\AC_{U}^{op}}
	\arrow["{i_{*}}", from=1-1, to=1-3]
	\arrow["{i^{!}}"', shift right=5, from=1-3, to=1-1]
	\arrow["{i^{*}}", shift left=5, from=1-3, to=1-1]
	\arrow["{j^{*}}", from=1-3, to=1-5]
	\arrow["{j_{*}}"', shift right=5, from=1-5, to=1-3]
	\arrow["{j_{!}}", shift left=5, from=1-5, to=1-3]
\end{tikzcd}
\end{equation*}
which we call the {\em opposite recollement}. 

The following proposition describes a useful way to characterise the functors $i^*$ and $i^!$ in any recollement.

\begin{prop}\label{recol1}
Let $\AC$ be an abelian category with a recollement as in (\ref{recollement0}). Then for any object $X \in \AC$:
\begin{enumerate}
\item[(i)]
$i_* i^{*} X$ is the largest quotient object of $X$ in $\AC^Z$.
\item[(ii)]
$i_* i^{!} X$ is the largest subobject of $X$ in $\AC^Z$.
\end{enumerate}
\end{prop}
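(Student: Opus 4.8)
The plan is to prove parts (i) and (ii) directly from the recollement axioms, using the exact sequences (R4) together with the fact (already established in the excerpt) that in the extended sequences \eqref{adex3} and \eqref{adex4} the extra terms $K$ and $K'$ also lie in $\AC^Z$. Since (ii) is (i) applied to the opposite recollement (where $i^*$ and $i^!$ swap roles), it suffices to prove (i) carefully; I would remark this at the end to avoid repeating the argument.

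For part (i): first I would show that $i_* i^* X$ really is a quotient of $X$. The adjunction counit gives $\eta_X \colon X \to i_* i^* X$, and the exact sequence \eqref{adex1} shows $j_! j^* X \to X \to i_* i^* X \to 0$ is exact, so $\eta_X$ is an epimorphism; hence $i_* i^* X$ is a quotient of $X$ lying in $\AC^Z$. Next, for maximality, suppose $q \colon X \twoheadrightarrow Q$ is any epimorphism with $Q \in \AC^Z$, say $Q \simeq i_* Q'$ with $Q' = i^* Q$ (using that $i_*$ is fully faithful and $i^* i_* \simeq \id$). I need to factor $q$ through $\eta_X$. By adjunction $(i^*, i_*)$, the morphism $q \colon X \to i_* Q'$ corresponds to a morphism $i^* X \to Q'$, and postcomposing $\eta_X$ with $i_*$ of that morphism gives a map $i_* i^* X \to i_* Q' \simeq Q$; I would check via the triangle identities for the adjunction that this composite equals $q$. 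The remaining point is that this factoring morphism $i_* i^* X \to Q$ is itself an epimorphism — but that is automatic since $q$ is an epimorphism and it factors through it. This shows $Q$ is a quotient of $i_* i^* X$, i.e. $i_* i^* X$ is the largest such quotient.

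The one genuine subtlety — the step I expect to be the main obstacle — is phrasing "largest quotient object in $\AC^Z$" precisely and verifying that the factorization map $i_* i^* X \to Q$ is compatible with the quotient maps from $X$ (so that this really is a partial order on subobjects-of-quotients, not just an existence of some morphism). The clean way to handle this is to work with the kernel: let $K_0 = \ker \eta_X$; I claim $K_0$ is the smallest subobject of $X$ whose quotient lies in $\AC^Z$. From \eqref{adex3}, $K_0$ is the image of $j_! j^* X \to X$, and applying the exact functor $j^*$ to $X \twoheadrightarrow Q$ with $j^* Q = 0$ (since $Q \in \AC^Z$ and $j^* i_* = 0$) shows $j^* X \to j^* Q$ is zero, hence $j^*(\ker q) \simeq j^* X$; then a diagram chase with the counit $j_! j^* X \to X$ shows $K_0 \subseteq \ker q$, which is exactly the statement that $X/K_0 = i_* i^* X$ surjects onto $X/\ker q = Q$. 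I would present the argument in this kernel form since it makes the "largest" claim unambiguous.

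For part (ii): I would simply observe that applying part (i) to the opposite recollement — in which the roles of $i^*$ and $i^!$ are interchanged, and quotient objects become subobjects — yields the statement that $i_* i^! X$ is the largest subobject of $X$ lying in $\AC^Z$. Alternatively one gives the dual argument directly using \eqref{adex2} and \eqref{adex4}: the unit $i_* i^! X \to X$ is a monomorphism with cokernel mapping into $j_* j^* X$, and any monomorphism $S \hookrightarrow X$ with $S \in \AC^Z$ satisfies $j^* S = 0$, so it factors through $i_* i^! X$; I would leave this as the dual remark.
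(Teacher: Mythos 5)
Your proof is correct and the core argument coincides with the paper's: establish that $\eta_X \colon X \to i_* i^* X$ is an epimorphism with target in $\AC^Z$, use the adjunction $(i^*, i_*)$ together with full-faithfulness of $i_*$ to factor any epimorphism $X \twoheadrightarrow Q$ with $Q \in \AC^Z$ through $\eta_X$, and deduce part (ii) by passing to the opposite recollement. Two small remarks. First, a terminology slip: $\eta_X \colon X \to i_* i^* X$ is the \emph{unit} of the adjunction $(i^*, i_*)$, not the counit. Second, the kernel argument you present as resolving ``the one genuine subtlety'' is sound but redundant: once the adjunction produces $\phi \colon i_* i^* X \to Q$ with $\phi \circ \eta_X = q$, that identity is by definition the statement that $Q$ lies below $i_* i^* X$ in the poset of quotient objects of $X$, so the compatibility you worried about is automatic rather than something requiring a separate check. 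That said, your kernel reformulation (that $\ker\eta_X$ is the smallest subobject of $X$ whose quotient lies in $\AC^Z$, shown via the $(j_!, j^*)$ adjunction and exactness of $j^*$ applied to $\ker q \hookrightarrow X \twoheadrightarrow Q$) is a perfectly valid dual phrasing of the same maximality claim, and either route could have been used on its own; the paper simply gives the shorter of the two and does not belabour the point.
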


\begin{proof}
By the adjunction $(i_*, i^*)$ and since $i_*$ is fully-faithful we have natural isomorphisms for $X \in \AC$, $Y \in \AC_Z$:
\[
\Hom_{\AC}(i_* i^* X, i_* Y)
\simeq
\Hom_{\AC_Z}(i^* X, Y)
\simeq
\Hom_{\AC} (X, i_* Y)
\]
sending $f$ to $f \circ \eta$ where $\eta : X \to i_* i^* X$ is the adjunction unit. In particular any morphism $X \to i_* Y$ factors through $ i_* i^* X$. Statement (i) follows. 
Statement (ii) follows by taking the opposite recollement.
\end{proof}

%

Say that a subset $\L'$ of a poset $\L$ is {\em lower} if for any $\l \in \L'$, if $\mu \leq \l$ then $\mu \in \L'$.


\begin{defn}\label{stratification}
A {\em stratification} of an abelian/triangulated category $\AC$ by a non-empty poset $\L$ consists of the following data:
\begin{enumerate}
\item[(i)]
An assignment of an abelian/triangulated category $\AC_{\L'}$ for every lower $\L' \subset \L$, and for lower subsets $\L'' \subset \L' \subset \L$ an embedding $i_{\L'', \L'*}: \AC_{\L''} \into \AC_{\L'}$.
\item[(ii)]
For each $\l \in \L$ an abelian/triangulated category $\AC_{\l}$. We call these {\em strata categories}.
\end{enumerate}
This data must satisfy the following conditions
\begin{enumerate}
\item[(S1)]
$\AC_{\emptyset} = 0$ and $\AC_{\L} =\AC$.
\item[(S2)]
For each $\l \in \L$ and lower subset $\L' \subset \L$ in which $\l \in \L'$ is maximal, the functor $i_* = i_{\L' \backslash \{ \l \},\L' *}$ fits into a recollement 
\[
\begin{tikzcd}
	{\AC_{\L' \backslash \{\l\}}} && \AC_{\L'} && {\AC_{\l}}
	\arrow["{i_{*}}", from=1-1, to=1-3]
	\arrow["{i^{*}}"', shift right=5, from=1-3, to=1-1]
	\arrow["{i^{!}}", shift left=5, from=1-3, to=1-1]
	\arrow["{j^{*}}", from=1-3, to=1-5]
	\arrow["{j_{!}}"', shift right=5, from=1-5, to=1-3]
	\arrow["{j_{*}}", shift left=5, from=1-5, to=1-3]
\end{tikzcd}
\]
\item[(S3)]
If $\L'' \subset \L'$ are lower subsets of $\L$, and $\l \in \L$ is a maximal element of both $\L''$ and $\L'$, then the following diagram of functors commutes
\[
\begin{tikzcd}
	{\AC_{\L''}} & {\AC_{\L'}} \\
	& {\AC_{\l}}
	\arrow["{i_{\L', \L'' *}}", hook, from=1-1, to=1-2]
	\arrow[from=1-1, to=2-2]
	\arrow[from=1-2, to=2-2]
\end{tikzcd}
\]
\end{enumerate}
\end{defn}


\begin{remark}
For an abelian category $\AC$, consider a poset $\L$ whose cardinality, $|\L|$, is maximal among all posets $\L'$ in which $\AC$ has a stratification by $\L '$. The value $| \L|$ is equal to the {\em stratification dimension} of $\AC$, as defined by Psaroudakis \cite[Definition 4.19]{Psa13}.
\end{remark}

We proceed with some important examples of recollements and stratifications.

\begin{ex}[Constructible sheaves with respect to a stratification]\label{recex1}
A {\em stratification} of a quasiprojective complex variety $X$ is a finite collection, $\{X_{\l} \}_{\l \in \L}$, of disjoint, smooth, connected, locally closed subvarieties, called {\em strata}, in which
$X = \bigcup_{\l \in \L} X_{\l}$ and 
for each $\l \in \L$, $\overline{X_{\l}}$ is a union of strata. In this case we equip $\L$ with the partial order
\[
\mu \leq \l  \text{ if $X_{\mu} \subset \overline{X_{\l}}$.}
\]
We use $\L$ to refer to the stratification of $X$.


For a variety $X$, let $\Loc^{ft} (X, \km)$ be the category of local systems on $X$ of finite type with coefficients in a field $\km$. Recall that, by taking monodromy, $\Loc^{ft} (X, \km)$ is equivalent to the category, $\km[\pi_1 (X_{\l})] \mod_{fg}$, of finitely generated $\km[\pi_1 (X_{\l})]$-modules (see e.g. \cite[Theorem 1.7.9]{Ach21}).

Say that a sheaf $\FC$ on $X$ is {\em constructible} with respect to a stratification, $\L$, of $X$ if $\FC |_{X_{\l}}$ is a local system of finite type for each $\l \in \L$. Write $\DC^{b}_{\L} (X, \km)$ for the full triangulated subcategory of $\DC^{b} (X, \km)$ consisting of objects $\FC$ in which $H^k (\FC)$ is constructible with respect to $\L$.

Say that a stratification, $\L$, of $X$ is {\em good} if for any $\l \in \L$ and any object $\LC \in \Loc^{ft} (X_{\l}, \km)$, we have $j_{\l *} \LC \in \DC^{b}_{\L} (X, \km)$, where $j_{\l}: X_{\l} \into X$ is the embedding, and $j_{\l *}$ is the derived pushforward. It is difficult to tell in general whether a stratification is good (see \cite[Remark 2.3.21]{Ach21} for a discussion of these dificulties). A stratification satisfying the {\em Whitney regularity conditions} \cite{Wit65} is good. In particular, if an algebraic group $G$ acts on $X$ with finitely many orbits (each connected), then the stratification of $X$ by $G$-orbits is good (see e.g. \cite[Exercise 6.5.2]{Ach21}).

Given a good stratification $\L$ on $X$, the triangulated category $\DC^{b}_{\L} (X, \km)$ has a stratification by $\L$ with strata categories 
\[
\DC_{\l} := \DC^b (\Loc^{ft}(X_{\l}, \km)) \simeq \DC^{b} (\km[\pi_1 (X_{\l})] \mod_{fg})
\]
 and Serre subcategories $\DC_{\L'} := \DC^{b}_{\L'} (\bigcup_{\l \in \L'} X_{\l})$ for each lower $\L' \subset \L$.

For a perversity function $p: \L \to \ZM$, the category ${}^p P_{\L} (X, \km)$ of perverse sheaves on $X$ with respect to the stratification $\L$ (and perversity function $p$) is the full subcategory of $\DC^{b}_{\L} (X , \km)$ consisting of complexes $\FC$ in which for any strata $h_{\l}: X_{\l} \into X$:
\begin{itemize}
\item[(i)]
 $\HC^d (h_{\l}^* \FC) = 0$ if $d > p(\l)$,
\item[(ii)]
 $\HC^d (h_{\l}^! \FC) = 0$ if $d < p(\l)$,
\end{itemize}
where $\HC^d (\FC)$ refers to the $d$-th cohomology sheaf of $\FC$. The category $\AC = {}^p P_{\L} (X, \km)$ is abelian and has a stratification by $\L$, with strata categories 
\[
\AC_{\l} = \Loc^{ft}(X_{\l}, \km) [p(\l)] \simeq \km[\pi_1 (X_{\l})] \mod_{fg}.
\]
\end{ex}

\begin{ex}[$G$-equivariant perverse sheaves]


For a complex algebraic group $G$ and quasiprojective complex $G$-variety $X$, a {\em $G$-equivariant perverse sheaf} on $X$ is, roughly speaking, a perverse sheaf on $X$ with a $G$-action compatible with the $G$-action on $X$ (see e.g. \cite[Definition 6.2.3]{Ach21} for a precise definition). The category, $P_G (X, \km)$, of $G$-equivariant perverse sheaves is the heart of a $t$-structure on the {\em $G$-equivariant derived category}, $\DC_G (X, \km)$ defined by Bernstein-Lunts \cite{BL94}. For a $G$-equivariant map of $G$-varieties $h: X \to Y$, there are equivariant versions of the (proper) pushforward and (proper) pullback functors: $h_*, h_!, h^!, h^*$. If $i \colon Z \into X$ is the inclusion of a $G$-invariant closed subvariety with open complement $j \colon U \into X$, then there is a recollement of triangulated categories
\[
\begin{tikzcd}
	{\DC_{G}^b (Z, \km)} && \DC_{G}^b (X, \km) && {\DC_{G}^b (U, \km)}
	\arrow["{i_{*}}", from=1-1, to=1-3]
	\arrow["{i^{*}}"', shift right=5, from=1-3, to=1-1]
	\arrow["{i^{!}}", shift left=5, from=1-3, to=1-1]
	\arrow["{j^{*}}", from=1-3, to=1-5]
	\arrow["{j_{!}}"', shift right=5, from=1-5, to=1-3]
	\arrow["{j_{*}}", shift left=5, from=1-5, to=1-3]
\end{tikzcd}
\]

 If $X$ is a homogeneous $G$-variety, then every $G$-equivariant perverse sheaf is a finite type local system (shifted by $\dim_{\CM} X$). Moreover, in this case,
\begin{equation}
P_G (X , \km) \simeq \km [G^{x} / (G^{x})^{\circ}] \mod_{fg},
\end{equation}
where $G^x \subset G$ is the stabilizer of a point $x \in X$, and $(G^{x})^{\circ}$ is the connected component of $G^{x}$ containing the identity element (see e.g. \cite[Proposition 6.2.13]{Ach21} for a proof of this statement).  

Suppose $G$ acts on $X$ with finitely many orbits (each connected). Let $\L$ be a set indexing the set of $G$-orbits in $X$, and write $\OC_{\l}$ for the orbit corresponding to $\l \in \L$. Consider $\L$ as a poset with the closure order: $\l \leq \mu$ if $\OC_{\l} \subset \overline{\OC_{\mu}}$. Then the category $\AC = P_{G} (X, \km)$ has a stratification with strata categories 
\[
\AC_{\l} = P_G (\OC_{\l} , \km) \simeq \km [G^{x_{\l}} / (G^{x_{\l}})^{\circ}] \mod_{fg},
\]
where $x_{\l} \in \OC_{\l}$.

\end{ex}

\begin{ex}[Modules with idempotents]\label{recex2}
For a ring $A$, let $\rMod A$ be the category of all right $A$-modules, and $\rmod A$ be the category of finitely presented right $A$-modules. Let  $e$ be an idempotent in $A$, and define the inclusion functor $i_* \colon  \rMod A / AeA \to \rMod A$. Note that $\rMod A / AeA$ is equivalent to the Serre subcategory of $\rMod A$ consisting of modules annihilated by $e$. There is a corresponding Serre quotient $j^*: \rMod A \to \rMod eAe $ defined
\[
j^* := \Hom_{A} (eA, -) \simeq - \otimes_A Ae .
\]
i.e. $j^* M = Me$ for any object $M \in \rMod A$.  These functors fit into a recollement of abelian categories
\[
\begin{tikzcd}
	{\rMod A / AeA} &&  \rMod A && {\rMod eAe}
	\arrow["{i_{*}}", from=1-1, to=1-3]
	\arrow["{i^{*}}"', shift right=5, from=1-3, to=1-1]
	\arrow["{i^{!}}", shift left=5, from=1-3, to=1-1]
	\arrow["{j^{*}}", from=1-3, to=1-5]
	\arrow["{j_{!}}"', shift right=5, from=1-5, to=1-3]
	\arrow["{j_{*}}", shift left=5, from=1-5, to=1-3]
\end{tikzcd}
\]
where for any right $A$-module $M$:
\begin{enumerate}
\item[(i)]
$i^* M$ is the largest quotient, $N$,  of $M$ in which $Ne = 0$.
\item[(ii)]
$i^! M$ is the largest subobject, $N$, of $M$ in which $Ne = 0$.
\end{enumerate}
Moreover $j_! := - \otimes_{eAe} eA$ and $j_* := \Hom_{eAe} (Ae, -)$.

If $A$ is right artinian and has enough injectives then the inclusion $i_* \colon \rmod A / AeA \to \rmod A$ fits into a recollement 
\[
\begin{tikzcd}
	{\rmod A / AeA} &&  \rmod A && {\rmod eAe}
	\arrow["{i_{*}}", from=1-1, to=1-3]
	\arrow["{i^{*}}"', shift right=5, from=1-3, to=1-1]
	\arrow["{i^{!}}", shift left=5, from=1-3, to=1-1]
	\arrow["{j^{*}}", from=1-3, to=1-5]
	\arrow["{j_{!}}"', shift right=5, from=1-5, to=1-3]
	\arrow["{j_{*}}", shift left=5, from=1-5, to=1-3]
\end{tikzcd}
\]
in which $j^*$ has left adjoint $j_! = - \otimes_{eAe} eA$ (see e.g \cite[Lemma 2.5]{Kra17a}). 

\end{ex}

\begin{ex}[Macpherson-Vilonen construction]\label{recex01} 
Macpherson and Vilonen \cite{MV86} define the category of perverse sheaves on a stratified variety via iterations of a formal method for constructing recollements of an abelian categories We recall this formal construction here.

Let $\AC_{Z}$, $\AC_{U}$ be abelian categories, $F: \AC_{U} \to \AC_{Z}$ be a right exact functor, $G: \AC_U \to \AC_Z$ be a left exact functor, and let $\e \colon F \to G$ be a natural transformation. 


Macpherson and Vilonen \cite{MV86} define a category, $\AC (\e)$, whose objects are tuples $(X_U, X_Z, \a, \b)$, where $(X_U,X_Z) \in \Obj \AC_U \times \Obj \AC_Z$ and $\a : F(X_U) \to X_Z$, $\b: X_Z \to G(X_U)$ are morphisms in $\AC_Z$ in which the following diagram commutes
\[\begin{tikzcd}
	{F(X_U)} && {G(X_U)} \\
	& {X_Z}
	\arrow["\e_{X_U}", from=1-1, to=1-3]
	\arrow["\a"', from=1-1, to=2-2]
	\arrow["\b"', from=2-2, to=1-3]
\end{tikzcd}\]
A morphism $(X_U, X_Z, \a, \b) \to (X_U', X_Z', \a', \b')$ is a pair
\[
f = (f_U : X_U \to X_U', f_Z: X_Z \to X_Z') \in \Mor \AC_U \times \Mor \AC_Z
\]
in which the following prism commutes:
\[\begin{tikzcd}
	{F(X_U)} && {G(X_U)} \\
	& {X_Z} \\
	{F(X_{U}')} && {G(X_{U}')} \\
	& {X_{Z}'} \\
	&& {}
	\arrow["\a"', from=1-1, to=2-2]
	\arrow["\b"', from=2-2, to=1-3]
	\arrow["{\e_{X_U}}", from=1-1, to=1-3]
	\arrow["{F(f_U)}"', from=1-1, to=3-1]
	\arrow["{G(f_U)}", from=1-3, to=3-3]
	\arrow["{\e_{X_{U}'}}"{pos=0.2}, from=3-1, to=3-3]
	\arrow["{f_Z}"{pos=0.2}, crossing over, from=2-2, to=4-2]
	\arrow["{\a'}"', from=3-1, to=4-2]
	\arrow["{\b'}"', from=4-2, to=3-3]
\end{tikzcd}\]
Macpherson and Vilonen show \cite[Proposition 1.1]{MV86} that the category $\AC(\e)$ is abelian.
Moreover they show that the category $\AC (\e)$ fits into a recollement
\[
\begin{tikzcd}
	{\AC_Z} && \AC(\e) && {\AC_U}
	\arrow["{i_*}", from=1-1, to=1-3]
	\arrow["{i^{*}}"', shift right=5, from=1-3, to=1-1]
	\arrow["{i^{!}}", shift left=5, from=1-3, to=1-1]
	\arrow["{j^*}", from=1-3, to=1-5]
	\arrow["{j_{!}}"', shift right=5, from=1-5, to=1-3]
	\arrow["{j_{*}}", shift left=5, from=1-5, to=1-3]
\end{tikzcd}
\]
in which
\begin{align*}
i_* (X_Z) = (0,X_Z,0,0), \qquad & j^* (X_U, X_Z, \a, \b) = X_U, \\
i^* (X_U, X_Z, \a, \b) = \cok \a, \qquad & j_! (X_U) = (X_U, F(X_U), 1_{F(X_U)}, \e_{X_U}), \\
i^! (X_U, X_Z, \a, \b) = \ker \b, \qquad & j_* (X_U) = (X_U, G(X_U), \e_{X_U}, 1_{G(X_U)}).
\end{align*}


The functor $j_{!*} : \AC_U \to \AC (\e)$ defined $j_{!*} (X_U) = (X_U, \im \e_{X_U}, \e_{X_U}, 1)$ is a special case of an intermediate-extension functor defined in Definition \ref{intextfunctor} below.
Note that every simple object in $\AC (\e)$ is either of the form $i_* L$ for a simple object $L$ in $\AC_Z$, or of the form $j_{!*} L$ for a simple object in $\AC_U$. This is a special case of Proposition \ref{recol4} below.

Note that the functor $i_* \colon \AC_U \to \AC (\e)$ has an exact retraction $i^{!*} : \AC (\e) \to \AC_Z$ defined $i^{!*} (X_U, X_Z, \a, \b) = X_Z$.
This is a special feature of recollements built using Macpherson-Vilonen's construction - the functor $i_*$ does not usually have an exact retraction in a general recollement.

Franjou and Pirashvili \cite[Theorem 8.7]{FP04} give necessary and sufficient conditions for a recollement of abelian categories to be equivalent to a recollement built using the Macpherson-Vilonen construction.


\end{ex}


\section{The intermediate-extension functor}\label{recintext}

Consider again a recollement:
\begin{equation}\label{recollement}
\begin{tikzcd}
	{\AC_Z} && \AC && {\AC_U}
	\arrow["{i_{*}}", from=1-1, to=1-3]
	\arrow["{i^{*}}"', shift right=5, from=1-3, to=1-1]
	\arrow["{i^{!}}", shift left=5, from=1-3, to=1-1]
	\arrow["{j^{*}}", from=1-3, to=1-5]
	\arrow["{j_{!}}"', shift right=5, from=1-5, to=1-3]
	\arrow["{j_{*}}", shift left=5, from=1-5, to=1-3]
\end{tikzcd}
\end{equation}

In this section we study the full subcategory, $\AC^U \into \AC$, whose objects have no subobjects or quotients in $\AC^Z := \im i_*$. A result of Kuhn \cite[Proposition 4.6(3)]{Kuh94}
(see also Proposition \ref{recol3}) is that the restricted functor $j^* \colon \AC^U \to \AC_U$ is an equivalence of categories. The quasi-inverse $j_{!*}: \AC_U \to \AC^U$ is defined as follows.

\begin{defn}[$j_{!*}: \AC_U \to \AC^U$]\label{intextfunctor}
For an object $X \in \AC_U$, let $\overline{1_X}:  j_! X \to j_* X$ be the morphism corresponding to the identity on $X$ under the isomorphism
\[
\Hom_{\AC} (j_! X, j_* X) \simeq \Hom_{\AC_U} ( X, j^* j_* X) \simeq \Hom_{\AC_U} ( X,  X) .
\]
Define
\[
j_{!*} X := \im(\overline{1_X}:  j_! X \to j_* X ) \in \AC .
\]
\end{defn}
It is easy to check that if $X \in \AC_U$ then $j_{!*} X \in \AC^U$. Indeed as $i^! j_* X = 0$, $j_* X$ has no subobjects in $\AC^Z$. In particular, as $j_{!*} X$ is a subobject of $j_* X$ it cannot have any subobjects in $\AC^Z$. Likewise as $j_{!*} X$ is a quotient of $j_{!} X$ it cannot have any quotients in $\AC^Z$. 

We call the functor 
$
j_{!*}: \AC_U \to \AC
$
an {\em intermediate-extension functor}.

\begin{remark}
Not every subquotient of an object in $\AC^U$ need be in $\AC^U$. In particular, an object in $\AC^U$ may still have simple composition factors in $\AC^Z$.
\end{remark}

The following proposition is due to Kuhn \cite[Proposition 4.6(3)]{Kuh94}. We include a proof for completeness.

\begin{prop}\label{recol3}
Let $\AC$ be an abelian category with a recollement of abelian categories as in (\ref{recollement}).
Then 
$j^* \colon \AC^U \to \AC_U$ is an equivalence of categories with quasi-inverse
$j_{!*}: \AC_U \to \AC^U$.
\end{prop}

\begin{proof}
Since $\AC_U$ is a Serre quotient of $\AC$ by $\AC_Z$, if $X \in \AC$ has no nonzero quotient objects in $\AC^Z$, and $Y \in \AC$ has no nonzero subobjects in $\AC^Z$, then
\[
\Hom_{\AC}(X, Y) \simeq \Hom_{\AC_U} (j^* X, j^* Y) .
\]
In particular, $j^* \colon \AC^U \to \AC_U$ is fully-faithful. To show that $j^*$ is essentially surjective it suffices to show that for any object $X \in \AC_U$, $j^* j_{!*} X \simeq X$. Now, as $j^*$ is exact:
\[
j^* j_{!*} X = j^* \im (j_! X \to j_* X) \simeq \im (j^* j_! X \to j^* j_* X) \simeq \im (\id: X \to X) = X.
\]
The result follows.
\end{proof}

%
%

If $\AC$ has a stratification by a finite poset $\L$, then for each $\l \in \L$, there is a functor $j^{\l}_{!*}: \AC_{\l} \to \AC$ defined by the composition 
\[\begin{tikzcd}
	{\AC_{\l}} & {\AC_{\{ \mu \in \L ~|~ \mu \leq \l \}}} & \AC
	\arrow["{j_{!*}}", from=1-1, to=1-2]
	\arrow[hook, from=1-2, to=1-3]
\end{tikzcd}\]


The following is an immediate consequence of \cite[Proposition 4.7]{Kuh94}. 

\begin{prop}\label{recol4}
Let $\AC$ be an abelian category with a stratification by a finite poset $\L$.
Every simple object $L \in \AC$ is of the form $j^{\l}_{!*} L_{\l}$, for a unique (up to isomorphism) simple object $L_{\l} \in \AC_{\l}$ and unique $\l \in \L$.
\end{prop}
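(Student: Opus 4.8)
The statement says every simple object $L \in \AC$ is $j^\l_{!*} L_\l$ for a unique simple $L_\l \in \AC_\l$ and unique $\l$. This is the standard "simple objects in perverse categories are intermediate extensions of simple local systems" result, so I would follow that template, proceeding by induction on $|\L|$.

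First, existence. Let $L$ be simple in $\AC = \AC_\L$. Pick a maximal element $\l \in \L$, giving the recollement $\AC_{\L\setminus\{\l\}} \xrightarrow{i_*} \AC_\L \xrightarrow{j^*} \AC_\l$. Either $j^* L = 0$ or $j^* L \neq 0$. If $j^* L = 0$, then by (R4')/Proposition \ref{recol1}, $L \simeq i_* i^* L$ lies in $\AC^Z = \AC_{\L\setminus\{\l\}}$; since $i_*$ is fully faithful and exact it reflects simplicity, so $L$ corresponds to a simple object of $\AC_{\L\setminus\{\l\}}$, and induction finishes this case. If $j^* L \neq 0$: since $L$ is simple it has no nonzero proper subobjects or quotients, in particular none lying in $\AC^Z$, so $i^* L = 0 = i^! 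L$, i.e. $L \in \AC^U$. By Proposition \ref{recol3}(ii), $j^*: \AC^U \to \AC_\l$ is an equivalence with quasi-inverse $j_{!*}$, so $L \simeq j_{!*} j^* L = j^\l_{!*}(j^* L)$; and $j^* L$ is simple in $\AC_\l$ because $j^*$ is exact and essentially-surjective-up-to-the-equivalence (more directly: the equivalence $\AC^U \simeq \AC_\l$ forces $j^*L$ simple since $L$ is simple in $\AC^U$ — one checks simple objects of $\AC^U$ as an abstract category, noting any subobject of $L$ in $\AC$ that happens to lie in $\AC^U$ is $0$ or $L$, but I must be slightly careful that $\AC^U$ is not closed under subobjects; the clean route is that $j^*$ is exact, kills nothing of $\AC^U$ faithfully, and $\Hom_{\AC^U}$ matches $\Hom_{\AC_\l}$, so a proper nonzero sub of $j^*L$ would pull back to a proper nonzero sub of $L$ in $\AC^U$ via $j_{!*}$, contradiction).

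Second, uniqueness of $\l$. The $\l$ attached to $L$ is characterized intrinsically: it is the unique maximal element of $\L$ for which $j^* L \neq 0$ in the recollement for that $\l$ — equivalently, running the induction, $\l = \rho$ of $L$ is well-defined because at each stage the dichotomy ($L$ in the closed part vs. $L$ restricting nonzero to the open stratum) is exhaustive and exclusive. To make this airtight I would note: if $\l, \l'$ are two maximal elements of $\L$ and $j^{*,\l}L \neq 0$, then $L \in \AC^U$ relative to $\l$ so $i^{*,\l}L = 0$; but one should check $L$ does not also live over $\l'$. The cleanest formulation: define $\rho(L)$ by descending recursion — remove maximal elements one at a time; $L$ "survives" (restricts nonzero) to exactly one stratum, and that stratum is $\l$, independent of the order of removal by condition (S3) which makes the restriction functors compatible. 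Uniqueness of $L_\l$ then follows from the equivalence $j^{\l}: \AC^U \xrightarrow{\sim} \AC_\l$: $L_\l \simeq j^\l L$ is determined by $L$.

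The main obstacle I anticipate is the bookkeeping around \emph{order-independence} of the stratification poset recursion — showing that "the stratum over which $L$ restricts nontrivially" does not depend on which maximal element we strip off first. This is exactly what axiom (S3) is for, and I'd invoke it to see that the composite Serre-quotient functors $\AC_{\{\mu \le \l\}} \to \AC_\l$ agree however we factor them, so that "$j^* L \ne 0$ for the $\l$-recollement" is a well-posed condition. A secondary subtlety, as flagged in the Remark before Proposition \ref{recol3}, is that $\AC^U$ is not closed under subquotients, so "simple in $\AC$ and lies in $\AC^U$" must be handled via the equivalence $j^* : \AC^U \simeq \AC_\l$ rather than by naive subobject arguments inside $\AC$ — but Proposition \ref{recol3} already packages exactly what is needed.
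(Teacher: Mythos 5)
Your proof is correct and takes essentially the same route as the paper: induct on $|\L|$, pick a maximal element, and split by whether $j^*L$ vanishes, handling the nonzero case via Proposition \ref{recol3}(ii). The order-independence concern you flag can be settled even without invoking (S3): for any maximal $\mu\neq\l$ one has $\mu\not\leq\l$, so $j^{\l}_{!*}$ factors through the canonical Serre subcategory $\AC_{\leq\l}\subset\AC_{\L\setminus\{\mu\}}$, and hence ``$\l=\mu$'' is equivalent to the intrinsic condition ``$L\notin\AC_{\L\setminus\{\mu\}}$'', making the peeling recursion well-posed regardless of which maximal element is removed first.
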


\begin{proof}
Suppose $\AC$ fits into a recollement as in (\ref{recollement}).
By Proposition \ref{recol3}, if $L \in \AC_U$ is a simple object, then $j_{!*} L$ is a simple object in $\AC$.
Moreover all the simple objects of $\AC$ are either of the form $i_{*} L$ for a simple object $L \in \AC_Z$, or of the form $j_{ !*} L$ for a simple object $L \in \AC_U$.
The statement follows via an induction argument on $|\L|$.
\end{proof}


The following properties of the intermediate-extension functor will be useful. Statement (i) of Proposition \ref{recol31} is due to Kuhn \cite[Proposition 4.8]{Kuh94}.

\begin{prop}\label{recol31}
Let $\AC$ be an abelian category with a recollement of abelian categories as in (\ref{recollement}).
Then 
\begin{enumerate}
\item[(i)]
The functor $j_{!*}: \AC_U \to \AC$ maps injective morphisms to injective morphisms and surjective morphisms to surjective morphisms.
\item[(ii)]
If $X \in \AC$ has no nonzero quotient objects in $\AC^Z$ then there is a canonical short exact sequence
\[
0 \to i_* i^! X \to X \to j_{!*} j^{*} X \to 0
\]
\item[(iii)]
If $X \in \AC$ has no nonzero subobjects in $\AC^Z$ then there is a canonical short exact sequence
\[
0 \to j_{!*} j^{*} X \to X \to i_* i^* X \to 0
\]
\item[(iv)]
Let
\[
0 \to X' \to X \to X'' \to 0
\]
be an exact sequence in $\AC_U$. The object $j_{!*} X$ has a filtration 
\[
0 = X_0 \subset X_1 \subset X_2 \subset X_3 = j_{!*} X
\]
 in which $X_1 \simeq j_{!*} X'$, $X_3/X_2 \simeq j_{!*} X''$, and $X_2 / X_1 \in \AC^Z$.
\end{enumerate}
\end{prop}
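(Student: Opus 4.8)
The plan is to prove the three parts in order, using the characterizations of $i^*$ and $i^!$ from Proposition \ref{recol1} together with the exact sequences (\ref{adex1})--(\ref{adex4}) and the exactness of $j^*$.

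For part (i), suppose $f: X \to Y$ is an injective morphism in $\AC_U$. Applying the right-exact functor $j_!$ gives $j_! X \to j_! Y$; applying the exact functor $j^*$ recovers $f$, so $\ker(j_! f)$ lies in $\AC^Z$. Since $j_! X$ has no nonzero subobjects in $\AC^Z$ (as $i^* j_! = 0$ forces $\ker$... — more carefully: a subobject of $j_! X$ in $\AC^Z$ would, by Proposition \ref{recol1}(ii) applied after noting $j_* $-adjointness, have to be zero because $j_!$ is fully faithful and left adjoint to $j^*$; one checks $\ker(j_! f)$ has zero image under $j^*$ and is a subobject of $j_!X$, and $j_!X$'s only $\AC^Z$-subobject is $0$ since $i^! j_! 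X$ need not vanish — so instead I work with $j_*$). Cleaner: factor $j_{!*} f$ through the commutative square coming from $\overline{1_X}$, $\overline{1_Y}$ and the naturality of the unit/counit; since $j_{!*} X \hookrightarrow j_* X$ and $j_* f: j_* X \to j_* Y$ has $\ker$ with zero $j^*$, while $j_* X$ has no nonzero subobject in $\AC^Z$ (as $i^! j_* = 0$), the composite $j_{!*} X \to j_* X \to j_* Y$ has kernel that is simultaneously a subobject of $j_* X$ and killed by $j^*$, hence zero; thus $j_{!*} f$ is injective. The surjectivity statement is dual: use that $j_{!*} Y$ is a quotient of $j_! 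Y$ and $j_! f: j_!X \to j_! Y$ is epi onto its image with cokernel in $\AC^Z$, and pass to the opposite recollement.

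For parts (ii) and (iii), I will prove (iii) and obtain (ii) by passing to the opposite recollement. So assume $i^! X = 0$. By (\ref{adex4}) with $i^! X = 0$ there is a short exact sequence $0 \to X \to j_* j^* X \to K' \to 0$ with $K' \in \AC^Z$; and by (\ref{adex1}) there is an exact sequence $j_! j^* X \to X \to i_* i^* X \to 0$. The map $X \to j_* j^* X$ is the counit (or rather the adjunction map), and by construction $\overline{1_{j^* X}}: j_! j^* X \to j_* j^* X$ factors as $j_! j^* X \to X \to j_* j^* X$ — this is essentially the definition of $\overline{1}$ via the two adjunctions and the naturality of the unit $j^* j_* j^* X \simeq j^* X$. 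Hence the image of $\overline{1_{j^*X}}$, which is $j_{!*} j^* X$, equals the image of $X \to j_* j^* X$. But $X \hookrightarrow j_* j^* X$ is injective (its kernel is $i_* i^! X = 0$), so $X \xrightarrow{\sim} \im(X \to j_* j^* X) = j_{!*} j^* X$ — wait, that would say $X \cong j_{!*}j^* X$, which is too strong. The fix: the map $X \to j_* j^* X$ in (\ref{adex4}) need not be the one whose image is $j_{!*}j^*X$; rather $j_{!*}j^* X = \im(j_! j^* X \to j_* j^* X)$ and the map $j_! j^* X \to j_* j^* X$ factors through $X$ only after $X$ is replaced by $X / (\text{its largest } \AC^Z\text{-quotient})$. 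So the correct statement: let $\pi: X \to i_* i^* X$ be the counit with kernel $X'$; then $j_! j^* X \to X$ lands in $X'$ (since $j^* i_* i^* X = 0$ and... actually $j_! j^* X \to X \to i_* i^* X$ is zero because $\Hom(j_! j^* X, i_* i^* X) \simeq \Hom(j^* X, j^* i_* i^* X) = 0$), the induced map $j_! j^* X \to X'$ is epi (as its cokernel is a quotient of $i_* i^* X$ killed by $j^*$, but also... ), and $X' \hookrightarrow X \hookrightarrow j_* j^* X$; so $X'$ is squeezed between a quotient of $j_! j^* X$ and a subobject of $j_* j^* X$, forcing $X' = j_{!*} j^* X$. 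This gives the exact sequence $0 \to j_{!*}j^*X \to X \to i_* i^* X \to 0$ of (iii), and canonicity is clear from the construction. Then (ii) follows verbatim in $\AC^{op}$.

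The main obstacle I expect is the bookkeeping in (iii) (and the symmetric (ii)): namely, verifying precisely that the canonical map $j_! j^* X \to X$ is epi onto the kernel $X'$ of the counit $X \to i_* i^* X$, and that the composite $j_! j^* X \to X' \hookrightarrow j_* j^* X$ is exactly $\overline{1_{j^* X}}$ so that its image is $j_{!*} j^* X$ on the nose. This requires carefully untwisting the definition of $\overline{1}$ through the two adjunction isomorphisms and invoking (R2)--(R4) (in particular the fact from the excerpt, following (\ref{adex4}), that $K'$ lies in $\AC^Z$, and the analogous fact for $K$). Once these identifications are made, the five lemma / image comparison is routine. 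For part (i), the subtlety is choosing to work with $j_*$ (for injectivity) versus $j_!$ (for surjectivity) so that the ambient object genuinely has no nonzero $\AC^Z$-sub/quotient, which is what kills the error term.
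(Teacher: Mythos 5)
Your proposal is correct, but it follows a genuinely different path from the paper at both stages. For part (i), the paper forms the single four--term exact sequence $0 \to K_1 \to j_{!*}X \to j_{!*}Y \to K_2 \to 0$ for an arbitrary $f$, applies the exact functor $j^*$ to see that $j^*K_1 \cong \ker f$ and $j^*K_2 \cong \cok f$, and then uses (R4') together with the fact that $j_{!*}X, j_{!*}Y \in \AC^U$ to kill whichever $K_i$ has vanishing $j^*$; this handles monos and epis in one uniform stroke. You instead prove injectivity by embedding $j_{!*}X$ into $j_*X$ and using $i^!j_* = 0$, and invoke the opposite recollement (or the dual argument via $j_!$ and $i^*j_! = 0$) for surjectivity --- correct, but two cases where one suffices. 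For parts (ii)/(iii), the paper's argument is shorter: it shows directly that the cokernel $K$ of $i_*i^!X \hookrightarrow X$ (resp.\ the kernel of $X \twoheadrightarrow i_*i^*X$) has $i^*K = i^!K = 0$, hence lies in $\AC^U$, and then simply invokes Proposition \ref{recol3}(ii) to conclude $K \cong j_{!*}j^*K \cong j_{!*}j^*X$. You instead identify the composite $j_!j^*X \to X \to j_*j^*X$ with $\overline{1_{j^*X}}$ via the triangle identities and compute its image by hand; this works, and it is the ``untwisting'' you flag as the main obstacle, but it is exactly the computation that citing Proposition \ref{recol3}(ii) lets one skip. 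The one loose end in your write-up --- the unfinished sentence claiming $j_!j^*X \twoheadrightarrow X'$ --- is in fact immediate from the exactness of (\ref{adex1}), since $X' = \ker(X \to i_*i^*X) = \im(j_!j^*X \to X)$; you correctly flag it as routine, and it is.
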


\begin{proof}
Let $f: X \to Y$ be a map in $\AC_U$ and define objects $K_1$, $K_2$ in $\AC$ by the exact sequence
\[
0 \to K_1 \to j_{!*} X \to j_{!*} Y \to K_2 \to 0 
\]
To prove statement (i) it suffices to show that if $j^* K_i = 0$ then $K_i = 0$. If $j^* K_i = 0$ then by (R4'), $K_i \in \AC^Z$. It follows in either case that $K_i = 0$ since $j_{!*}X$ and $j_{!*} Y$ are in $\AC^U$.

To prove statement (ii), let $X \in \AC$ have no nonzero quotients in $\AC^Z$ and consider the short exact sequence 
\[
0 \to i_* i^! X \to X \to K \to 0 
\]
Applying $i^!$ to the sequence we see that $i^! K = 0$ and so $K \in \AC^U$. So $ K \simeq j_{!*} j^* K$ and (by applying $j^*$ to this sequence) $j^* X \simeq j^* K$. Statement (ii) follows immediately. The proof of statement (iii) is similar.

To show Statement (iv), let $0 \to X' \to X \to X'' \to 0$ be an exact sequence in $\AC_U$. Let $X_1$ be the image of the injection $j_{!*} X' \to j_{!*} X$ and let $X_2$ be the kernel of the surjection $j_{!*} X \to j_{!*} X''$. Then $X_1 \simeq j_{!*} X'$ and $X_3/X_2 \simeq j_{!*} X''$. Moreover $X_2 / X_1 \in \AC^Z$ since
\[
j^*(X_2 / X_1) \simeq j^* X_2 / j^* X_1 \simeq \ker (X \to X'') / \im (X' \to X) = 0 .
\]
\end{proof}

Say that an abelian category is a {\em length category} if every object has a finite filtration by simple objects.

\begin{prop}\label{recol41}
If $\AC$ is an abelian category with a stratification by a finite poset, then $\AC$ is a length category if and only if all the strata categories are length categories.
\end{prop}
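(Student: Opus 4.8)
The plan is to induct on the cardinality of the poset $\L$. The base case $|\L| = 1$ is immediate: there is a single lower subset $\{\l\} = \L$ and (S2) together with (S1) gives a recollement whose kernel is $\AC_{\emptyset} = 0$, so $j^{\l*}\colon \AC \to \AC_{\l}$ is an equivalence and the claim is trivial. For the inductive step, pick a maximal element $\l \in \L$ and set $\L' := \L \setminus \{\l\}$, which is a lower subset with $|\L'| < |\L|$. By (S2) we have a recollement
\[
\begin{tikzcd}
	{\AC_{\L'}} && \AC && {\AC_{\l}}
	\arrow["{i_{*}}", from=1-1, to=1-3]
	\arrow["{j^{*}}", from=1-3, to=1-5]
\end{tikzcd}
\]
(with the full complement of six functors), and by the inductive hypothesis applied to the induced stratification of $\AC_{\L'}$ by $\L'$, the category $\AC_{\L'}$ is a length category if and only if all $\AC_\mu$, $\mu \in \L'$, are. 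So it suffices to prove: in a single recollement $0 \to \AC_Z \to \AC \to \AC_U \to 0$, the middle term $\AC$ is a length category if and only if both $\AC_Z$ and $\AC_U$ are.

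For that one-step statement I would argue as follows. ($\Rightarrow$) Suppose $\AC$ is a length category. Since $i_*\colon \AC_Z \to \AC$ is fully faithful and exact, and identifies $\AC_Z$ with the Serre subcategory $\AC^Z$, any object of $\AC_Z$ pushes forward to a finite-length object of $\AC$ whose composition factors all lie in $\AC^Z$; pulling the filtration back through the exact fully faithful $i_*$ shows $\AC_Z$ is a length category. For $\AC_U$: given $X \in \AC_U$, the object $j_{!*}X \in \AC$ has finite length, and $j^*$ is exact, so applying $j^*$ to a composition series of $j_{!*}X$ and using $j^* j_{!*} X \simeq X$ (Proposition \ref{recol3}) yields a finite filtration of $X$ by objects $j^*$ of simple objects of $\AC$; each such is either $0$ (if the simple lies in $\AC^Z$, by (R3)) or simple in $\AC_U$ (if the simple is $j_{!*}L$ for $L$ simple in $\AC_U$, again by Proposition \ref{recol3}), so $X$ has finite length. ($\Leftarrow$) Suppose $\AC_Z$ and $\AC_U$ are length categories. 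Given $X \in \AC$, the defining exact sequence (\ref{adex3}), namely $0 \to K \to j_! j^* X \to X \to i_* i^* X \to 0$ with $K \in \AC^Z$, reduces the problem to showing $j_! j^* X$ and $i_* i^* X$ have finite length (and then $X$, sitting in an extension built from subquotients of these, does too — splice the four-term sequence into two short exact sequences). Now $i^* X \in \AC_Z$ has finite length, hence so does $i_* i^* X$ since $i_*$ is exact and sends simples to simples or zero — in fact to simples, as $i_*$ is fully faithful. And $j^* X \in \AC_U$ has a composition series; applying the right-exact $j_!$ gives a filtration of $j_! j^* X$ whose subquotients are quotients of the objects $j_! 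L$ for $L$ simple in $\AC_U$, so it is enough to know each $j_! L$ has finite length. For this, use the short exact sequence from Proposition \ref{recol31}(iii) applied to $X = j_! L$: since $i^! j_! L = 0$ we get $0 \to j_{!*} j^* j_! L \to j_! L \to i_* i^* j_! L \to 0$, i.e. $0 \to j_{!*} L \to j_! L \to i_* i^* j_! L \to 0$; here $j_{!*}L$ is simple and $i^* j_! L \in \AC_Z$ has finite length, so $j_! L$ has finite length. This closes the argument.

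The main obstacle is the $(\Leftarrow)$ direction, specifically establishing that $j_! L$ has finite length for $L$ simple in $\AC_U$ — equivalently that $i^* j_! L$ has finite length in $\AC_Z$. The clean route is the one above via Proposition \ref{recol31}(iii), which isolates exactly the object $i^* j_! L \in \AC_Z$ (automatically of finite length by hypothesis); one must just be careful that $j_! L$ need not itself lie in $\AC^U$ but its \emph{quotient} by the simple socle-type piece $j_{!*}L$ does have image $i_* i^* j_! L$ after one applies $i^*$ correctly. A secondary technical point worth stating cleanly is that a four-term exact sequence $0 \to A \to B \to C \to D \to 0$ with $A, D$ (hence $B$ via $0 \to A \to B \to \im \to 0$ and $C$ via $0 \to \im \to C \to D \to 0$) of finite length forces $B$ and $C$ of finite length — routine once one records that finite length is closed under subobjects, quotients, and extensions, which holds in any abelian category. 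With those two bookkeeping points in hand the induction runs without incident.
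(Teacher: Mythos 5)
Your overall strategy — induct down to a single recollement, splice the four–term sequence (\ref{adex3}) into two short exact sequences, and reduce the problem to showing $j_!L$ has finite length for $L$ simple in $\AC_U$ — is sound, but the step where you treat $j_!L$ contains a concrete error. You assert $i^!j_!L=0$ and invoke Proposition~\ref{recol31}(iii) to produce a sequence $0\to j_{!*}L\to j_!L\to i_*i^*j_!L\to 0$. This is backwards. Axiom (R3) and adjunction give $i^*j_!=0$ and $i^!j_*=0$; it is \emph{not} true in general that $i^!j_!=0$, and indeed $i_*i^!j_!L$ is precisely the interesting ``supported on $Z$'' piece of $j_!L$. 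So $j_!L$ has no nonzero \emph{quotients} in $\AC^Z$, which is the hypothesis of Proposition~\ref{recol31}(ii), not (iii); the correct sequence is $0\to i_*i^!j_!L\to j_!L\to j_{!*}L\to 0$. Here $j_{!*}L$ is the simple \emph{top} of $j_!L$, not a subobject, and the finite-length $\AC^Z$-piece you need is $i_*i^!j_!L$, whereas the object $i_*i^*j_!L$ in your sequence is identically zero. This also invalidates the remark at the end that the issue is ``equivalently that $i^*j_!L$ has finite length'' — that object always vanishes. With the one-line swap from (iii) to (ii) the argument closes: $i^!j_!L\in\AC_Z$ has finite length by hypothesis, $j_{!*}L$ is simple, hence $j_!L$ has finite length, and the rest of your reduction via right exactness of $j_!$ is fine.

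For comparison, the paper's proof avoids touching $j_!L$: it first quotients $X$ by its maximal $\AC^Z$-subobject $i_*i^!X$ to obtain $K$ with $i^!K=0$, and only \emph{then} applies Proposition~\ref{recol31}(iii) — to $K$, for which the hypothesis actually holds — leaving the finite length of $j_{!*}j^*K$, which follows since $j_{!*}$ preserves monos and epis (Proposition~\ref{recol31}(i)). Your route through $j_!j^*X$ and $j_!L$ is a legitimate alternative once the $i^!/i^*$ mix-up is repaired, and has the mild advantage that $j_{!*}$ is only ever applied to a simple object.
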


\begin{proof}
Let $\AC$ be an abelian category fitting into a recollement of abelian categories as in (\ref{recollement}). 
It suffices to show that  $\AC$ is a length category if and only if both $\AC_Z$ and $\AC_U$ are length categories. The result follows from this statement by  induction.

Let $X$ be an object in $\AC$ and let $K$ be defined by the short exact sequence
\[
0 \to i_* i^! X \to X \to K \to 0
\]
Then $i^! K = 0$ and so applying Proposition \ref{recol31}(iii) we get the short exact sequence
\[
0 \to j_{!*} j^* K \to K \to i_* i^* K \to 0
\]
By Proposition \ref{recol31}(iv) and the exactness of $i_* : \AC_Z \to \AC$,  if every object in $\AC_Z$ and every object in $\AC_U$ has a finite filtration then so do both $j_{!*} j^* K$ and $i_* i^* K$. It follows that $K$ has a finite filtration and hence so does $X$.
The converse statement is obvious.
\end{proof}


\section{Recollements with enough projectives/injectives}\label{rechom}

In this section we study the relationship between projective covers of objects in the different categories of a recollement. More precisely, let $\AC$ be a category fitting into a recollement as in (\ref{recollement}). Proposition \ref{recol42} says that if $\AC$ has enough projectives/injectives then so does $\AC_U$. Proposition \ref{recol43} says that if $\AC$ is an abelian length category then if $\AC$ has enough projectives/injectives then so does $\AC_Z$. Proposition \ref{recol5} says that if $X \in \AC_U$ has a projective cover $P$ in $\AC_U$ then $j_! P$ is a projective cover in $\AC$ of $j_{!*} X$. 

Unfortunately it is not easy to find a projective cover in $\AC$ of an object $i_* X \in \AC^Z$, even if a projective cover of $X$ exists in $\AC_Z$. Theorem \ref{recolenoughproj} gives sufficient conditions for such a projective cover to exist. A consequence of Theorem \ref{recolenoughproj} (Corollary \ref{recolenoughprojcor}) is that a category $\AC$ with a stratification by a finite poset is equivalent to a category of finite dimensional modules of a finite dimensional algebra if and only if the same is true of the strata categories.

\subsection{Projective covers}

Recall that a surjection $\phi \colon X \to Y$ is {\em essential} if for any morphism $\a: X' \to X$, if $\phi \circ \a$ is surjective then $\a$ is surjective. Equivalently $\phi \colon X \to Y$ is essential if it's kernel, $\ker \phi$, is a superfluous submodule of $K$ i.e. for any subobject $U \subset X$, if $U + \ker \phi = X$ then $U = X$.

If $P \to X$ is an essential surjection and $P$ is projective then we call $P$ (or more accurately the morphism $P \to X$) a {\em projective cover} of $X$.  
The projective cover of $X$ (if it exists)  is unique up to isomorphism (see e.g. \cite[Corollary 3.5]{Kra15}).

The dual concept of an essential surjection is called an {\em essential extension}. If $X \to I$ is an essential extension and $I$ is injective then this extension is called the {\em injective envelope} of $X$. 

If $L \in \AC$ is a simple object and $P$ is projective then $\phi \colon P \to L$ is a projective cover if and only if the following equivalent conditions hold:
\begin{itemize}
\item[(i)]
$\ker \phi$ is the unique maximal subobject of $P$.
\item[(ii)]
The endomorphism ring of $P$ is local.
\end{itemize}
See e.g. \cite[Lemma 3.6]{Kra15}. The following additional characterisation of projective covers of simple objects will be useful.

\begin{prop}\label{KSproj}
Let $\AC$ be an abelian category. Let $P \in \AC$ be a projective object and $L \in \AC$ be a simple object. A map $P \to L$ is a projective cover if and only if for any simple object $L'$
\begin{equation}\label{dimensioncondition}
\dim_{\End_{\AC} (L')} \Hom_{\AC} (P, L') = 
\begin{cases}
1 &\text{ if $L = L'$,} \\
0 &\text{ otherwise.}
\end{cases}
\end{equation}
\end{prop}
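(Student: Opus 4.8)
The plan is to reduce the statement to standard facts about projective covers in Krull--Schmidt categories, using the fact that in such a category a projective cover of a simple object $L$ is precisely the projective indecomposable $P$ with $\mathrm{top}(P) \simeq L$. First I would recall that since $\AC$ is Krull--Schmidt, any projective $P$ decomposes as a finite direct sum $P \simeq \bigoplus_{i} P_i^{\oplus m_i}$ of indecomposable projectives, and each $P_i$ has a local endomorphism ring, hence a unique maximal subobject $\mathrm{rad}(P_i)$ with simple quotient $S_i := P_i / \mathrm{rad}(P_i)$. The key computational input is that $\Hom_{\AC}(P_i, L') \neq 0$ for a simple $L'$ forces, via projectivity and the fact that any nonzero map from $P_i$ to a simple object is surjective (its image is a nonzero subobject of $L'$), that $L' \simeq S_i$; and conversely the quotient map $P_i \to S_i$ is nonzero. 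Moreover $\Hom_{\AC}(P_i, S_i) \simeq \End_{\AC}(P_i)/\mathrm{rad}\,\End_{\AC}(P_i) \simeq \End_{\AC}(S_i)$ is a one-dimensional right $\End_{\AC}(S_i)$-module: any two nonzero maps $P_i \to S_i$ differ by an automorphism of $S_i$ precomposed, using that $\mathrm{rad}\,\End_{\AC}(P_i)$ is exactly the ideal of non-surjective endomorphisms and $\End_{\AC}(S_i)$ is a division ring.

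With these preliminaries the proof of the biconditional is short. For the forward direction, if $P \to L$ is a projective cover then $P$ is indecomposable with $\mathrm{top}(P) \simeq L$, so by the above $\dim_{\End_{\AC}(L')}\Hom_{\AC}(P,L') = 1$ if $L' \simeq L$ and $0$ otherwise, which is exactly \eqref{dimensioncondition}. For the converse, suppose $P$ is projective and satisfies \eqref{dimensioncondition}. Writing $P \simeq \bigoplus_i P_i^{\oplus m_i}$ as above, we have $\Hom_{\AC}(P, L') \simeq \bigoplus_i \Hom_{\AC}(P_i, L')^{\oplus m_i}$, and each summand contributes $\dim_{\End_{\AC}(L')}\Hom_{\AC}(P_i,L') = 1$ when $L' \simeq S_i$ and $0$ otherwise. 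Summing, $\dim_{\End_{\AC}(L')}\Hom_{\AC}(P,L') = \#\{(i,k) : 1 \le k \le m_i,\ S_i \simeq L'\}$. Condition \eqref{dimensioncondition} says this number is $1$ for $L' \simeq L$ and $0$ for every simple $L' \not\simeq L$; hence there is exactly one pair $(i,k)$ in the multiset, i.e. $P$ is indecomposable (a single $P_{i_0}$ with $m_{i_0}=1$) and $S_{i_0} \simeq L$. Therefore $P \simeq P_{i_0}$ is the projective indecomposable with top $L$, and the essential surjection $P_{i_0} \to S_{i_0} \simeq L$ (essential because its kernel $\mathrm{rad}(P_{i_0})$ is the unique maximal subobject) exhibits $P \to L$ as a projective cover; uniqueness of the map up to the action of $\Aut(L)$ follows from one-dimensionality of $\Hom_{\AC}(P,L)$, so any map $P \to L$ realizing the hypothesis is such a cover.

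The only subtlety — the step I expect to require the most care — is justifying that for an indecomposable projective $P_i$ in a Krull--Schmidt category, $\Hom_{\AC}(P_i, S_i)$ is genuinely one-dimensional \emph{as a right $\End_{\AC}(S_i)$-module} and not merely nonzero, and that $\Hom_{\AC}(P_i, L') = 0$ whenever $L'$ is simple and not isomorphic to $S_i$. Both reduce to the identification $\Hom_{\AC}(P_i, L') \simeq \Hom_{\AC}(\mathrm{top}(P_i), L')$ for simple $L'$: any map $P_i \to L'$ kills $\mathrm{rad}(P_i)$ since its restriction to the unique maximal subobject has image a proper subobject of the simple $L'$, hence zero, and it factors through $\mathrm{top}(P_i) \simeq S_i$; then $\Hom_{\AC}(S_i, L')$ is $\End_{\AC}(S_i)$ if $L' \simeq S_i$ and $0$ otherwise by Schur's lemma. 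I would cite \cite[Lemma 3.6]{Kra15} (already referenced above for the characterization of projective covers of simples) for the local-endomorphism-ring facts, so this all goes through cleanly.
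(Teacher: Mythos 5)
Your proof is correct and takes essentially the same approach as the paper: both directions reduce to the characterization of projective covers of simples via local endomorphism rings and unique maximal subobjects (\cite[Lemma 3.6]{Kra15}), computing $\Hom(P,L')$ by factoring through the top of each indecomposable summand. Your treatment of the converse is somewhat more explicit than the paper's (which dispatches it in one line by considering a hypothetical splitting $P = P_1 \oplus P_2$), but the underlying idea is identical.
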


\begin{remark}
For an object $B \in \AC$, if $\End_{\AC} (B)$ is a division ring then any $\End_{\AC} (B)$-module is free. In this case we write $\dim_{\End_{\AC} (B)} M$ for the rank of an $\End_{\AC} (B)$-module $M$.
\end{remark}

\begin{proof}[Proof of Proposition \ref{KSproj}]
Let $\phi \colon P \to L$ be a nonzero morphism from a projective object $P$ to simple object $L$. Then $\ker \phi$ is the unique maximal subobject of $P$ if and only if the following two statements hold:
\begin{enumerate}
\item
$\Hom_{\AC} (P, L') = 0$ whenever $L \neq L'$.  
\item
For any nonzero morphism $\phi': P \to L$, there is an isomorphism $f: \ker \phi \to \ker \phi'$ making the following diagram commute:
\[
\begin{tikzcd}
	{\ker \phi} & P \\
	{\ker \phi'} & P
	\arrow[from=1-1, to=1-2]
	\arrow["f"', from=1-1, to=2-1]
	\arrow["{\id_P}", from=1-2, to=2-2]
	\arrow[from=2-1, to=2-2]
\end{tikzcd}
\]
\end{enumerate}
Note that this diagram can be extended to a commutative diagram with exact rows
\begin{equation}\label{commutative}
\begin{tikzcd}
	0 & {\ker \phi} & P & L & 0 \\
	\\
	0 & {\ker \phi'} & P & L & 0
	\arrow[from=1-1, to=1-2]
	\arrow[from=1-2, to=1-3]
	\arrow["f"', from=1-2, to=3-2]
	\arrow["\phi", from=1-3, to=1-4]
	\arrow["{\id_P}"', from=1-3, to=3-3]
	\arrow[from=1-4, to=1-5]
	\arrow["{f'}", from=1-4, to=3-4]
	\arrow[from=3-1, to=3-2]
	\arrow[from=3-2, to=3-3]
	\arrow["{\phi'}"', from=3-3, to=3-4]
	\arrow[from=3-4, to=3-5]
\end{tikzcd}
\end{equation}
Moreover the existence of an an isomorphism $f: \ker \phi \to \ker \phi'$ making the left square commute in (\ref{commutative}) is equivalent to the existence of an isomorphism $f': L \to L$ making the right square of (\ref{commutative}) commute.
In particular, $(2)$ is equivalent to the statement
\begin{enumerate}
\item[$(2')$]
For any nonzero morphism $\phi': P \to L$, there is an isomorphism  $f': L \to L$ such that $\phi' = f' \circ \phi$.
\end{enumerate}
Statement $(2')$ is equivalent to the equation $\dim_{\End_{\AC} (L)} \Hom_{\AC} (P, L) =1$. The result follows.
\end{proof}

%

An abelian category has {\em enough projectives} (resp. {\em enough injectives}) if every object has a projective cover (resp. injective envelope).

\begin{prop}\label{FinProj}
Let $\AC$ be an abelian length category. Then $\AC$ has enough projectives if and only if every simple object in $\AC$ has a projective cover.
\end{prop}

\begin{proof}
Let $\AC$ be an abelian length category in which every simple object has a projective cover.
Let $X$ be an object in $\AC$ and write $\Rad X$ for the intersection of all maximal subobjects of $X$. 

The quotient object $X / \Rad X$ is semisimple. Indeed, since $\AC$ is a length category, $\Rad X$ is the intersection of finitely many maximal subobjects $M_i \subset X$.  In particular there is an injection
\[
X / \Rad X \into \bigoplus_i X/ M_i
\]
given by the diagonal map.
Since $X / \Rad X$ embeds into a semisimple object, it is itself semisimple.


Since $X / \Rad X$ is semisimple, it has a projective cover $P$.  
Since $P$ is projective the essential surjection $P \to X / \Rad X$ factors through $X$, defining a morphism $P \to X$ that fits into the commutative diagram
\begin{equation*}
\begin{tikzcd}
	X & {X/\Rad X} \\
	P
	\arrow[two heads, from=1-1, to=1-2]
	\arrow[from=2-1, to=1-1]
	\arrow[two heads, from=2-1, to=1-2]
\end{tikzcd}
\end{equation*}
We show that this map $P \to X$ defines a projective cover of $X$.

It is straightforward to check that $\Rad X$ is a superfluous subobject of $X$. Hence the map $X \to X / \Rad X$ is an essential surjection. It follows that the map $P \to X$ is surjective. One can check directly that $P \to X$ is essential using the fact that the maps  $P \to X /\Rad X$ and $X \to X / \Rad X$ are essential (see e.g. \cite[Lemma 3.1]{Kra15}). The result follows.

\end{proof}

\subsection{Ext-finiteness}

For $k \in \NM$, say that an abelian category $\AC$ is {\em $\Ext^k$-finite} if for any simple objects
 $A, B$ in $\AC$, 
\[
\dim_{\End_{\AC} (B)} \Ext^{k}_{\AC} (A, B) < \infty .
\]

We remark that if $\AC$ is a $\km$-linear category, for some field $\km$, then\footnote{If $\{e_i\}_{i \in I}$ is a $\km$-basis of $\End_{\AC} (B) $ and $\{f_j\}_{j \in J}$ is a $\End_{\AC} (B)$-basis of  $\Ext^{k}_{\AC} (A,B)$, then $\{ e_i\circ   f_j\}_{(i,j) \in I \times J}$ is a $\km$-basis of  $\Ext^{k}_{\AC} (A,B) $.}

\[
\dim_{\km} \Ext^{k}_{\AC} (A,B) =  \dim_{\km} \End_{\AC} (B) \times \dim_{\End_{\AC} (B)} \Ext^{k}_{\AC} (A,B).
\]

In particular, $\AC$ is $\Ext^k$-finite whenever $\dim_{\km} \Ext^{k}_{\AC} (A,B) < \infty$ for every simple object $A, B$. The converse is true if the endomorphism ring of every simple object has finite $\km$-dimension (e.g. if $\km$ is algebraically closed).

The $\Ext^1$-finiteness property allows us to construct new objects using the {\em universal extension construction} as defined in \cite{Rin91} - we recall this construction in Section \ref{MainResults}. This construction is particularly useful for constructing projective generators (see e.g. \cite[Theorem 3.2.1]{BGS96}, \cite[Theorem 4.6]{CW21}). 
We require $\Ext^1$-finiteness for this purpose in Theorem \ref{recolenoughproj}.

In this section we give two results about $\Ext$-finiteness (Propositions \ref{finExt} and \ref{enoughprojandinj}) that will be needed in the discussion following Theorem \ref{recolenoughproj}.

\begin{prop}\label{finExt} 
Any abelian length category with enough projectives is $\Ext^k$-finite for every $k \in \NM$.
\end{prop}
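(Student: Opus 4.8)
The plan is to reduce the statement to a claim about projective resolutions of simple objects. Let $\AC$ be an abelian length category with enough projectives, and fix simple objects $A,B$. Since $\AC$ has enough projectives, we may compute $\Ext^k_{\AC}(A,B)$ from a projective resolution $\cdots \to P_1 \to P_0 \to A \to 0$, so $\Ext^k_{\AC}(A,B)$ is a subquotient of $\Hom_{\AC}(P_k, B)$. As an $\End_{\AC}(B)$-module it therefore suffices to bound $\dim_{\End_{\AC}(B)} \Hom_{\AC}(P_k,B)$. The key point is that $P_k$ can be chosen to be a finite direct sum of projective covers of simple objects: this is where the length hypothesis enters.

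First I would establish the single-object input: for any object $X$ of finite length and any simple $B$, $\dim_{\End_{\AC}(B)}\Hom_{\AC}(X,B) < \infty$. This follows by induction on the length of $X$ using a composition series $0 = X_0 \subset X_1 \subset \cdots \subset X_n = X$ with simple quotients $S_i$: applying $\Hom_{\AC}(-,B)$ to each short exact sequence $0 \to X_{i-1} \to X_i \to S_i \to 0$ gives a left-exact sequence showing $\dim_{\End_{\AC}(B)}\Hom_{\AC}(X_i,B) \le \dim_{\End_{\AC}(B)}\Hom_{\AC}(X_{i-1},B) + \dim_{\End_{\AC}(B)}\Hom_{\AC}(S_i,B)$, and $\Hom_{\AC}(S_i,B)$ is either $0$ or $\End_{\AC}(B)$ (Schur) by simplicity, hence of dimension $\le 1$. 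So $\dim_{\End_{\AC}(B)}\Hom_{\AC}(X,B) \le n$.

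Next I would construct a projective resolution of $A$ by finite-length projectives. Since $\AC$ has enough projectives and is a length (hence Krull--Schmidt) category, take $P_0 \to A$ a projective cover; as $A$ is simple, $P_0$ is the projective cover of a simple object, so it is indecomposable, but more importantly I claim each $P_k$ appearing in a minimal resolution has finite length. Let $\Omega A := \ker(P_0 \to A)$; I want $\Omega A$ to have finite length. This is the one genuinely delicate point: a priori $P_0$ itself need not have finite length (an abelian length category need not have finite-length projectives). The fix is to build the resolution one step at a time and observe that at each stage it is enough to cover the relevant syzygy, \emph{but} to make the $\Ext$-bound work I only need $\Hom_{\AC}(P_k, B)$ to be finite-dimensional over $\End_{\AC}(B)$, and for that I can instead argue directly: since $A$ is simple and of finite length, and projective covers in a Krull--Schmidt category have local endomorphism rings, one shows $\Hom_{\AC}(P_0, B) = 0$ for all but finitely many simple $B$ and has bounded dimension — but this still requires controlling $\Hom_{\AC}(P_0,B)$ without finite length of $P_0$.

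So the cleaner route, which I would actually take, is: by Proposition~\ref{KSproj}, $\dim_{\End_{\AC}(B)}\Hom_{\AC}(P_0, B)$ equals $1$ if $B \cong A$ and $0$ otherwise, since $P_0 \to A$ is a projective cover of a simple object — this gives $\dim_{\End_{\AC}(B)}\Hom_{\AC}(P_0,B) \le 1$ with no finiteness assumption on $P_0$ at all. Now cover $\Omega A$: since $\AC$ has enough projectives, pick a projective cover $P_1 \to \Omega A$, which (again by Krull--Schmidt) decomposes as a finite direct sum of projective covers of the simple composition factors appearing in $\Omega A/\mathrm{rad}$ — but I need $\Omega A$ to have finitely many such factors, i.e. $\Omega A/\mathrm{rad}\,\Omega A$ to have finite length. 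Here I invoke that $A$ has finite length together with the fact that in a length category a projective cover $P_1 \to \Omega A$ induces a surjection on heads and the head of $\Omega A$ is a quotient of $\Omega A$; if $\Omega A$ fails to have finite head one must argue more carefully. The main obstacle, then, is precisely this: showing the syzygies (or at least their heads) have finite length so that each $P_k$ is a finite direct sum of indecomposable projectives, after which $\dim_{\End_{\AC}(B)}\Hom_{\AC}(P_k,B)$ is a finite sum of terms each $\le 1$ by Proposition~\ref{KSproj}, hence finite, and $\dim_{\End_{\AC}(B)}\Ext^k_{\AC}(A,B) \le \dim_{\End_{\AC}(B)}\Hom_{\AC}(P_k,B) < \infty$. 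I expect the resolution of this obstacle to use the hypothesis that $\AC$ is a length category applied not to projectives but to $A$ itself: one induces, for each $k$, a finite-length object $Y_k$ (a quotient of $A$'s resolution) surjecting onto the head of the syzygy, bounding the number of indecomposable summands of $P_k$ by $\mathrm{length}$-type data of $A$.
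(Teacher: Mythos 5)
Your proposal contains a misdiagnosed gap that derails the argument. You assert that ``a priori $P_0$ itself need not have finite length (an abelian length category need not have finite-length projectives),'' and you spend the rest of the proof trying, unsuccessfully, to route around this. But under the paper's definition a length category is one in which \emph{every} object has a finite filtration by simple objects, and ``enough projectives'' means every object has a projective cover \emph{in $\AC$}. So $P_0$, being an object of $\AC$, automatically has finite length, as does $\Omega A = \ker(P_0 \to A)$ and every syzygy thereafter. The ``obstacle'' you identify at the end --- bounding the length of the heads of the syzygies --- dissolves entirely: the syzygies are objects of a length category. You never actually close this gap; the proof ends with ``I expect the resolution of this obstacle to \dots'', so as written the argument is incomplete.

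Once the misconception is removed, the rest of your machinery (minimal projective resolutions, Proposition~\ref{KSproj}, decomposing $P_k$ into indecomposables) is unnecessary. The paper's proof is a much shorter dimension shift: take \emph{any} projective presentation $0 \to K \to P \to X \to 0$ of a finite-length object $X$; since $P$ and hence $K$ have finite length, the long exact sequence gives a surjection $\Ext^{k-1}_{\AC}(K,Y) \twoheadrightarrow \Ext^{k}_{\AC}(X,Y)$, and one inducts on $k$ with base case the finiteness of $\Hom_{\AC}(X,Y)$ for finite-length $X$. Your ``single-object input'' lemma is exactly this base case and is proved correctly; the rest should just be the inductive dimension shift, not a minimal-resolution argument.
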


\begin{proof}
Let $\AC$ be an abelian length category.
Let $X$ be an object in $\AC$, and let $Y$ be a simple object in $\AC$. 
Consider a projective presentation of X:
\[
0 \to K \to P \to X \to 0
\] 
Since $\Ext^{k}_{\AC} (P, Y) = 0$ there is a $\End_{\AC} (Y)$-equivariant surjection of $\Ext^{k-1}_{\AC} (K, Y)$ onto $\Ext^{k}_{\AC} (X, Y)$ for each $k > 0$. Since $\AC$ is a length category, 
\[
\dim_{\End_{\AC} (Y)} \Hom_{\AC} (X, Y) < \infty.
\]
The result follows by induction.
\end{proof}


Say that a $\km$-linear abelian category is {\em finite over $\km$} if $\AC$ is a length category with enough projectives, finitely many simple objects, and finite dimensional $\Hom$-spaces. It is well-known that $\AC$ is finite over $\km$ if and only if there is a finite-dimensional $\km$-algebra $A$ in which $\AC$ is equivalent to  the category, $A \mod$, of modules that are finite dimensional as $\km$-vector spaces. Indeed, if $\{ P_{\l} \}_{\l \in \L}$ are the projective indecomposables in $\AC$ (up to isomorphism), then $A = \End_{\AC} (\bigoplus_{\l \in \L} P_{\l})^{op}$ and $\Hom_{\AC} (\bigoplus_{\l \in \L} P_{\l}, -): \AC \to A \mod$ is an equivalence of categories. Note that there is a contravariant equivalence $\Hom_{\km} (- , \km): A \mod \to A^{op} \mod$. In particular, any finite abelian category has enough injectives.

\begin{prop}\label{enoughprojandinj}
Let $\km$ be a field and let $\AC$ be a $\km$-linear abelian category with a stratification by a finite poset in which every strata category is a finite abelian category. Then $\AC$ is $\Ext^1$-finite.
\end{prop}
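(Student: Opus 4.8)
The plan is to reduce, as in Propositions \ref{recol41} and \ref{recol42}, to a single recollement and then argue by induction on $|\L|$. So suppose $\AC$ fits into a recollement as in (\ref{recollement}), where $\AC_U$ is one of the strata categories (hence finite over $\km$, in particular $\Ext^1$-finite and a length category) and $\AC_Z$ carries the induced stratification by the lower subset $\L \setminus \{\l\}$ with all strata categories finite; by the inductive hypothesis $\AC_Z$ is $\Ext^1$-finite, and both $\AC_U$ and $\AC_Z$ are length categories. By Proposition \ref{recol41}, $\AC$ is then a length category, so by Proposition \ref{recol4} its (finitely many) simple objects are exactly the $j_{!*}L_U$ for $L_U$ simple in $\AC_U$ together with the $i_*L_Z$ for $L_Z$ simple in $\AC_Z$. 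It therefore suffices to bound $\dim_{\End_\AC(B)}\Ext^1_\AC(A,B)$ for $A,B$ ranging over these four combinations of types.

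First I would treat the two cases where $B = i_*L_Z$. Using the adjunction $(i_*,i^!)$ on the derived level — or, more elementarily, the long exact sequence obtained by applying $\Hom_\AC(A,-)$ to a short exact sequence $0 \to B \to I \to B' \to 0$ where $I$ is chosen so that $\Ext^1$ into it vanishes or is controlled — one compares $\Ext^1_\AC(-,i_*L_Z)$ with $\Ext$-groups inside $\AC_Z$. More precisely: since $i_*$ is exact and fully faithful, $\Ext^1_\AC(i_*A_Z, i_*L_Z)$ receives a map from $\Ext^1_{\AC_Z}(A_Z,L_Z)$ with kernel/cokernel controlled by $\Hom$'s, which are finite-dimensional over $\End(L_Z)$ because $\AC_Z$ is a length category and finite; and $\Ext^1_\AC(j_{!*}L_U, i_*L_Z)$ can be attacked via the canonical sequence $0 \to i_*i^!(j_{!*}L_U) \to \dots$, but in fact $j_{!*}L_U$ has no subobjects in $\AC^Z$ so $i^!$ of it is zero, and one instead uses the surjection $j_!L_U \to j_{!*}L_U$ together with $\Hom_\AC(j_!L_U, i_*L_Z) = \Hom_{\AC_U}(L_U, i^*... )$-type identities (here $i^*j_! = 0$), reducing everything again to finite-dimensional $\Hom$-spaces and to $\Ext^1$'s inside $\AC_Z$.

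Second, for $B = j_{!*}L_U$: apply $\Hom_\AC(A,-)$ to the defining short exact sequences of the intermediate extension, or compare with $j_*L_U$. Since $j_*L_U$ has no subobjects in $\AC^Z$, $\Ext^1_\AC(A, j_{!*}L_U)$ maps into $\Ext^1_\AC(A, j_*L_U)$ with kernel a subquotient of $\Hom_\AC(A, i_*i^*(\text{cokernel}))$, hence finite over $\End(L_U)$ by length-finiteness of $\AC_Z$; and $\Ext^1_\AC(A, j_*L_U)$ is computed by adjunction $(j^*, j_*)$ — at least at the level needed, $\Hom_\AC(A, j_*L_U) \simeq \Hom_{\AC_U}(j^*A, L_U)$ and there is a five-term exact sequence relating $\Ext^1_\AC(A, j_*L_U)$ to $\Ext^1_{\AC_U}(j^*A, L_U)$ plus a term involving $i_*$ and $R^1$ of $j_*$, all of which are finite-dimensional by the finiteness of $\AC_U$ and $\AC_Z$. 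Running all four cases gives the bound for the single recollement, and the induction closes.

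The main obstacle I anticipate is the bookkeeping around the functor $j_*$ (equivalently $i_*$, $j_!$ in the opposite category): $j_*$ need not be exact, so $\Ext^1_\AC(A, j_*L_U)$ is not simply $\Ext^1_{\AC_U}(j^*A, L_U)$, and one must show the correction terms — living in the image of $i_*$ and hence in a length category with finitely many simples — contribute only finite dimension over the relevant endomorphism division ring. This is where Propositions \ref{recol31} and the exact sequences (\ref{adex3})--(\ref{adex4}), which pin down the "error" objects $K, K'$ as lying in $\AC^Z$, do the essential work; modulo that, each step is a routine diagram chase combined with the observation that in a finite length category every $\Hom$-space between objects of finite length is finite-dimensional over the endomorphism ring of a simple.
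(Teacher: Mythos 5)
Your argument and the paper's both reduce, by induction on $|\L|$, to a single recollement; the difference is in how the single-recollement case is handled. You compute $\Ext^1_\AC(A,B)$ case-by-case, moving from $j_{!*}L_U$ to $j_!L_U$ or $j_*L_U$ and then comparing with $\Ext^1$ in $\AC_U$ via adjunction and a ``five-term exact sequence.'' This route is viable in principle: since $j^*$ is exact, $j_*$ preserves injectives and $j_!$ preserves projectives, so there are Grothendieck spectral sequences whose low-degree exact sequences give injections $\Ext^1_\AC(A, j_*X) \hookrightarrow \Ext^1_{\AC_U}(j^*A, X)$ and $\Ext^1_\AC(j_!X, B) \hookrightarrow \Ext^1_{\AC_U}(X, j^*B)$ (the latter even kills $\Ext^1_\AC(j_!L_U, i_*L_Z)$ outright). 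But you only gesture at these; the ``correction terms'' you anticipate are not controlled merely by Proposition \ref{recol31} and the sequences (\ref{adex3})--(\ref{adex4}), but precisely by the adjoint-functor input above, which your sketch never actually establishes. In particular, knowing $i^*j_!=0$ or $j^*i_*=0$ constrains $\Hom$, not $\Ext^1$, and you never say why $\Ext^1_\AC(j_!L_U, i_*L_Z)$ is under control.

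The paper sidesteps all of this with a sharper move: take the projective cover $P$ and injective envelope $I$ of $L_U$ in $\AC_U$, and use Proposition \ref{recol5} (and its dual) to conclude $j_!P$ is projective and $j_*I$ is injective in $\AC$. The presentations $0 \to K \to j_!P \to j_{!*}L_U \to 0$ and $0 \to j_{!*}L_U \to j_*I \to K' \to 0$ then immediately yield surjections $\Hom_\AC(K, Y) \twoheadrightarrow \Ext^1_\AC(j_{!*}L_U, Y)$ and $\Hom_\AC(Y, K') \twoheadrightarrow \Ext^1_\AC(Y, j_{!*}L_U)$ for every simple $Y$, and the $\Hom$-spaces are finite since $\AC$ is a length category with finite-dimensional $\Hom$. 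The only remaining case is both arguments in $\AC^Z$, which follows from the Serre-subcategory property and induction. So: your overall plan is sound, but the spectral-sequence steps need to be filled in; replacing $j_!L_U$ and $j_*L_U$ by $j_!P$ and $j_*I$ and invoking Proposition \ref{recol5} dissolves the difficulty and shrinks the proof to a few lines.
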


\begin{proof}
By the assumptions on the strata categories, $\AC$ has finite dimensional $\Hom$-spaces.
Suppose $\AC$ has a recollement with objects and morphisms as in (\ref{recollement}).
Suppose $\AC_U$ and $\AC_Z$ have enough projectives, and $\AC_U$ has enough injectives.
By Proposition \ref{finExt}, both $\AC_Z$ and $\AC_U$ are $\Ext^1$-finite. We show that $\AC$ is $\Ext^1$-finite. It suffices to show that $\dim_{\km} \Ext_{\AC}^1 (X, Y) < \infty$ for all simple objects $X, Y$.

Since $\AC^Z$ is a Serre subcategory of $\AC$, this is true whenever $X$ and $Y$ are both in $\AC^Z$. 
Let $L \in \AC_U$ be simple and let $j_{!*} L$ have projective and injective presentations:
\begin{align*}
& 0 \to K \to j_! P \to j_{!*} L \to 0 \\
& 0 \to  j_{!*} L \to j_* I \to K' \to 0
\end{align*}
The projective presentation implies that $\Hom_{\AC} (K, Y)$ surjects onto $\Ext_{\AC}^1 (j_{!*} L, Y)$. The injective presentation implies that  $\Hom_{\AC} (Y, K')$ surjects onto $\Ext_{\AC}^1 (Y, j_{!*} L)$. It follows that $\AC$ is $\Ext^1$-finite. The result follows by an induction argument.
\end{proof}

\subsection{Main results}\label{MainResults}

This section includes our results about recollements and projective covers.

\begin{prop}\label{recol42}
Let $\AC$ be an abelian category with a recollement of abelian categories as in (\ref{recollement}).
Then
\begin{enumerate}
\item[(i)]
If $\phi: X \to Y$ is an essential surjection in $\AC$ and $i^* Y = 0$ then $i^*X = 0$ and $j^* (\phi): j^* X \to j^* Y$ is an essential surjection.
\item[(ii)]
If $P \in \AC$ is projective and $i^* P = 0$ then $j^* P \in \AC_U$ is projective. In particular if $\phi: P \to X$ is a projective cover in $\AC$ and $i^* X = 0$ then $j^* (\phi): j^* P \to j^* X$ is a projective cover in $\AC_U$.
\end{enumerate}
In particular, if $\AC$ has enough projectives then so does $\AC_U$.
\end{prop}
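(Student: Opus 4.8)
The plan is to prove (i) first; part (ii) and the final assertion then follow from it together with a direct check that $j^*$ preserves projectivity, applied to the object $j_! Y$. For the first half of (i), I would argue from the exact sequence $(\ref{adex1})$, which identifies $i_* i^* X$ with $X / \im\mu_X$, where $\mu : j_! j^* \to \id$ is the counit. Naturality of $\mu$ gives $\phi \circ \mu_X = \mu_Y \circ (j_! j^* \phi)$ for the given essential surjection $\phi : X \to Y$; here $\mu_Y$ is surjective because $i^* Y = 0$, and $j_! j^* \phi$ is surjective because $j^*$ is exact and $j_!$ is right exact (being a left adjoint). Hence $\phi$ maps $\im\mu_X$ onto $Y$, so $\im\mu_X + \ker\phi = X$, and essentiality of $\phi$ forces $\im\mu_X = X$; thus $i_* i^* X = 0$ and $i^* X = 0$ since $i_*$ is faithful.

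The second half of (i) is where I expect the real work. That $j^* \phi$ is surjective is immediate from exactness of $j^*$. For essentiality, suppose $W \subseteq j^* X$ with $W + \ker(j^*\phi) = j^* X$; I would set $\tilde W := \theta_X^{-1}(j_* W) \subseteq X$, the pullback of $j_* W \hookrightarrow j_* j^* X$ along the unit $\theta_X : X \to j_* j^* X$ of $j^* \dashv j_*$. Since $j^*$ is exact and $j^* j_* \cong \id$ (so the triangle identity makes $j^* \theta_X$ an isomorphism), one checks $j^* \tilde W = W$. Applying the exact functor $j^*$ to $X/(\tilde W + \ker\phi)$ — using $\ker(j^*\phi) = j^*\ker\phi$ — yields $(j^*X)/(W + \ker j^*\phi) = 0$, so $X/(\tilde W + \ker\phi)$ lies in $\AC^Z$ by (R4'); but it is a quotient of $X$, and $i^* X = 0$ means (Proposition $\ref{recol1}$(i)) that $X$ has no nonzero quotient in $\AC^Z$, so $\tilde W + \ker\phi = X$. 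Essentiality of $\phi$ gives $\tilde W = X$ and hence $W = j^*\tilde W = j^* X$. The subtlety — and the reason the hypothesis $i^* X = 0$ is indispensable here — is that $j^*$ is a Serre quotient functor and may send a proper subobject onto the whole object; the argument above is designed precisely to rule this out, and I regard this as the main obstacle.

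For (ii), I would first show that $j^* P$ is projective whenever $P$ is projective and $i^* P = 0$. Given a surjection $A \twoheadrightarrow B$ in $\AC_U$, apply $j_*$; the map $j_* A \to j_* B$ need not be surjective, but its cokernel $C$ satisfies $j^* C = 0$ (apply the exact functor $j^*$ and use $j^* j_* \cong \id$ together with surjectivity of $A \to B$), hence $C \in \AC^Z$ by (R4'), say $C \cong i_* C'$, so $\Hom_\AC(P, C) \cong \Hom_{\AC_Z}(i^* P, C') = 0$. Projectivity of $P$ then makes $\Hom_\AC(P, j_* A) \to \Hom_\AC(P, j_* B)$ surjective, which via the adjunction $j^* \dashv j_*$ is exactly the statement that $\Hom_{\AC_U}(j^* P, A) \to \Hom_{\AC_U}(j^* P, B)$ is surjective; so $j^* P$ is projective. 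Now if $P \to X$ is a projective cover in $\AC$ with $i^* X = 0$, part (i) gives $i^* P = 0$ and that $j^* P \to j^* X$ is an essential surjection, and combined with projectivity of $j^* P$ this is a projective cover in $\AC_U$.

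Finally, for the last assertion, if $\AC$ has enough projectives then for any $Y \in \AC_U$ I would take a projective cover $P \to j_! Y$ in $\AC$; since $i^*(j_! Y) = 0$ by (R3), the preceding paragraph shows that the induced map $j^* P \to j^* j_! Y \cong Y$ is a projective cover in $\AC_U$, so $\AC_U$ has enough projectives.
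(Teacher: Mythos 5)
Your proof is correct, and it diverges from the paper's argument at two points. For the essentiality claim in (i), the paper works with the ``morphism'' formulation of essential surjection: given $\a : X' \to j^* X$ with $j^*\phi \circ \a$ surjective, it forms $\epsilon_X \circ j_! \a : j_! X' \to X$, uses essentiality of $\phi$ to conclude this is surjective, and then applies $j^*$ to recover surjectivity of $\a$. You instead use the ``subobject'' formulation, lifting $W \subseteq j^* X$ to $\tilde W = \theta_X^{-1}(j_* W) \subseteq X$ via the unit of $j^* \dashv j_*$ and invoking (R4') together with Proposition \ref{recol1}(i) to force $\tilde W + \ker\phi = X$. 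Both arguments are sound; yours is longer but makes explicit exactly why the Serre-quotient nature of $j^*$ is not an obstruction (because the hypothesis $i^*X = 0$ forbids the offending quotients in $\AC^Z$), while the paper's route sidesteps that issue entirely by never passing to subobjects. For (ii), the paper argues via the left adjoint: since $P$ is projective and $i^*P = 0$, the sequence \eqref{adex3} splits and gives $j_! j^* P \simeq P$, whence $\Hom_{\AC_U}(j^*P, -) \simeq \Hom_\AC(P, j_!(-))$ is simultaneously left exact (as a $\Hom$) and right exact (as a composite with $j_!$), hence exact. You instead go through the right adjoint $j_*$: you note that the cokernel $C$ of $j_*A \to j_*B$ has $j^*C = 0$, so $C \in \AC^Z$ and $\Hom_\AC(P, C) = 0$ by $i^*P = 0$, and then transport across the $j^* \dashv j_*$ adjunction. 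Again both are correct; the paper's version has the extra bonus of identifying $P$ with $j_! j^* P$, a structural fact that is reusable elsewhere (and in fact underlies Proposition \ref{recol5}), whereas yours is a clean direct verification that needs no such identification. Your handling of the last assertion, via a projective cover of $j_! Y$ and (R3), matches the intended argument.
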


\begin{proof}
Let $\phi \colon X \to Y$ be an essential surjection in $\AC$ and suppose $i^* Y = 0$. To show that $i^* X = 0$ it suffices to show that the canonical map $\epsilon_X : j_! j^* X \to X$ is surjective. This follows from the following commutative diagram since $\phi$ is essential.
\[
\begin{tikzcd}
	X && Y \\
	{j_! j^* X} && {j_! j^* Y}
	\arrow["{j_! j^* \phi}"', two heads, from=2-1, to=2-3]
	\arrow["{\epsilon_X}", from=2-1, to=1-1]
	\arrow["{\epsilon_Y}"', two heads, from=2-3, to=1-3]
	\arrow["\phi", two heads, from=1-1, to=1-3]
\end{tikzcd}
\]
Let $\a : X' \to j^* X$ be a morphism in $\AC_U$, in which $j^*( \phi) \circ \a: X' \to j^* Y$ is surjective. Then $\epsilon_Y \circ j_! j^*( \phi) \circ j_! \a: j_! X' \to Y$ is surjective and so (since $\phi$ is essential) $\epsilon_X \circ j_! \a : j_! X' \to X$ is surjective. Hence $j^* (\epsilon_X \circ j_! \a) \simeq \a :X' \to j^* X$ is surjective. This proves (i).


For Statement (ii) it remains to show that if $P \in \AC$ is projective and $i^*P = 0$ then the functor $\Hom_{A_U} (j^* P, -) \simeq \Hom_{\AC} (P, j_*(-))$ is exact. We proceed as in the proof of \cite[Lemma 4.3]{CW21}. Given a short exact sequence $0 \to X \to Y \to Z \to 0$ in $\AC_U$, there is a short exact sequence
\[
0 \to j_* X \to j_* Y \to j_* Z \to i_* C \to 0
\]
for some $C \in \AC_Z$. Applying $\Hom_{\AC}(P,-)$ to this sequence gives the exact sequence
\[
0 \to \Hom_{\AC} (P, j_* X) \to \Hom_{\AC}(P, j_* Y) \to \Hom_{\AC} (P, j_* Z ) \to 0
\]
since $\Hom_{\AC} (P, i_* C ) \simeq \Hom_{\AC_Z} (i^* P, C) =0$.
\end{proof}

\begin{prop}\label{recol43}
Suppose $\AC$ is an abelian length category with a recollement of abelian categories as in (\ref{recollement}).
If $P \to L$ is a projective cover in $\AC$ of a simple object $L \in \AC^Z$, then $i^* P \to i^* L$ is a projective cover in $\AC_Z$. In particular, if $\AC$ has enough projectives then so does $\AC_Z$.
\end{prop}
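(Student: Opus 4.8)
The plan is to deduce everything from two observations: $i^*$ preserves projectives, and $\AC^Z\simeq\AC_Z$ inherits the Krull--Schmidt property; then the numerical criterion of Proposition \ref{KSproj} does the rest.

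First I would note that $i_*$, having both a left adjoint $i^*$ and a right adjoint $i^!$, is exact, so its left adjoint $i^*$ preserves projectives: for a projective $P\in\AC$ the functor $\Hom_{\AC_Z}(i^*P,-)\simeq\Hom_\AC(P,i_*(-))$ is a composite of exact functors, hence exact, so $i^*P$ is projective in $\AC_Z$. Moreover $i^*$ is right exact, so the surjection $P\twoheadrightarrow L$ gives a surjection $i^*P\twoheadrightarrow i^*L$. Since $L\in\AC^Z$ we have $L=i_*M$ with $M:=i^*L\in\AC_Z$, and by (R2) this $M$ is simple (as $i_*$ is fully faithful and exact it reflects simplicity of $L$). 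Thus $i^*P\twoheadrightarrow i^*L$ is a surjection from a projective object onto a simple object.

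Next I would record that $\AC^Z$, being a Serre subcategory of $\AC$, is closed under direct summands; hence a decomposition in $\AC$ of an object of $\AC^Z$ into indecomposables with local endomorphism rings already takes place in $\AC^Z$, and endomorphism rings computed in the full subcategory $\AC^Z$ coincide with those in $\AC$. Therefore $\AC^Z\simeq\AC_Z$ is a Krull--Schmidt category, so Proposition \ref{KSproj} is available in $\AC_Z$.

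Now I would apply Proposition \ref{KSproj} in $\AC_Z$ to the surjection $i^*P\to i^*L$. For a simple object $M'\in\AC_Z$ put $L':=i_*M'\in\AC^Z$. The adjunction $(i^*,i_*)$ together with full faithfulness of $i_*$ yields an isomorphism $\Hom_{\AC_Z}(i^*P,M')\simeq\Hom_\AC(P,L')$ which intertwines the right $\End_{\AC_Z}(M')$-module structure with the right $\End_\AC(L')$-module structure along the ring isomorphism $\End_{\AC_Z}(M')\to\End_\AC(L')$, $h\mapsto i_* h$. Since $P\to L$ is a projective cover in the Krull--Schmidt category $\AC$, Proposition \ref{KSproj} (applied in $\AC$) gives $\dim_{\End_\AC(L')}\Hom_\AC(P,L')=1$ when $L'\simeq L$, equivalently $M'\simeq M$, and $0$ otherwise; transporting this across the above isomorphisms gives the same numerics for $\dim_{\End_{\AC_Z}(M')}\Hom_{\AC_Z}(i^*P,M')$. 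Proposition \ref{KSproj}, applied this time in $\AC_Z$, then shows that $i^*P\to i^*L$ is a projective cover. For the final assertion: if $\AC$ has enough projectives, then every simple object of $\AC_Z$ is of the form $i^*(i_*M)\simeq M$ where $i_*M$ is a simple object of $\AC^Z$ with a projective cover $P\to i_*M$ in $\AC$, and by what we just proved $i^*P\to M$ is a projective cover in $\AC_Z$; since $\AC_Z$ is Krull--Schmidt, having projective covers of all simple objects is equivalent to having enough projectives.

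The only mildly fiddly point is checking that the adjunction isomorphism $\Hom_{\AC_Z}(i^*P,M')\simeq\Hom_\AC(P,i_*M')$ is compatible with the two module structures along $h\mapsto i_*h$; this is a direct unwinding of the unit $\eta:\id\to i_*i^*$ and functoriality of $i_*$, and everything else follows formally from exactness of $i_*$ and the Krull--Schmidt hypothesis.
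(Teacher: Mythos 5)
Your proof is correct and follows essentially the same route as the paper's: note that $i^*$, being left adjoint to the exact functor $i_*$, preserves projectives, and then verify via the adjunction isomorphism $\Hom_{\AC_Z}(i^*P, i^*L')\simeq\Hom_\AC(P, i_*i^*L')\simeq\Hom_\AC(P,L')$ that $i^*P\to i^*L$ satisfies the numerical projective-cover criterion of Proposition~\ref{KSproj}. The paper states this in two lines and says ``the result follows''; you have simply made explicit the steps it leaves implicit (surjectivity of $i^*P\to i^*L$, that $\AC_Z$ inherits the Krull--Schmidt property from $\AC$, and that the adjunction isomorphism respects the relevant endomorphism-ring module structures), all of which are accurate and routine.
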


\begin{proof}
Since $i^*$ is the left adjoint of an exact functor it preserves projective objects. For any projective object $P \in \AC$ and simple object $L' \in \AC^Z$, $\Hom_{\AC_Z} (i^* P,  i^* L') \simeq
\Hom_{\AC} (P, i_* i^* L') \simeq  \Hom_{\AC} (P,  L')$. The first statement in the proposition follows from Proposition \ref{KSproj}. The second statement follows from Proposition \ref{FinProj}.
\end{proof}


\begin{prop}\label{recol5}
Let $\AC$ be an abelian category with a recollement of abelian categories as in (\ref{recollement}).
If $X \to Y$ is an essential surjection in $\AC_U$ then the composition $j_! X \to j_{!*} X \to j_{!*} Y$ is an essential surjection in $\AC$. In particular:
\begin{enumerate}
\item[(i)]
The canonical surjection $j_! X \to j_{!*}X$ is essential.
\item[(ii)]
If $P \to X$ is a projective cover of $X$ in $\AC_U$ then $j_! P \to j_! X \to j_{!*} X$ is a projective cover of  $j_{!*} X$ in $\AC$.
\end{enumerate}

%

\end{prop}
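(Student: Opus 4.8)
The plan is to reduce everything to one observation, call it $(\star)$: \emph{a morphism $\alpha\colon W\to j_!X$ in $\AC$ is surjective if and only if $j^*\alpha$ is surjective.} The ``only if'' direction holds because $j^*$ is exact. For ``if'', note that $\cok\alpha$ is a quotient of $j_!X$ and, as $j^*$ is exact, $j^*\cok\alpha=\cok(j^*\alpha)=0$; hence $\cok\alpha\in\AC^Z$ by (R4'). But $i^*j_!=0$ by (R3), so Proposition~\ref{recol1}(i) tells us the largest quotient of $j_!X$ lying in $\AC^Z$ is $i_*i^*j_!X=0$; therefore $\cok\alpha=0$. In other words, the fact that $j_!X$ has no nonzero quotient supported on $\AC^Z$ lets $j^*$ detect surjectivity of maps into it.

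Next I would check that the composite $\psi\colon j_!X\twoheadrightarrow j_{!*}X\to j_{!*}Y$ is surjective: the first map is the defining epimorphism onto $j_{!*}X=\im(\overline{1_X})$, and the second is $j_{!*}(f)$, which is surjective by Proposition~\ref{recol31}(i) since $f$ is. I also want $j^*\psi$ to be an essential surjection in $\AC_U$. Since the canonical map $j_!\Rightarrow j_{!*}$ is a natural transformation, $\psi$ also equals $j_!X\xrightarrow{\,j_!(f)\,}j_!Y\twoheadrightarrow j_{!*}Y$. Applying the exact functor $j^*$: on $j_!(f)$ it gives $f$ (via the adjunction isomorphism $j^*j_!\cong\id$ from (R2)), and on $j_!Y\twoheadrightarrow j_{!*}Y$ it gives an isomorphism, because in $\AC_U$ the composite $j^*j_!Y\to j^*j_{!*}Y\hookrightarrow j^*j_*Y$ is identified with $\id_Y$ (this is the computation in the proof of Proposition~\ref{recol3}(ii)). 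Hence $j^*\psi=f$ up to these canonical isomorphisms, so it is essential in $\AC_U$ by hypothesis.

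Now essentiality of $\psi$ is immediate: if $\alpha\colon W\to j_!X$ has $\psi\alpha$ surjective, then $j^*(\psi\alpha)=(j^*\psi)(j^*\alpha)$ is surjective since $j^*$ is exact, so $j^*\alpha$ is surjective because $j^*\psi$ is essential, and then $\alpha$ is surjective by $(\star)$. This gives the main assertion. Part (i) is the case $X=Y$, $f=\id_X$. For part (ii): if $P\to X$ is a projective cover in $\AC_U$ then $P$ is projective, hence $j_!P$ is projective since $j_!$ is a left adjoint of the exact functor $j^*$; and $j_!P\to j_!X\to j_{!*}X$ is exactly the composite $\psi$ attached to the essential surjection $P\to X$ (using naturality of $j_!\Rightarrow j_{!*}$ again), so it is an essential surjection onto $j_{!*}X$ and therefore a projective cover.

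The only thing needing care is the bookkeeping behind $(\star)$ and the identification $j^*\psi=f$; conceptually there is no obstacle, but it is worth emphasizing that $(\star)$ genuinely uses $i^*j_!=0$ — without the absence of $\AC^Z$-quotients on $j_!X$, surjectivity could fail to be detected by $j^*$ and the argument would break.
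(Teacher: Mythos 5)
Your proof is correct and follows essentially the same strategy as the paper: show that $j^*$ of the composite $\psi$ is (identified with) the given essential surjection $\phi$, deduce that $j^*\alpha$ is surjective, and then conclude that $\alpha$ itself is surjective. The only variation is in that final step — the paper applies $j_!$ to $j^*\alpha$ and uses the counit triangle $j_!j^*X' \to X' \to j_!X$ (whose validity rests on $\epsilon_{j_!X}$ being an isomorphism by (R2)), whereas you package the same underlying fact ($j_!X$ has no nonzero quotient in $\AC^Z$) into the observation $(\star)$ via Proposition~\ref{recol1}(i). Both are valid and of comparable length; your $(\star)$ is perhaps slightly more reusable as a stand-alone lemma, but the content is the same.
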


\begin{proof}
Let $\phi \colon X \to Y$ be an essential surjection in $\AC_U$. The map $\phi': j_! X \to j_{!*} X \to j_{!*} Y$ is surjective by Proposition \ref{recol31}(i). Let $\a: X' \to j_! X$ be a morphism in which $\phi' \circ \a$ is surjective. Now, $j^* (\phi') = \phi \colon X \to Y$ and since $j^*$ is exact, $j^*(\phi' \circ \a) = \phi \circ j^* (\a) : j^* X' \to Y$ is surjective. Since $\phi$ is essential it follows that $j^* (\a): j^* X' \to X$ is surjective in $\AC_U$ and so $j_! j^* (\a): j_! j^* X' \to j_! X $ is surjective in $\AC$. The surjectivity of $\a$ follows from the commutative triangle
\[
\begin{tikzcd}
	{j_! j^* X'} && {j_! X} \\
	{X'}
	\arrow["{j_! j^* (\a)}", two heads, from=1-1, to=1-3]
	\arrow[from=1-1, to=2-1]
	\arrow["\a"', from=2-1, to=1-3]
\end{tikzcd}
\]
in which the downward arrow is the adjunction counit.
\end{proof}

The following result holds by an almost identical argument.

\begin{prop}
The intermediate-extension functor preserves essential surjections and essential extensions. 
\end{prop}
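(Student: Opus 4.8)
The plan is to mimic the proof of Proposition \ref{recol5}, replacing the domain $j_! X$ appearing there by $j_{!*} X$ and compensating for the loss of the adjunction $(j_!, j^*)$ by invoking condition (R4') together with the defining property of $\AC^U$. I would prove the two assertions separately, the second being formally dual to the first via the opposite recollement.

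For essential surjections I would start from an essential surjection $\phi \colon X \to Y$ in $\AC_U$. By Proposition \ref{recol31}(i) the map $j_{!*}\phi \colon j_{!*} X \to j_{!*} Y$ is surjective, so it remains to check essentiality: given $\a \colon Z \to j_{!*} X$ in $\AC$ with $j_{!*}\phi \circ \a$ surjective, I want $\a$ surjective. Applying the exact functor $j^*$ and using $j^* j_{!*} \simeq \id$ (Proposition \ref{recol3}) shows $\phi \circ j^* \a$ is surjective, so essentiality of $\phi$ forces $j^* \a \colon j^* Z \to X$ to be surjective. Then $j^*(\im \a) = X = j^* j_{!*} X$, so the quotient $j_{!*} X / \im \a$ is annihilated by $j^*$ and hence lies in $\AC^Z$ by (R4'); since $j_{!*} X \in \AC^U$ has no nonzero quotient in $\AC^Z$, we get $\im \a = j_{!*} X$.

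For essential extensions I would run the dual argument: given an essential extension $\iota \colon X \to Y$ in $\AC_U$, Proposition \ref{recol31}(i) makes $j_{!*}\iota$ a monomorphism, and for $\b \colon j_{!*} Y \to Z$ with $\b \circ j_{!*}\iota$ monic, exactness of $j^*$ and essentiality of $\iota$ give that $j^* \b$ is monic, so $j^*(\ker \b) = 0$, whence $\ker \b \in \AC^Z$ by (R4'); but $j_{!*} Y \in \AC^U$ has no nonzero subobject in $\AC^Z$, so $\ker \b = 0$. Alternatively this case follows from the first one applied to the opposite recollement.

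The only place where this is not a verbatim copy of the proof of Proposition \ref{recol5} is the concluding step: there the source $j_! X$ is adjoint-friendly and one finishes via the counit $j_! j^* Z \to Z$, whereas here one must instead observe that the image (resp.\ kernel) of the relevant map becomes everything (resp.\ zero) after applying $j^*$, and then use that $j_{!*} X$ has no proper quotients (resp.\ nonzero subobjects) in $\AC^Z$. I expect this substitution, minor as it is, to be the only point needing care.
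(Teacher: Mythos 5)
Your proof is correct and matches what the paper has in mind: the paper gives no written proof, only the remark that it follows by ``an almost identical argument'' to that of Proposition~\ref{recol5}, and your argument is precisely that adaptation. The only deviation you flag---replacing the adjunction-counit trick at the end with the observation that $j^{*}(\cok\alpha)=0$ (resp.\ $j^{*}(\ker\beta)=0$) forces $\cok\alpha$ (resp.\ $\ker\beta$) into $\AC^{Z}$ by (R4'), and hence to vanish since $j_{!*}X\in\AC^{U}$---is exactly the minor modification the situation demands, so this is essentially the same approach.
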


The following is the main result of this section. 

\begin{thm}\label{recolenoughproj}
Let $\AC$ be an abelian length category with finitely many simple objects, and a recollement of abelian categories as in (\ref{recollement}). If $\AC$ is $\Ext^1$-finite then $\AC$ has enough  projectives if and only if both $\AC_U$ and $\AC_Z$ have enough projectives. Dually if $\AC^{op}$ is $\Ext^1$-finite then $\AC$ has enough  injectives if and only if both $\AC_U$ and $\AC_Z$ have enough injectives.
\end{thm}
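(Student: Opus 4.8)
The plan is to prove the ``if'' direction (the ``only if'' direction is already handled by Propositions \ref{recol42} and \ref{recol43}), and the injective statement will follow by passing to the opposite recollement. So assume $\AC_U$ and $\AC_Z$ have enough projectives; since $\AC$ is an abelian length category with finitely many simple objects it is Krull--Schmidt, so by Proposition \ref{recol4}--style reasoning it suffices to construct a projective cover in $\AC$ of each simple object. The simple objects of $\AC$ are of two kinds: those of the form $j_{!*} L$ for $L \in \AC_U$ simple, and those of the form $i_* L$ for $L \in \AC_Z$ simple. For the first kind, Proposition \ref{recol5}(ii) already gives that $j_! P \to j_{!*} L$ is a projective cover in $\AC$, where $P \to L$ is a projective cover in $\AC_U$. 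So the whole problem reduces to constructing, for each simple $L \in \AC_Z$, a projective cover in $\AC$ of $i_* L$.

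First I would fix a simple $L \in \AC_Z$ with projective cover $Q \to L$ in $\AC_Z$, and consider $i_* Q$. This is generally not projective in $\AC$: the obstruction is that $\Hom_\AC(i_*Q,-)$ need not kill $\Ext^1$ against objects not in $\AC^Z$, equivalently that $\Ext^1_\AC(i_*Q, j_{!*}L')$ may be nonzero for simple $L' \in \AC_U$. The idea is to kill these obstructions one at a time by forming a universal extension. Concretely, for each simple $L'$ with $\Ext^1_\AC(i_* Q, j_{!*}L') \neq 0$, let $n_{L'} := \dim_{\End_\AC(j_{!*}L')} \Ext^1_\AC(i_*Q, j_{!*}L')$, which is finite by the $\Ext^1$-finiteness hypothesis, and there are finitely many such $L'$ because there are finitely many simples; then take the extension
\[
0 \to i_* Q \to P \to \bigoplus_{L'} (j_{!*}L')^{\oplus n_{L'}} \to 0
\]
classified by a basis of each $\Ext^1$-group (a ``universal extension''). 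Using the long exact sequence in $\Ext$ together with projectivity of $j_! P'$-type objects over $\AC^U$ and the fact that $j_{!*}L'$ has a projective resolution starting $j_!P'_{L'} \to j_{!*}L'$, I would check that $\Ext^1_\AC(P, L'') = 0$ for every simple $L''$ of the second kind ($i_*$-type), while the construction was arranged precisely so that $\Ext^1_\AC(P, j_{!*}L')$ vanishes; since $\AC$ is a length category this forces $P$ to be projective. Then one verifies $P \to i_* Q \to i_* L$ is essential (e.g. $\Hom_\AC(P, L'') $ has the right dimensions in the sense of Proposition \ref{KSproj}, or directly that $P$ is indecomposable with top $i_*L$), so $P \to i_* L$ is a projective cover in $\AC$.

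The main obstacle is the middle step: showing the universal extension $P$ is actually projective, i.e. that $\Ext^1_\AC(P, L'') = 0$ for \emph{all} simples $L''$, not just the ones of $j_{!*}$-type that we explicitly built in. For $L''$ of $i_*$-type this needs the long exact sequence $\Ext^1_\AC(i_*Q, L'') \leftarrow \Ext^1_\AC(P,L'') \leftarrow \Ext^1_\AC(\bigoplus (j_{!*}L')^{n_{L'}}, L'')$: the left term vanishes because $i_*Q = i_*(\text{projective})$ and $i_*$ is exact with exact left adjoint $i^*$, so $\Ext^1_\AC(i_*Q, i_*(-)) \cong \Ext^1_{\AC_Z}(Q,-) = 0$ — but one must be careful that $\Ext^1$ computed in the Serre subcategory $\AC^Z$ agrees with $\Ext^1$ computed in $\AC$, which holds in degree $1$ for Serre subcategories; and the right term vanishes because $\Hom_\AC(j_{!*}L', L'')$-to-$\Ext^1$ connecting maps from the resolution $j_!P'_{L'} \twoheadrightarrow j_{!*}L'$ show $\Ext^1_\AC(j_{!*}L', i_*(-)) $ is controlled by $\Hom$ into the kernel, which lies in $\AC^Z$, and a dimension count using $\Ext^1$-finiteness and essentiality pins it down. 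The other delicate point is checking the connecting map $\Ext^1_\AC(P, j_{!*}L') \leftarrow \Hom_\AC(i_*Q, j_{!*}L')$ business works out so that $P$ genuinely has no self-extensions against $j_{!*}L'$ either; this is exactly the standard ``universal extension makes Ext vanish'' computation but one has to confirm the relevant higher $\Ext$ terms that appear (from the projective presentation of $j_{!*}L'$) do not obstruct it, which is where finiteness of the poset / finitely many simples and the length hypothesis are used to terminate an induction.
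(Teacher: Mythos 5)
Your reduction to constructing a projective cover of a simple in $\AC^Z$ is correct (Propositions \ref{recol42}, \ref{recol43}, \ref{recol5} handle everything else), and the impulse to use a universal extension is right, but there are two concrete problems with the construction as you sketch it.

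First, the extension you write down points the wrong way. The classes of $\Ext^1_{\AC}(i_*Q, j_{!*}L')$ classify extensions with $j_{!*}L'$ as \emph{kernel} and $i_*Q$ as \emph{quotient}, so the universal extension built from a basis of these $\Ext^1$-groups is $0 \to \bigoplus_{L'} (j_{!*}L')^{\oplus n_{L'}} \to P \to i_*Q \to 0$, not the sequence you wrote with $i_*Q$ as subobject. As written, $P$ does not even surject onto $i_*L$, so ``$P \to i_*Q \to i_*L$'' is not a morphism. This may just be a typo, but it needs fixing.

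Second, and more seriously, even with the direction corrected you have no working argument that the resulting $P$ is projective, and in fact with your choice of kernel it generally is not. From $0 \to R \to P \to i_*Q \to 0$ with $R = \bigoplus (j_{!*}L')^{\oplus n_{L'}}$, the long exact sequence for a simple $Y \in \AC^U$ gives $\Hom(R,Y) \to \Ext^1(i_*Q,Y) \to \Ext^1(P,Y) \to \Ext^1(R,Y)$. The universal extension makes the first map surjective, so $\Ext^1(P,Y)$ injects into $\Ext^1(R,Y)$; but $R$ is a sum of \emph{simples} $j_{!*}L'$, and there is no reason $\Ext^1_{\AC}(j_{!*}L', Y)$ vanishes. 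The sentence ``since $\AC$ is a length category this forces $P$ to be projective'' is exactly the gap: it is not true, and there is no termination or iteration argument in place to make it true. The paper's proof handles this by a genuinely different device: it forms the fixed projective $P := \bigoplus_k P_{m+k}^{\oplus d_k}$ (projective covers of the $\AC^U$-simples, not the simples themselves), takes $Q$ to be a \emph{maximal length quotient} of this $P$ admitting an extension $0 \to Q \to \EC \to \bar P_t \to 0$ with $\Hom(Q,L) \cong \Ext^1(\bar P_t, L)$ for all simple $L \in \AC^U$, and sets $P_t := \EC$. Projectivity of $P_t$ is then proved by contradiction: a nontrivial class in $\Ext^1(P_t, L)$ in the kernel of $\Ext^1(Q,L) \to \Ext^2(\bar P_t, L)$ would produce a strictly longer quotient $Q'$ of $P$ with the same $\Hom$-property, contradicting maximality. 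Both ingredients — using quotients of a fixed \emph{projective} $P$ rather than sums of simples, and the maximality argument — are absent from your sketch, and without some replacement for them the projectivity step does not go through.
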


Before giving the proof of this theorem we will explain one important ingredient: the {\em universal extension} as described by Ringel \cite{Rin91}.

Let $A, B$ be objects in an abelian category $\AC$ in which $\End_{\AC} (B)$ is a division ring and $d := \dim_{\End_{\AC} (B)} \Ext_{\AC}^1(A,B) < \infty$. We form the {\em universal extension} $\EC \in \Ext^{1}_{\AC} (A, B^{\oplus d})$ by the following process. First let $E_1, \ldots, E_d \in \Ext_{\AC}^1 (A, B)$ be an $\End_{\AC} (B)$-basis. The diagonal map $\D: A \to A^{\oplus d}$ induces a map $\Ext_{\AC}^1 (A^{\oplus d}, B^{\oplus d}) \to \Ext^{1}_{\AC} (A, B^{\oplus d})$.
Let $\EC$ be the image of $E_1 \oplus \cdots \oplus E_d$ under this map. 
Note that the $\End_{\AC} (B)$-equivariant map $\Hom_{\AC} (B^{\oplus d}, B) \to \Ext_{\AC}^1 (A,B)$ induced by the short exact sequence
\[
0 \to B^{\oplus d} \to \EC \to A \to 0
\]
is surjective (this is easy to check on the basis of $\Ext_{\AC}^1 (A,B)$).

When $B_1, \ldots, B_n$ are objects in $\AC$ in which each ring $\End_{\AC} (B_i)$ is a division ring and $d_i := \dim_{\End_{\AC} (B_i)} \Ext_{\AC}^1(A,B_i) < \infty$, we also talk about a {\em universal extension} 
$\EC \in \Ext_{\AC}^1 (A, \bigoplus_i B_{i}^{\oplus d_i})$ constructed in the following way. Let $\EC_i \in \Ext^{1}_{\AC} (A, B_{i}^{\oplus d_i})$ be a universal extension (as defined in the previous paragraph) and define $\EC$ to be the image of $\EC_1 \oplus \cdots \oplus  \EC_n$ under the map  $\Ext_{\AC}^1 (\bigoplus_i A, \bigoplus_i B^{\oplus d_i}) \to \Ext^{1}_{\AC} (A, \bigoplus_i B^{\oplus d_i})$ induced by the diagonal map $\D: A \to A^{\oplus d}$. Then $\EC$ has the property that the short exact sequence 
\[
0 \to \bigoplus_i B^{\oplus d_i} \to \EC \to A \to 0
\]
induces a surjection $\Hom_{\AC} (\bigoplus_i B_{i}^{\oplus d_i}, B_j) \to \Ext_{\AC}^{1} (A, B_j)$ for each $j = 1, \ldots, n$.

Dually, if $\End_{\AC} (A)^{op}$ is a division ring and $\dim_{\End_{\AC} (A)^{op}} \Ext^1 (A,B) < \infty$ then one can form a universal extension $\EC' \in \Ext_{\AC}^1 (A^{\otimes d},B)$ using the codiagonal map $\delta: B^{\oplus d} \to B$ instead of the diagonal map.

\begin{proof}[Proof of Theorem \ref{recolenoughproj}]
Suppose that $\AC_U$ and $\AC_Z$ have enough projectives.

Suppose $\AC^Z$ has simple objects $L_1, \ldots, L_m$ with projective covers $\bar{P}_1, \ldots, \bar{P}_m$ in $\AC^Z$. Suppose $\AC^U$ has simple objects $L_{m+1}, \ldots, L_{m+n}$. By Proposition \ref{recol5} every simple object in $\AC^U$ has a projective cover in $\AC$.
It suffices to construct a projective cover in $\AC$ of each simple object in $\AC^Z$. This amounts to finding, for each $1 \leq t \leq m$, a projective object, $P_t$, whose unique simple quotient is $L_t$.

Fix $1 \leq t \leq m$.

{\em Step 1. Define $P_t$.}
For simple object $L_{m+k} \in \AC^U$, let $P_{m+k}$ denote its projective cover in $\AC$.
Define $Q$ to be a maximal length quotient of 
\[
P := \bigoplus_{k = 1}^{n} P_{m + k}^{\oplus \dim_{\End_{\AC} (L_{m+k})} \Ext^{1}_{\AC} (\bar{P}_t, L_{m + k})}
\]
in which there is an extension 
\begin{equation}\label{Qdef}
0 \to Q \to \EC  \to \bar{P}_t \to 0
\end{equation}
inducing an isomorphism $\Hom_{\AC} (Q, L) \simeq \Ext^{1}_{\AC} (\bar{P}_t , L)$ for each $L \in \AC^U$. 
That is $0 = \Hom_{\AC} (\bar{P}_t, L) \simeq \Hom_{\AC} (\EC, L)$ and $\Ext_{\AC}^1 (\EC, L)$ injects into $\Ext_{\AC}^1 (Q, L)$.

Let $P_t$ be any choice of such $\EC$.

{\em Step 2. $P_t$ is well-defined.}
To show that the maximal quotient $Q$ exists, we just need to find one quotient of $P$ with the required property. Then since $\AC$ is a length category  there exists a maximal length quotient with the required property.
Since $\AC$ has finite $\Ext^1$-spaces, we can let
\[
R = \bigoplus_{k =1}^{n} L_{m + k}^{\oplus \dim_{\End_{\AC} (L_{m+k})} \Ext^{1}_{\AC} (\bar{P}_t, L_{m + k})}
\]
and form the universal extension
\[
0 \to R \to \EC  \to \bar{P}_t \to 0
\]

Since this is a universal extension it induces a surjection $\Hom_{\AC} (R, L_{m+k}) \to \Ext^{1}_{\AC} (\bar{P}_t , L_{m+k})$ for each $L_{m+k} \in \AC^U$. This map is an isomorphism since 
\[
\dim_{\End_{\AC} (L_{m+k})} \Hom_{\AC} (R, L_{m+k}) =\dim_{\End_{\AC} (L_{m+k})} \Ext^{1}_{\AC} (\bar{P}_t, L_{m+k}).
\]

{\em Step 3. $P_t $ has unique simple quotient $L_t$.}
By definition of $P_t$ and by the $(i^*, i_*)$-adjunction, for any simple module $L \in \AC$:
\[
\Hom_{\AC} (P_t, L) \simeq 
\Hom_{\AC} (\overline{P}_t, L)
\simeq
\Hom_{\AC} (\overline{P}_t, i_* i^* L)
\]
and so the only simple quotient of $P_t$ is $L_t$ with multiplicity one.

{\em Step 4. $P_t$ is projective.} We show that $\Ext^{1}_{\AC} (P_{t}, L) = 0$ for each simple $L \in \AC$. 
For any simple $L \in \AC$ there is an exact sequence
\begin{equation}\label{step4}
0 \to \Ext^{1}_{\AC} (P_{t}, L) \to \Ext^{1}_{\AC} (Q, L) \to \Ext^{2}_{\AC} (\bar{P}_t, L)
\end{equation}
Indeed if $L \in \AC^U$ then this holds because $\Hom_{\AC} (Q, L) \simeq \Ext^{1}_{\AC} (\bar{P}_t , L)$. If $L \in \AC^Z$ then (\ref{step4}) holds since $\Ext_{\AC}^{1} (\bar{P}_t, L) = 0$.

To show that $P_t$ is projective it suffices to show that the third map in (\ref{step4}) is injective for any $L \in \AC$. Suppose for contradiction that there is a nontrivial extension
\begin{equation}\label{Q'}
0 \to L \to Q' \to Q \to 0
\end{equation}
in the kernel of this map. Then there is an object $\EC \in \AC$ fitting into the following diagram
\begin{equation}\label{exactgrid}
\begin{tikzcd}
	& 0 & 0 & 0 \\
	0 & L & {Q'} & {Q} & 0 \\
	0 & L & \EC & {P_{t}} & 0 \\
	0 & 0 & {\bar{P}_t} & {\bar{P}_t} & 0 \\
	& 0 & 0 & 0
	\arrow[from=2-1, to=2-2]
	\arrow[from=2-2, to=2-3]
	\arrow[from=2-3, to=2-4]
	\arrow[from=2-4, to=2-5]
	\arrow[from=1-4, to=2-4]
	\arrow[from=2-4, to=3-4]
	\arrow[from=3-4, to=4-4]
	\arrow[from=4-4, to=5-4]
	\arrow[from=4-3, to=5-3]
	\arrow[from=3-3, to=4-3]
	\arrow[from=2-3, to=3-3]
	\arrow[from=1-3, to=2-3]
	\arrow[from=3-1, to=3-2]
	\arrow[from=3-2, to=3-3]
	\arrow[from=3-3, to=3-4]
	\arrow[from=3-4, to=3-5]
	\arrow[from=1-2, to=2-2]
	\arrow[from=2-2, to=3-2]
	\arrow[from=3-2, to=4-2]
	\arrow[from=4-2, to=4-3]
	\arrow[from=4-3, to=4-4]
	\arrow[from=4-4, to=4-5]
	\arrow[from=4-1, to=4-2]
	\arrow[from=4-2, to=5-2]
\end{tikzcd}
\end{equation}
in which each row and column is exact.
For each $L' \in \AC^U$ the sequence (\ref{Q'}) induces an exact sequence
\[
0 \to \Hom_{\AC}(Q, L') \to \Hom_{\AC}(Q', L') \to \Hom_{\AC}(L, L') 
\]
Of course, $\Hom_{\AC}(L, L') = 0$ if $L \neq L'$. If $L = L'$ the third map must be zero. Indeed if $f: L \to L$ factors through the inclusion $\iota: L \to Q'$ via a map $g: Q' \to L$, then $f^{-1} \circ g : Q' \to L$ is a retraction of $\iota$. This contradicts the assumption that (\ref{Q'}) does not split.
Hence, for any $L' \in \AC^U$, there is an isomorphism
\begin{equation}\label{maximality}
\Hom_{\AC}(Q', L') \simeq  \Hom_{\AC}(Q, L') \simeq \Ext^{1}_{\AC} (\bar{P}_t, L') .
\end{equation}
Since $P$ is projective the quotient $P \to Q$ fits into the diagram
\[
\begin{tikzcd}
	& P \\
	{Q'} & {Q}
	\arrow[two heads, from=1-2, to=2-2]
	\arrow["\varphi"', from=1-2, to=2-1]
	\arrow[two heads, from=2-1, to=2-2]
\end{tikzcd}
\]
Now $\varphi$ cannot be surjective, as, by (\ref{maximality}), this would contradict the maximality of $Q$. Thus the image of $\varphi$ is isomorphic to $Q$ and so the sequence (\ref{Q'}) splits. This is a contradiction. Hence $P_t$ is projective.
\end{proof}

\begin{cor}\label{recolenoughprojcor1}
Let $\AC$ be an abelian category with a stratification in which every strata category is a length category, and has finitely many simple objects. 

If $\AC$ is $\Ext^1$-finite (respectively $\AC^{op}$ is $\Ext^1$-finite) then $\AC$ has enough projectives (respectively injectives) if and only if every strata category has enough projectives (respectively injectives).
\end{cor}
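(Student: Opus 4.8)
The plan is to reduce to the two-term case, Theorem~\ref{recolenoughproj}, by induction on $|\L|$ (throughout one takes $\L$ finite, which is the setting in which that theorem, and hence the present statement, applies). It suffices to treat the assertion about projectives: the one about injectives follows by applying the projective case to $\AC^{op}$, which carries a stratification by the \emph{same} poset $\L$ with strata categories $\AC_\l^{op}$. Indeed, each recollement occurring in condition (S2) has an opposite recollement (as recalled after Definition~\ref{recdef}) of exactly the shape demanded by Definition~\ref{stratification}, each $\AC_\l^{op}$ is again a length category with finitely many simple objects, and ``$\AC$ has enough injectives'' means ``$\AC^{op}$ has enough projectives'', similarly stratum by stratum, while ``$\AC^{op}$ is $\Ext^1$-finite'' is the given hypothesis.

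The base case $|\L|=1$, say $\L=\{\l\}$, is immediate: condition (S2) for the maximal element $\l$ produces a recollement $0=\AC_{\emptyset}\to\AC=\AC_{\L}\to\AC_{\l}$, and a Serre quotient with zero kernel is an equivalence, so $\AC\simeq\AC_{\l}$. For the inductive step, fix a maximal $\l\in\L$ and put $\L':=\L\setminus\{\l\}$, a non-empty lower subset; by (S2) there is a recollement $\AC_{\L'}\to\AC\to\AC_{\l}$. First I would check that $\AC$ satisfies the hypotheses of Theorem~\ref{recolenoughproj}: it is a length category by Proposition~\ref{recol41}; it has finitely many simple objects by Proposition~\ref{recol4}, since there are finitely many $\mu\in\L$ and finitely many simple objects in each $\AC_\mu$; and $\Ext^1$-finiteness of $\AC$ is assumed.

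Next I would observe that $\AC_{\L'}$ inherits a stratification by $\L'$: its Serre subcategories are the $\AC_{\L''}$ for lower $\L''\subseteq\L'$ (any such $\L''$ is automatically lower in $\L$), its strata categories are the $\AC_\mu$ with $\mu\in\L'$, and axioms (S1)--(S3) for this data are restrictions of those for $\AC$. These strata categories are length categories with finitely many simple objects, so the only point requiring an argument is that $\AC_{\L'}$ is again $\Ext^1$-finite. For this I would use that the inclusion $i_*: \AC_{\L'}\to\AC$ is exact, fully faithful, and has Serre-subcategory essential image: for simple $A,B\in\AC_{\L'}$ this yields $\End_{\AC_{\L'}}(B)\cong\End_{\AC}(i_*B)$ together with an injection $\Ext^1_{\AC_{\L'}}(A,B)\hookrightarrow\Ext^1_{\AC}(i_*A,i_*B)$, since the middle term of an extension of $i_*A$ by $i_*B$ lies in the Serre subcategory and hence in the essential image of $i_*$; thus $\Ext^1$-finiteness descends from $\AC$ to $\AC_{\L'}$.

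With these verifications in hand, Theorem~\ref{recolenoughproj} applied to $\AC_{\L'}\to\AC\to\AC_{\l}$ shows that $\AC$ has enough projectives if and only if both $\AC_{\L'}$ and $\AC_{\l}$ do; the inductive hypothesis, applied to $\AC_{\L'}$ (whose strata categories are exactly $\{\AC_\mu:\mu\in\L'\}$), shows that $\AC_{\L'}$ has enough projectives if and only if $\AC_\mu$ does for every $\mu\in\L'$; and since $\L=\L'\sqcup\{\l\}$ these combine to give that $\AC$ has enough projectives if and only if every strata category does. I expect the main obstacle to be the two genuinely non-formal points: (a) the descent of $\Ext^1$-finiteness to the Serre subcategory $\AC_{\L'}$, where one must check that $\End$ and $\Ext^1$ of simple objects agree whether computed in $\AC$ or in $\AC_{\L'}$, and (b) producing the opposite stratification so that the injective case reduces cleanly to the projective one; the rest is bookkeeping with (S1)--(S3) and a direct appeal to Theorem~\ref{recolenoughproj}.
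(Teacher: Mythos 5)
Your proof is correct and follows essentially the same route as the paper's: iterate Theorem~\ref{recolenoughproj} across the stratification, using Proposition~\ref{recol41} for the length hypothesis and Proposition~\ref{recol4} to bound the number of simple objects at each stage. You make explicit a point the paper's terse proof leaves implicit—namely that $\Ext^1$-finiteness passes to the Serre subcategories $\AC_{\L'}$ because $i_*$ identifies $\End$ and $\Ext^1$ of simple objects in $\AC_{\L'}$ with those in $\AC$ (Serre subcategories are closed under extensions, so in fact $\Ext^1_{\AC_{\L'}}(A,B)\cong\Ext^1_{\AC}(i_*A,i_*B)$, which is a bit more than the injection you cite but changes nothing)—and you cleanly dispatch the injective statement via opposite categories, so no gaps.
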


\begin{proof}

By Proposition \ref{recol41}, every category $\AC_{\L'}$ (for lower $\L' \subset \L$) satisfies the conditions of Theorem \ref{recolenoughproj}. So we can obtain a projective cover in $\AC$ of any simple object $j^{\l}_{!*} L$ by repeatedly applying the construction in the proof of Theorem \ref{recolenoughproj} to larger and larger Serre subcategories of $\AC$.
\end{proof}

The following result follows immediately from Proposition \ref{enoughprojandinj} and Corollary \ref{recolenoughprojcor1}.

\begin{cor}\label{recolenoughprojcor}
Let $\AC$ be a $\km$-linear abelian category with a stratification by a finite poset. Then $\AC$ is finite over $\km$ if and only if the same is true of every strata category.
\end{cor}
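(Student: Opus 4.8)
\emph{Proof strategy.} The plan is to unwind the definition: ``$\AC$ is finite over $\km$'' is the conjunction of four conditions, namely (a) $\AC$ is a length category, (b) $\AC$ has finitely many isomorphism classes of simple objects, (c) every $\Hom$-space in $\AC$ is finite dimensional over $\km$, and (d) $\AC$ has enough projectives. I would show that each of these four conditions holds for $\AC$ if and only if it holds for every strata category $\AC_{\l}$, and then simply conjoin the four equivalences.

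Conditions (a) and (b) are essentially already done: (a) is exactly Proposition~\ref{recol41}, and for (b) Proposition~\ref{recol4} furnishes a bijection between the simple objects of $\AC$ (up to isomorphism) and the disjoint union over $\l \in \L$ of the simple objects of $\AC_{\l}$, so since $\L$ is finite, $\AC$ has finitely many simples iff each $\AC_{\l}$ does. For (c), one direction --- strata categories with finite-dimensional $\Hom$-spaces force $\AC$ to have finite-dimensional $\Hom$-spaces --- is precisely the opening remark in the proof of Proposition~\ref{enoughprojandinj}, obtained by an induction along the recollements of the stratification using that $i_*, j_!, j_*$ are fully faithful and $\AC^Z$ is Serre. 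For the reverse direction, in any recollement as in~(\ref{recollement}) sitting inside the stratification and any $X, Y \in \AC_U$ the adjunctions give $\Hom_{\AC_U}(X,Y) \simeq \Hom_{\AC}(j_! X, j_* Y)$, which is finite dimensional when $\AC$ has finite-dimensional $\Hom$-spaces; a downward induction on lower subsets then shows each $\AC_{\l}$ has finite-dimensional $\Hom$-spaces.

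Condition (d) carries the real content, and it is supplied by Corollary~\ref{recolenoughprojcor1}: once (a) and (b) are in hand, every strata category is a length category with finitely many simples, so the hypotheses of that corollary are met as soon as $\AC$ is $\Ext^1$-finite. In the direction where the strata categories are assumed finite over $\km$, $\AC$ is $\Ext^1$-finite by Proposition~\ref{enoughprojandinj}, and Corollary~\ref{recolenoughprojcor1} then yields that $\AC$ has enough projectives; combined with (a), (b), (c) this gives $\AC$ finite over $\km$. In the direction where $\AC$ is assumed finite over $\km$, $\AC$ is an abelian length category with enough projectives and hence $\Ext^1$-finite by Proposition~\ref{finExt}, so Corollary~\ref{recolenoughprojcor1} gives that every strata category has enough projectives; combined with (a), (b), (c) in the strata direction, every $\AC_{\l}$ is finite over $\km$.

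Assembling the four equivalences completes the proof. I do not expect a substantive obstacle here: the hard work (constructing projective covers across a recollement) has already been carried out in Theorem~\ref{recolenoughproj} and packaged in Corollary~\ref{recolenoughprojcor1}. The only point requiring a little care is the bookkeeping of $\Ext^1$-finiteness, i.e.\ making sure that in each direction one has the hypothesis needed to invoke Corollary~\ref{recolenoughprojcor1} --- via Proposition~\ref{enoughprojandinj} when passing up from the strata, and via Proposition~\ref{finExt} when passing down from $\AC$.
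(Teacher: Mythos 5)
Your proposal is correct and follows the same route as the paper: the paper's proof is simply the sentence that the corollary ``follows immediately from Proposition \ref{enoughprojandinj} and Corollary \ref{recolenoughprojcor1}'', and your write-up fills in precisely the bookkeeping that this phrase is suppressing — Proposition \ref{recol41} for the length condition, Proposition \ref{recol4} for finiteness of the set of simples, the adjunction isomorphism $\Hom_{\AC_U}(X,Y)\simeq\Hom_{\AC}(j_!X,j_*Y)$ (together with fully-faithfulness of $i_*$) for finite-dimensionality of $\Hom$-spaces in both directions, and Propositions \ref{enoughprojandinj} and \ref{finExt} to supply the $\Ext^1$-finiteness hypothesis needed to invoke Corollary \ref{recolenoughprojcor1} in each direction. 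Nothing here diverges from the paper's intent; you have just made the implicit induction along recollements explicit.
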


From this result we recover the following result of Cipriani-Woolf.

\begin{cor}[Corollary 5.2 of \cite{CW21}]\label{CipWoo}
Let $P_{\L} (X, \km)$ be the category of perverse sheaves (with coefficients in a field $\km$) on a quasiprojective complex variety $X$ that is constructible with respect to a stratification, $X = \bigcup_{\l \in \L} X_{\l}$, with finitely many strata.
Then $P_{\L}(X, \km)$ is finite over $\km$ if and only if each category $\km [\pi_1 (X_{\l})] \mod_{fg}$ is finite over $\km$.
\end{cor}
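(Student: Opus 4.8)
The plan is to obtain this as a direct specialization of Corollary \ref{recolenoughprojcor}. First I would set up the geometric data exactly as in Example \ref{recex1}: equip $\L$ with the closure order $\mu \leq \l$ iff $X_\mu \subset \overline{X_\l}$, so that $\L$ is a finite poset, and recall that for each lower subset $\L' \subset \L$ the union $X_{\L'} := \bigcup_{\l \in \L'} X_\l$ is closed, that $P_\L(X_{\L'},\km)$ is a Serre subcategory of $\AC := P_\L(X,\km)$ (being the kernel of the restriction $j^*$ for the complementary open embedding), and that for a maximal $\l \in \L'$ the geometric recollement of constructible derived categories attached to the closed embedding $X_{\L'\setminus\{\l\}} \hookrightarrow X_{\L'}$ with open complement $X_\l$ induces, via the zeroth perverse-cohomology functor \cite[Proposition 1.4.16]{BBD82}, a recollement of abelian categories whose quotient term is $P_\L(X_\l,\km)$. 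One would then check that these assemble into a stratification of the abelian category $\AC$ in the sense of Definition \ref{stratification}, verifying (S1)--(S3) as in Example \ref{recex1}. The one geometric hypothesis needed is that the stratification be good, so that $j_{\l*}$ and $j_{\l!}$ preserve $\L$-constructibility; this holds whenever $\L$ satisfies Whitney's conditions, which may be assumed in the algebraic setting without loss of generality.

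Next I would identify the strata categories. By the monodromy equivalence recalled in Example \ref{recex1}, $\Loc^{ft}(X_\l,\km) \simeq \km[\pi_1(X_\l)]\mod_{fg}$, and since translation by the perversity value $p(\l)$ is an equivalence of categories, the strata category $\AC_\l = \Loc^{ft}(X_\l,\km)[p(\l)]$ is equivalent to $\km[\pi_1(X_\l)]\mod_{fg}$ for any choice of perversity.

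Finally, since $\L$ is finite, Corollary \ref{recolenoughprojcor} applies verbatim to this stratification of $\AC$ and gives that $P_\L(X,\km)$ is finite over $\km$ if and only if each strata category $\km[\pi_1(X_\l)]\mod_{fg}$ is finite over $\km$, which is the claim. The only real obstacle is the bookkeeping of the first paragraph---confirming that the geometric recollements genuinely combine into a stratification of $P_\L(X,\km)$ and that one is in the good-stratification situation where the six functors restrict correctly to $\DC^b_\L$---after which the statement is a purely formal consequence of the theory developed above; I would dispatch the compatibility conditions (S2) and (S3) precisely as in Example \ref{recex1}.
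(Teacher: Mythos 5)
Your proposal matches the paper's approach exactly: the paper obtains Corollary \ref{CipWoo} as an immediate specialization of Corollary \ref{recolenoughprojcor}, relying on the stratification of $P_\L(X,\km)$ constructed in Example \ref{recex1} and the identification of the strata categories with $\km[\pi_1(X_\l)]\mod_{fg}$. Your handling of the "good stratification" hypothesis is appropriately careful and is implicit in the paper's statement as well.
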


For example, if $X$ has a finite stratification, $X = \bigcup_{\l \in \L} X_{\l}$, in which each $X_{\l}$ has finite fundamental group, then the category $P_{\L} (X, \km)$ is finite over $\km$.

\begin{cor}\label{enoughprojEquiv}
Let $G$ be an algebraic group and let $X$ be a $G$-variety with finitely many orbits, each connected. Let $\km$ be a field. The category, $P_{G} (X, \km)$, of $G$-equivariant perverse sheaves is finite over $\km$ if and only if for each $G$-orbit $\OC_{\l}$ and $x \in \OC_{\l}$, the category $\km [G^{x} / (G^{x})^{\circ}] \mod_{fg}$ is finite over $\km$.
\end{cor}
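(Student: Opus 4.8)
The plan is to reduce immediately to Corollary \ref{recolenoughprojcor}. First I would recall from the example on $G$-equivariant perverse sheaves that $P_{G}(X,\km)$ carries a stratification indexed by the poset $\L$ of $G$-orbits (with the closure order $\l \leq \mu$ iff $\OC_{\l} \subset \overline{\OC_{\mu}}$): the Serre subcategories are $P_{G}\bigl(\bigcup_{\l \in \L'}\OC_{\l},\km\bigr)$ for lower $\L' \subset \L$, and the recollements arise by applying \cite[Proposition 1.4.16]{BBD82} to the equivariant recollement of triangulated categories attached to a $G$-invariant closed subvariety and its open complement. Since $X$ has finitely many orbits, $\L$ is a finite poset, so the hypotheses of Corollary \ref{recolenoughprojcor} are met.

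Second, I would identify the strata categories. For a homogeneous $G$-variety $\OC_{\l}$ with basepoint $x_{\l}$, every $G$-equivariant perverse sheaf is a shifted finite-type local system, and, as recalled in the example following \cite[Proposition 6.2.13]{Ach21}, there is an equivalence $P_{G}(\OC_{\l},\km) \simeq \km[G^{x_{\l}}/(G^{x_{\l}})^{\circ}]\mod_{fg}$. In particular the strata category $\AC_{\l}$ is $\km$-linear abelian, and it is finite over $\km$ precisely when $\km[G^{x_{\l}}/(G^{x_{\l}})^{\circ}]\mod_{fg}$ is; note also that, up to isomorphism, the component group $G^{x_{\l}}/(G^{x_{\l}})^{\circ}$ does not depend on the choice of basepoint $x_{\l} \in \OC_{\l}$, since any two points of an orbit have conjugate stabilizers.

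Third, I would invoke Corollary \ref{recolenoughprojcor}: a $\km$-linear abelian category with a stratification by a finite poset is finite over $\km$ if and only if each strata category is finite over $\km$. Combining this with the identification of the strata categories in the previous paragraph gives the statement.

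The point requiring care is not the corollary itself, which is essentially immediate, but the justification of the two inputs it relies on: that the orbit stratification of $P_{G}(X,\km)$ genuinely satisfies Definition \ref{stratification} (in particular that the recollements of hearts are induced from the equivariant recollements of triangulated categories, which requires the relevant $t$-exactness properties of the equivariant six functors), and that $P_{G}(\OC_{\l},\km) \simeq \km[G^{x_{\l}}/(G^{x_{\l}})^{\circ}]\mod_{fg}$. Both have already been recorded in the example on $G$-equivariant perverse sheaves, so in the write-up I would simply cite them.
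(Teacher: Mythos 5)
Your proposal is correct and follows exactly the paper's (implicit) argument: the example on $G$-equivariant perverse sheaves already supplies the orbit stratification with strata categories $P_G(\OC_\l,\km)\simeq \km[G^{x_\l}/(G^{x_\l})^\circ]\mod_{fg}$, and the statement is then an immediate instance of Corollary \ref{recolenoughprojcor}.
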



\section{Standard and costandard objects}\label{standardsec}

In this section we focus on abelian length categories $\AC$ with finitely many simples, enough projectives and injectives, and admitting a stratification by a finite non-empty poset $\L$. Let $B$ be a set indexing the simple objects of $\AC$ up to isomorphism. Write $L(b)$ for the simple object corresponding to $b \in B$, and write $P(b)$ and $I(b)$ for the projective cover and injective envelope of $L(b)$. 

For each $\l \in \L$, write $\AC_{\leq \l} := \AC_{\{ \mu \in \L ~|~ \mu \leq \l \}}$ and let $j^{\l}_{!}: \AC_{\l} \to \AC$ be the composition 
\[
\begin{tikzcd}
	{\AC_{\l}} & {\AC_{\leq \l}} & \AC
	\arrow["{j_{!}}", from=1-1, to=1-2]
	\arrow[hook, from=1-2, to=1-3]
\end{tikzcd}
\]
Define  $j^{\l}_{*}: \AC_{\l} \to \AC$ and  $j^{\l}_{!*}: \AC_{\l} \to \AC$ likewise. Let $j^{\l} \colon \AC_{\leq \l} \to \AC_{\l}$ denote the Serre quotient functor.

Define the {\em stratification function} $\r: B \to \L$ that assigns to each $b \in B$ the $\l \in \L$ in which $L(b) = j_{!*}^{\l} L_{\l} (b)$. Let $P_{\l} (b)$ and $I_{\l} (b)$ be the projective cover and injective envelope of the simple object $L_{\l}(b)$ in $\AC_{\l}$.

For $b \in B$, define the {\em standard} and {\em costandard} objects
\[
\D (b) := j^{\l}_! P_{\l} (b), \qquad \nabla (b) := j^{\l}_* I_{\l} (b),
\]
where $ \l = \r (b)$.

The following result follows from the proofs of Theorem \ref{recolenoughproj} and Corollary \ref{recolenoughprojcor1}.

\begin{porism}\label{standards}
Let $\AC$ be an abelian category with a stratification by a finite non-empty poset $\L$, in which every strata category is a length category with finitely many simple objects. Let $\r: B \to \L$ be the stratification function for $\AC$. 

If $\AC$ has enough projectives then for each $\l \in \L$ and $b \in \r^{-1} (\l)$, the projective indecomposable object $P(b) \in \AC$ fits into a short exact sequence
\[
0 \to Q (b) \to P (b) \to \D (b) \to 0
\]
in which
$Q(b)$ has a filtration by quotients of $\D( b')$ satisfying $\r(b') > \r (b)$. 

If $\AC$ has enough injectives then for each $\l \in \L$ and $b \in \r^{-1} (\l)$, the injective indecomposable object $I(b) \in \AC$ fits into a short exact sequence
\[
0 \to \nabla (b) \to I (b) \to Q' (b) \to 0
\]
in which
$Q'(b)$ has a filtration by subobjects of $\nabla( b')$ satisfying $\r(b') > \r (b)$. 

\end{porism}

\begin{proof}
We just prove the first statement by induction on $|\L|$. The base case $|\L| = 1$ is trivial. 

Consider the projective cover, $P(b)$, of simple object $L(b)$ in $\AC$. If $\r (b)$ is maximal then $P(b) \simeq \D (b)$ and the result holds. Suppose $\r (b)$ is not maximal, and let $\mu \in \L$ be a maximal element. Consider the recollement
\[
\begin{tikzcd}
	{\AC_{< \mu}} && \AC && {\AC_{\mu}}
	\arrow["{i_{*}}", from=1-1, to=1-3]
	\arrow["{i^{*}}"', shift right=5, from=1-3, to=1-1]
	\arrow["{i^{!}}", shift left=5, from=1-3, to=1-1]
	\arrow["{j^{*}}", from=1-3, to=1-5]
	\arrow["{j_{!}}"', shift right=5, from=1-5, to=1-3]
	\arrow["{j_{*}}", shift left=5, from=1-5, to=1-3]
\end{tikzcd}
\]
Let $P_{< \mu} (b)$ and $\D_{< \mu} (b)$ be the projective indecomposable and standard object in $\AC_{< \mu}$ corresponding to the simple object $i^* L(b) \in \AC_{< \mu}$. By induction there is a short exact sequence
\[
0 \to Q_{< \mu} (b) \to P_{< \mu} (b) \to \D_{< \mu} (b) \to 0
\]
in which $Q_{< \mu} (b)$ has a filtration by quotients of standard objects $\D_{< \mu} ( b')$ satisfying $ \r(b') > \r (b)$. Since $i_*$ is exact we get the following short exact sequence in $\AC$:
\begin{equation}\label{porproof1}
0 \to i_* Q_{< \mu} (b) \to i_* P_{< \mu} (b) \to \D (b) \to 0
\end{equation}
and  $i_* Q_{< \mu} (b)$ has a filtration by quotients of standard objects $\D ( b')$ satisfying $\mu > \r(b') > \r (b)$.
By applying the construction in Step 1 of the proof of Theorem \ref{recolenoughproj}, $P(b)$ fits into the short exact sequence in $\AC$:
\begin{equation}\label{porproof2}
0 \to Q_{\mu} (b) \to P (b) \to i_* P_{< \mu} (b) \to 0
\end{equation}
and $Q_{\mu} (b)$ is the direct sum of quotients of standard objects of the form $\D (b')$ in which $\r (b') = \mu$. 
Combining (\ref{porproof1}) and (\ref{porproof2}) gives the following diagram with exact rows and columns:
\[
\begin{tikzcd}
	& 0 & 0 & 0 \\
	0 & Q_{\mu} (b) & Q_{\mu} (b) & 0 & 0 \\
	0 & Q (b) & P (b) & \D (b) & 0 \\
	0 & i_* Q_{< \mu} (b)  & i_* P_{< \mu} (b) & \D (b) & 0 \\
	& 0 & 0 & 0
	\arrow[from=2-1, to=2-2]
	\arrow[from=2-2, to=2-3]
	\arrow[from=2-3, to=2-4]
	\arrow[from=2-4, to=2-5]
	\arrow[from=1-4, to=2-4]
	\arrow[from=2-4, to=3-4]
	\arrow[from=3-4, to=4-4]
	\arrow[from=4-4, to=5-4]
	\arrow[from=4-3, to=5-3]
	\arrow[from=3-3, to=4-3]
	\arrow[from=2-3, to=3-3]
	\arrow[from=1-3, to=2-3]
	\arrow[from=3-1, to=3-2]
	\arrow[from=3-2, to=3-3]
	\arrow[from=3-3, to=3-4]
	\arrow[from=3-4, to=3-5]
	\arrow[from=1-2, to=2-2]
	\arrow[from=2-2, to=3-2]
	\arrow[from=3-2, to=4-2]
	\arrow[from=4-2, to=4-3]
	\arrow[from=4-3, to=4-4]
	\arrow[from=4-4, to=4-5]
	\arrow[from=4-1, to=4-2]
	\arrow[from=4-2, to=5-2]
\end{tikzcd}
\]
The result follows.

\end{proof}

\begin{ex}[Projective Perverse Sheaves]\label{perverseStandards}
Let $X = \bigcup_{\l \in \L} X_{\l}$ be a stratification of a quasiprojective complex variety. 
Let $\rho: B \to \L$ be a stratification function for the perverse sheaves category $P_{\L}(X, \km)$.
For $\l \in \L$, write $j^{\l}: X_{\l} \into X$ for the embedding of the corresponding stratum. 
Let ${}^{p} {\HC^{0}}: \DC_{\L} (X, \km) \to P_{\L} (X, \km)$ be the zero-th perverse cohomology functor. 

Consider a simple perverse sheaf, $\LC_{\l} (b)$, in $P(X_{\l},\km)$ with projective cover $\PC_{\l} (b)$. Let $\LC (b) := j_{!*}^{\l} \LC_{\l} (b)$ be the corresponding simple perverse sheaf in $P_{\L}(X, \km)$, and let $\PC(b)$ be the projective cover of $\LC (b)$ in $P_{\L}(X, \km)$.

If $\l \in \L$ is maximal then $\PC (b)$ is isomorphic to the standard object $\D (b) := {}^{p} {\HC^{0}} (j_{!}^{\l} \PC_{\l}(b))$. 

Suppose otherwise that $\l \in \L$ is not maximal. Consider the closed and open inclusions
\[
\begin{tikzcd}
	{\overline{X_{\l}}} & X & {X \setminus \overline{X_{\l}}}
	\arrow["i", hook, from=1-1, to=1-2]
	\arrow["j"', hook', from=1-3, to=1-2]
\end{tikzcd}
\]

Then $\PC (b)$ fits into an exact sequence 
\[
\begin{tikzcd}
	{{}^p \HC^0 (j_! j^* \PC (b)) } & {\PC (b)} & {{}^p \HC^0 (i_* i^* \PC (b))} & 0
	\arrow[from=1-1, to=1-2]
	\arrow[from=1-2, to=1-3]
	\arrow[from=1-3, to=1-4]
\end{tikzcd}
\]
\end{ex}
Moreover $ {{}^p \HC^0 (i_* i^* \PC (b))} \simeq \D (b)$ and the image of the map ${{}^p \HC^0 (j_! j^* \PC (b)) } \to {\PC (b)}$ has a filtration by quotients of $\D( b')$ satisfying $\r(b') > \r (b)$. Indeed the later statement can be checked by induction of the size of the stratification $|\L|$, using the facts that $j^* \PC (b)$ is projective in $P_{\L \setminus \overline{\{\l\}}}( X \setminus \overline{X_{\l}}, \km)$ and the functor ${{}^p \HC^0 (j_!(-))}: P_{\L \setminus \overline{\{\l\}}}( X \setminus \overline{X_{\l}}) \to P_{\L}(X, \km)$ is right exact.

%

\section{Brundan and Stroppel's $\e$-stratified categories}\label{BrunStropSection}


In this section we give necessary and sufficient conditions for a finite abelian category with a stratification by a finite poset to be $\e$-stratified in the sense of Brundan and Stroppel \cite{BS18} (Theorem \ref{eStratTheorem}). This result specializes to a characterization of highest weight categories originally given by Krause \cite[Theorem 3.4]{Kra17a}. We recall this characterisation in Corollary \ref{hwcresult}.

Let $\AC$ be an abelian length category with finitely many simples, enough projectives and injectives, and with a stratification by a finite non-empty poset $\L$. Let $\rho: B \to \L$ be the stratification function corresponding to this stratification.

For $b \in B$ and $\l = \rho (b) \in \L$, define {\em proper standard} and {\em proper costandard} objects 
\[
\overline{\D} (b) := j^{\l}_! L_{\l}(b),  \qquad \overline{\nabla} (b) := j^{\l}_* L_{\l}(b) .
\]

For a {\em sign function} $\e \colon \Lambda
\to \{+, - \}$, define
the {\em $\e$-standard} and {\em $\e$-costandard objects}
\begin{align*}
\D_\e(b) &:= \left\{
\begin{array}{ll}
\Delta(b)&\text{if $\e(\rho(b)) = +$}\\
\overline\Delta(b)&\text{if $\e(\rho(b)) = -$}
\end{array}
\right.,&
\nabla_\e(b) &:= \left\{
\begin{array}{ll}
\overline\nabla(b)&\text{if $\e(\rho(b)) = +$}\\
\nabla(b)&\text{if $\e(\rho(b)) = -$}
\end{array}
\right. .
\end{align*}

Note that since $j_{!}^{\l}$ and $j_{*}^{\l}$ are fully-faithful, these objects have local endomorphism rings and are hence irreducible.
Note also that if $\r (b) \nleq \r (b')$ then for any $\e : \L \to \{+,-\}$,
\begin{equation}\label{exceptional}
\Hom_{\AC} (\D_{\e} (b), \D_{\e}(b')) = 0 = \Hom_{\AC} (\nabla_{\e} (b'), \nabla_{\e} (b)) .
\end{equation}
Indeed the only simple quotient of $\D_{\e} (b)$ is $L(b)$, and all simple subobjects, $L(b'')$, of $\D_{\e}(b')$ satisfy $\r(b'') \leq \r (b')$. Likewise the only simple subobject of $\nabla_{\e} (b)$ is $L(b)$, and all simple quotients, $L(b'')$, of $\nabla_{\e} (b')$ satisfy $\r(b'') \leq \r (b')$.

The following definition is due to Brundan and Stroppel \cite{BS18}.\footnote{In Brundan and Stoppel's definition, $\e$-stratified categories need not be finite (they satisfy slightly weaker finiteness conditions).}

\begin{defn}[$\e$-stratified category]\label{BSdefn}
Let $\AC$ be a finite abelian category with a stratification by a poset $\L$ and stratification function $\rho: B \to \L$. Let $\e \colon \L \to \{+, -\}$ be a function. 
Say that $\AC$ is an {\em $\e$-stratified category} if the following equivalent conditions are satisfied:
\begin{enumerate}
\item[($\e$-S1)]
For every $b \in B$, the projective indecomposable $P(b)$ has a filtration by objects of the form $\D_{\e} (b')$, where $\r (b') \geq \r (b)$.
\item[($\e$-S2)]
For every $b \in B$, the injective indecomposable $I(b)$ has a filtration by objects of the form $\nabla_{\e} (b')$, where $\r (b') \geq \r (b)$.
\end{enumerate}
\end{defn}
The equivalence of statements ($\e$-S1) and ($\e$-S2) is shown in \cite[Theorem 2.2]{ADL98}. A proof of this fact can also be found in \cite[Theorem 3.5]{BS18}.

\begin{remark}
A finite abelian category $A \mod$ is $\e$-stratified if and only if $A$ is a {\em stratified algebra} in the sense of \cite{ADL98}.
\end{remark}

We introduce the next concept.

\begin{defn}[$\e$-exact stratification]\label{BSdefn}
For a function $\e \colon \L \to \{+,-\}$, say that a stratification of an abelian category $\AC$ by a  finite non-empty poset $\L$ is {\em $\e$-exact} if for all $\l \in \L$ the following hold:
\begin{enumerate}
\item[(i)]
If $\e (\l) = +$ then the functor $j_{*}^{\l}\colon \AC_{\l} \to \AC$ is exact.
\item[(ii)]
If $\e(\l) = -$ then the functor $j_{!}^{\l} \colon \AC_{\l} \to \AC$ is exact.
\end{enumerate}
\end{defn}

For $k \in \NM$, say that a recollement of abelian categories (as in (\ref{recollement})) is {\em $k$-homological} if for all $n \leq k$ and $X,Y \in \AC_{Z}$,
\[
\Ext^{n}_{\AC_{Z}}(X, Y) \simeq \Ext^{n}_{\AC} (i_{*} X, i_{*} Y).
\]
Say that a recollement of abelian categories is {\em homological} if it is $k$-homological for all $k \in \NM$.
A study of homological recollements is given in \cite{Psa13}.

Say that a stratification of an abelian category is {\em $k$-homological} (respectively {\em homological}) if each of the recollements in the data of the stratification is $k$-homological (respectively  homological).

The following theorem is the main result of this section.

\begin{thm}\label{eStratTheorem}
Let $\AC$ be a finite abelian category. Then, for any finite non-empty poset $\L$ and function $\e \colon \L \to \{+, -\}$, the following statements are equivalent:
\begin{enumerate}
\item
$\AC$ has an $\e$-exact homological stratification by $\L$.
\item
$\AC$ has an $\e$-exact 2-homological stratification by $\L$.
\item
$\AC$ is $\e$-stratified.
\end{enumerate}
\end{thm}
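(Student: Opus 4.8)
The plan is to prove the cycle of implications $(1) \Rightarrow (3) \Rightarrow (2) \Rightarrow (1)$, keeping in mind that $(1) \Rightarrow (2)$ is trivial, so the real content is $(2) \Rightarrow (3)$ and $(3) \Rightarrow (1)$, or alternatively $(1) \Rightarrow (3)$ and $(2) \Rightarrow (1)$. I would organize it as follows. First, I would establish the relationship between the exactness conditions on $j_*^\l$ (resp. $j_!^\l$) and the vanishing of higher left (resp. right) derived functors, and note that $j_*^\l$ exact means $\nabla(b) = \overline\nabla(b)$ and dually $j_!^\l$ exact means $\D(b) = \overline\D(b)$; in either case $\D_\e(b)$ and $\nabla_\e(b)$ become "compatible" in the sense needed. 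The key structural input is Porism \ref{standards}: every $P(b)$ sits in $0 \to Q(b) \to P(b) \to \D(b) \to 0$ with $Q(b)$ filtered by quotients of $\D(b')$, $\r(b') > \r(b)$, and dually for injectives.

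For $(1) \Rightarrow (3)$: Given an $\e$-exact homological stratification, I would argue by induction on $|\L|$, peeling off a maximal $\mu \in \L$ via the recollement $\AC_{<\mu} \hookrightarrow \AC \to \AC_\mu$. For $b$ with $\r(b) = \mu$: if $\e(\mu) = +$, then $\D(b) = j_!^\mu P_\mu(b) = \D_\e(b)$ and one checks $j_! P_\mu(b)$ is already projective (Proposition \ref{recol5}), so $P(b) = \D_\e(b)$ is trivially $\D_\e$-filtered; if $\e(\mu) = -$, then exactness of $j_!^\mu$ forces $\D(b) = \overline\D(b) = \D_\e(b)$ and the universal-extension construction in Theorem \ref{recolenoughproj} exhibits $P(b)$ as an extension of $j_! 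P_\mu(b)$ by a module built from projective covers of simples in $\AC_{<\mu}$ — the homological hypothesis ($2$-homological suffices here) guarantees the universal extension is a genuine projective and, crucially, that the "$Q$" piece is filtered by the right $\D_\e(b')$ with $\r(b') > \mu$ coming from the inductive hypothesis applied to $\AC_{<\mu}$ (using that $i_*$ is exact and preserves $\D_\e$-filtrations, since $i_*$ commutes with the $j_!^\l, j_*^\l$ for $\l \ne \mu$ by condition (S3)). For $b$ with $\r(b) < \mu$, $i_* P_{<\mu}(b)$ has a $\D_\e$-filtration inductively, and again the extension construction tacks on a $\D_\e(b')$-filtered piece with $\r(b') = \mu$; here one must use $\e(\mu)$ to know which of $\D(b')$ or $\overline\D(b')$ appears and match it with what the construction produces, again invoking the exactness hypothesis.

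For $(3) \Rightarrow (2)$ (hence $(3) \Rightarrow (1)$ modulo $(1) \Leftrightarrow (2)$, which I'd actually get by proving $(2) \Rightarrow (1)$ separately): suppose $\AC$ is $\e$-stratified. I would show first that the stratification must be $\e$-exact: if $\e(\l) = +$, one needs $j_*^\l$ exact, equivalently $R^1 j_*^\l = 0$; using that every $P(b)$ has a $\D_\e$-filtration and a BGG-type reciprocity / Ext-orthogonality between $\D_\e$'s and $\nabla_\e$'s (the standard computation $\Ext^n(\D_\e(b), \nabla_\e(b')) = 0$ for $n > 0$, or $= \delta_{bb'}\km$-type statements for $n=0$, which follows from adjunction and (\ref{exceptional})), one deduces that the costandard objects $\nabla_\e(b) = \overline\nabla(b)$ have no self-extensions in a way that forces $j_*^\l$ exact on $\AC_\l$. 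Then for the $2$-homological condition: again induct on $|\L|$, peel off maximal $\mu$; the hypothesis that $I(b)$ ($b$ with $\r(b) = \mu$) is $\nabla_\e$-filtered together with $\nabla_\e$-$\D_\e$ orthogonality lets one compute $\Ext^2_\AC(i_* X, i_* Y)$ for $X, Y \in \AC_{<\mu}$ by resolving via a $\D_\e$-filtered projective resolution and checking the tail lands in the Serre subcategory — the exactness of $j_!^\mu$ or $j_*^\mu$ (whichever $\e(\mu)$ dictates) is exactly what kills the potential discrepancy between $\Ext^2_{\AC_{<\mu}}$ and $\Ext^2_\AC$.

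\textbf{Expected main obstacle.} The hard part will be the bookkeeping in the induction for $(1) \Rightarrow (3)$: one must carefully track how the universal-extension construction of Theorem \ref{recolenoughproj} interacts with the $\D_\e$-filtration structure, and in particular verify that when $\e(\mu) = -$ the piece of $P(b)$ built from $\AC_\mu$ is filtered by \emph{proper} standard objects $\overline\D(b')$ rather than standard objects — this is precisely where $\e$-exactness (exactness of $j_!^\mu$) is used, and where a naive argument would break. Dually, pinning down exactly why the $2$-homological condition is \emph{necessary} (not just sufficient) for $\e$-stratification — i.e. extracting it from the $\nabla_\e$-filtrations of injectives — requires a delicate $\Ext$-degree-shifting argument, and getting the "$2$" sharp (rather than needing full homologicality) is the subtle point that makes statement (2) a genuine improvement over a weaker version of the theorem.
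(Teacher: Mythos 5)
Your broad strategy — induct on $|\L|$, peel off a maximal element $\mu$, and use the $\e$-exactness hypothesis to match the layer at stratum $\mu$ with the correct flavour of $\e$-standard object — is the same as the paper's. But there are two genuine problems with how you organize the implications.

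First, the missing workhorse. For the implication "2-homological plus $\e$-exact implies $\e$-stratified", the paper isolates a short, self-contained lemma (Lemma~\ref{2homologicalcorollary}): \emph{in a 2-homological recollement with enough projectives, every projective $P$ fits in a short exact sequence $0 \to j_!j^*P \to P \to i_*i^*P \to 0$}. The proof: take the four-term sequence $0 \to K \to j_!j^*P \to P \to i_*i^*P \to 0$ from (R4), note $K \in \AC^Z$ and $i^*P$ projective in $\AC_Z$, so $\Ext^2_\AC(i_*i^*P, K)$ identifies with $\Ext^2_{\AC_Z}(i^*P, K)=0$ via 2-homologicality, then splice and use projectivity/indecomposability of $P$ to force $K = 0$. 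Once you have this, $(2)\Rightarrow(3)$ is a two-line induction: $i_*i^*P(b)$ is $\D_\e$-filtered by the inductive hypothesis, and $j_!j^*P(b)$ is a sum of standard (resp.\ proper standard) objects according to whether $j_*^\mu$ (resp.\ $j_!^\mu$) is exact. Your plan to instead trace the $\D_\e$-filtration through the universal-extension construction of Theorem~\ref{recolenoughproj} is technically workable (Porism~\ref{standards} comes out of that construction), but it only produces quotients of standard objects — upgrading those quotients to genuine $\e$-standard objects is precisely the step where you would end up reinventing Lemma~\ref{2homologicalcorollary}. You are right that this is "the hard part," but the clean fix is to prove that lemma first rather than bookkeeping through the universal extension.

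Second, and more seriously, your cycle does not close. You propose $(1)\Rightarrow(3)$, $(3)\Rightarrow(2)$, and then wave at "$(2)\Rightarrow(1)$ separately" without any argument, and there is no obvious direct argument that a 2-homological $\e$-exact stratification is automatically homological — that implication genuinely passes through $\e$-stratifiedness. The paper's organization sidesteps this: it proves $(2)\Rightarrow(3)$ (the stronger version of your $(1)\Rightarrow(3)$, and no harder given the lemma above) and $(3)\Rightarrow(1)$ directly, which then gives $(2)\Rightarrow(1)$ as the composite. For $(3)\Rightarrow(1)$ the paper invokes Brundan–Stroppel to get $\e$-exactness and the vanishing $\Ext^n_\AC(\D_\e(b),\nabla_\e(b'))=0$ for all $n \geq 1$, deduces $\Ext^n_\AC(i_*P, i_*I) = 0$ for projective $P$ and injective $I$ in $\AC_{<\l}$ by filtering $i_*P$ by $\e$-standards and $i_*I$ by $\e$-costandards, and then cites Psaroudakis's criterion (that this vanishing characterizes homological recollements). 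Your sketch of $(3)\Rightarrow(2)$ is heading in the same direction but only aims for degree $2$; if you are going to run that argument you should run it in all degrees and land on $(1)$, so that the cycle actually closes.
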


To prove this theorem we need the following lemma.

\begin{lemma}\label{2homologicalcorollary}
Let $\AC$ be an abelian length category with finitely many simple objects, and fitting into a 2-homological recollement of abelian categories as in (\ref{recollement}).

If $\AC$ has enough projectives then for any projective object $P \in \AC$, there is a short exact sequence
\[
0 \to j_! j^* P \to P \to i_* i^* P \to 0
\]

If $\AC$ has enough injectives then for any injective object $I \in \AC$, there is a short exact sequence
\[
0 \to i_* i^! I \to I \to j_* j^* I \to 0
\]
\end{lemma}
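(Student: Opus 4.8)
The plan is to prove the first statement; the second follows by passing to the opposite recollement (which is again $2$-homological, since $\Ext$-groups are self-opposite). Recall from (\ref{adex1}) that for any $X \in \AC$ there is already an exact sequence $j_! j^* X \to X \to i_* i^* X \to 0$; extending to the left as in (\ref{adex3}), we get $0 \to K \to j_! j^* X \to X \to i_* i^* X \to 0$ with $K \in \AC^Z$. So the content is to show that when $X = P$ is projective, the kernel $K$ vanishes. Equivalently, I must show $i^* K = 0$ (since $K \simeq i_* i^* K$). The idea is that $K$ is built out of $\Ext^1$ and $\Ext^2$ obstructions which vanish because $P$ is projective and the recollement is $2$-homological.

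First I would reduce to a dimension count. Since $\AC$ is a length category with finitely many simples, it suffices to show $\Hom_\AC(K, i_* S) = 0$ for every simple $S \in \AC_Z$, i.e. $\Hom_\AC(K, i_* S) = 0$. Apply $\Hom_\AC(-, i_* S)$ to the short exact sequence $0 \to K \to j_! j^* P \to P \to i_* i^* P \to 0$, broken into two short exact sequences $0 \to K \to j_! j^* P \to M \to 0$ and $0 \to M \to P \to i_* i^* P \to 0$ where $M = \im(j_! j^* P \to P)$. From the first sequence, $\Hom_\AC(j_! j^* P, i_* S) \to \Hom_\AC(K, i_* S) \to \Ext^1_\AC(M, i_* S) \to \Ext^1_\AC(j_! j^* P, i_* S)$. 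Now $\Hom_\AC(j_! j^* P, i_* S) \simeq \Hom_{\AC_Z}(i^* j_! j^* P, S) = 0$ by (R3), and $\Ext^1_\AC(j_! j^* P, i_* S) \simeq \Ext^1_{\AC_U}(j^* P, j^* i_* S) = 0$ since $j^* i_* = 0$ and $j_!$ is exact with $j^*$ as exact right adjoint (so $j_!$ preserves projectivity resolutions — more carefully, $\Ext^n_\AC(j_! A, i_* B)$ vanishes for all $n$ because $\RHom_\AC(j_! A, i_* B) \simeq \RHom_{\AC_U}(A, j^* i_* B) = \RHom_{\AC_U}(A, 0)$, using that $j_!$ is exact and left adjoint to the exact functor $j^*$). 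Hence $\Hom_\AC(K, i_* S) \hookrightarrow \Ext^1_\AC(M, i_* S)$.

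Next, from the second sequence $0 \to M \to P \to i_* i^* P \to 0$ and the long exact sequence for $\Hom_\AC(-, i_* S)$: $\Ext^1_\AC(P, i_* S) \to \Ext^1_\AC(M, i_* S) \to \Ext^2_\AC(i_* i^* P, i_* S)$. Since $P$ is projective, $\Ext^1_\AC(P, i_* S) = 0$, so $\Ext^1_\AC(M, i_* S) \hookrightarrow \Ext^2_\AC(i_* i^* P, i_* S)$. Here is where $2$-homologicality enters: $\Ext^2_\AC(i_* i^* P, i_* S) \simeq \Ext^2_{\AC_Z}(i^* P, S)$. But $i^* P$ is projective in $\AC_Z$ (it is the left adjoint of the exact functor $i_*$ applied to a projective — Proposition \ref{recol43} and its proof), so $\Ext^2_{\AC_Z}(i^* P, S) = 0$. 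Chaining the inclusions, $\Hom_\AC(K, i_* S) = 0$ for every simple $S$, hence $K = 0$, which is exactly the claimed short exact sequence.

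The main obstacle is bookkeeping the vanishing $\Ext^n_\AC(j_! A, i_* B) = 0$ cleanly — one wants $j_!$ to be exact (true, as the left adjoint of $j^*$ whose own left adjoint is exact, or directly since in a recollement $j_!$ is exact) and then to observe $\RHom_\AC(j_! A, -) \simeq \RHom_{\AC_U}(A, j^*(-))$ as $j^*$ is exact; applied to $i_* B$ this is $\RHom_{\AC_U}(A, 0) = 0$. A subtlety worth flagging: to invoke "$i^* P$ is projective in $\AC_Z$" I should cite the proof of Proposition \ref{recol43}, where it is shown that $i^*$ sends projectives to projectives because it is left adjoint to the exact functor $i_*$; I do not need $\AC$ to be Krull–Schmidt for that implication, only for the projective-cover statement, so this is available here. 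Once these two auxiliary vanishings are in place, the rest is the two-step diagram chase above.
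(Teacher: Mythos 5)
Your proof is correct, but it takes a genuinely different route from the paper. The paper reduces to $P$ indecomposable (via Krull--Schmidt), uses $\Ext^2_{\AC}(i_*i^*P,K)=0$ to build a $3\times 3$ diagram with a middle object $\EC$ surjecting onto $P$, lifts to $\a\colon P\to\EC$ by projectivity, and then exploits the fact that the composite $q\circ\a\in\End(P)$ is not nilpotent (since it is the identity on $i_*i^*P\neq 0$) and hence an automorphism, forcing the rows to split and $K$ to be a quotient of $j_!j^*P$, hence zero. Your argument avoids the indecomposability reduction and the diagram entirely: splitting the four-term sequence at $M=\im(j_!j^*P\to P)$, you chain $\Hom_\AC(K,i_*S)\hookrightarrow\Ext^1_\AC(M,i_*S)\hookrightarrow\Ext^2_\AC(i_*i^*P,i_*S)\simeq\Ext^2_{\AC_Z}(i^*P,S)=0$, using respectively the vanishing $\Hom_\AC(j_!j^*P,i_*S)=0$ (from $i^*j_!=0$), projectivity of $P$, $2$-homologicality, and projectivity of $i^*P$ in $\AC_Z$. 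This is cleaner and more transparent about exactly where the $2$-homological hypothesis enters.

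One flag: your aside that $\Ext^1_\AC(j_!j^*P,i_*S)=0$ ``since $j_!$ is exact'' is not sound --- $j_!$ is only right exact in a general recollement of abelian categories (e.g.\ $j_!=-\otimes_{eAe}eA$ for a ring with idempotent), so the derived adjunction $\RHom_\AC(j_!A,-)\simeq\RHom_{\AC_U}(A,j^*(-))$ does not follow as stated. Fortunately this claim is superfluous: your chain of injections only uses $\Hom_\AC(j_!j^*P,i_*S)=0$, which you establish correctly from (R3) via the $(i^*,i_*)$-adjunction. (The $\Ext^1$ vanishing is in fact true, but a correct proof proceeds by applying $j^*$ to a representing short exact sequence $0\to i_*S\to E\to j_!A\to 0$ to show it splits, not via exactness of $j_!$.) With that remark deleted, your argument is complete.
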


\begin{proof}
Let $P \in \AC$ be a projective indecomposable object in $\AC$. 
If $i^* P = 0$ then $j_! j^* P \simeq P$ and the result holds.

Suppose that $i^* P \neq 0$.
Consider the exact sequence
\[
0 \to K \to j_! j^* P \to P \to i_* i^* P \to 0
\]
where $K \in \AC^Z$. Since $i^* P$ is projective in $\AC_Z$ and the recollement is 2-homological, $\Ext^{2}_{\AC} (i_* i^* P, K) = 0$. In particular, there is an object $\EC \in \AC$ and surjection $q: \EC \to P$ that fits into the following diagram in which all rows and columns are exact.
\begin{equation}\label{Ext2}
\begin{tikzcd}
	& 0 & 0 & 0 \\
	0 & K & {j_! j^! P} & {Q} & 0 \\
	0 & K & \EC & P & 0 \\
	0 & 0 & {i_* i^* P} & {i_* i^* P} & 0 \\
	& 0 & 0 & 0
	\arrow[from=1-2, to=2-2]
	\arrow[from=2-2, to=3-2]
	\arrow[from=3-2, to=4-2]
	\arrow[from=4-2, to=5-2]
	\arrow[from=4-2, to=4-3]
	\arrow[from=4-3, to=4-4]
	\arrow[from=4-4, to=4-5]
	\arrow[from=4-1, to=4-2]
	\arrow[from=3-1, to=3-2]
	\arrow[from=3-2, to=3-3]
	\arrow["q", from=3-3, to=3-4]
	\arrow[from=3-4, to=3-5]
	\arrow[from=2-1, to=2-2]
	\arrow[from=2-2, to=2-3]
	\arrow[from=2-3, to=2-4]
	\arrow[from=2-4, to=2-5]
	\arrow[from=1-4, to=2-4]
	\arrow[from=2-4, to=3-4]
	\arrow[from=3-4, to=4-4]
	\arrow[from=4-4, to=5-4]
	\arrow[from=1-3, to=2-3]
	\arrow[from=2-3, to=3-3]
	\arrow[from=3-3, to=4-3]
	\arrow[from=4-3, to=5-3]
\end{tikzcd}
\end{equation}
Since $P$ is projective, there is a map $\a: P \to \EC$  making the following diagram commute.
\[
\begin{tikzcd}
	P & \EC & P \\
	& {i_*i^*P}
	\arrow["\a", from=1-1, to=1-2]
	\arrow["q", two heads, from=1-2, to=1-3]
	\arrow[two heads, from=1-1, to=2-2]
	\arrow[two heads, from=1-2, to=2-2]
	\arrow[two heads, from=1-3, to=2-2]
\end{tikzcd}
\]
Since $P$ is indecomposable, the endomorphism $q \circ \a : P \to P$ is either nilpotent or an automorphism. Since $i_* i^* P$ is nonzero, this map is not nilpotent, and hence is an automorphism of $P$. In particular, the first two rows of Diagram (\ref{Ext2}) split. Hence $K$ is a quotient of $j_! j^! P$. Since $j_! j^! P$ has no nonzero quotient in $\AC^Z$, $K =0$. The first result follows.

The second result holds by the dual argument.
\end{proof}

\begin{proof}[Proof of Theorem \ref{eStratTheorem}]
The implication $(1) \implies (2)$ is obvious. 

For the remainder of the proof, fix a maximal element $\l \in \L$ and recollement
\begin{equation}\label{eStratProofRec}
\begin{tikzcd}
	{\AC_{< \l}} && \AC && {\AC_{\l}}
	\arrow["{i_{*}}", from=1-1, to=1-3]
	\arrow["{i^{*}}"', shift right=5, from=1-3, to=1-1]
	\arrow["{i^{!}}", shift left=5, from=1-3, to=1-1]
	\arrow["{j^{*}}", from=1-3, to=1-5]
	\arrow["{j_{!}}"', shift right=5, from=1-5, to=1-3]
	\arrow["{j_{*}}", shift left=5, from=1-5, to=1-3]
\end{tikzcd}
\end{equation}

$(2) \implies (3)$. Let $P (b) \in \AC$ be a projective indecomposable object. Suppose that every projective indecomposable, $P(b')$, in $\AC_{< \l}$ has a filtration by $\e$-standard objects, $\D_{\e} (b'')$, in which $\rho (b'') \geq \rho (b')$.
In particular $i_* i^* P(b)$ has a filtration by $\e$-standard objects, $\D_{\e} (b')$, in which $\rho (b') \geq \rho (b)$.

By Lemma \ref{2homologicalcorollary}, if the recollement (\ref{eStratProofRec}) is 2-homological then $P(b)$ fits into a short exact sequence 
\[
0 \to j_! j^* P(b) \to P(b) \to i_* i^* P(b) \to 0
\]

If $j_*$ is exact then $j^* P (b)$ is projective (since $j^*$ is the left adjoint of an exact functor) and so $j_! j^* P (b)$ is a direct sum of standard objects.
If $j_!$ is exact then $j_! j^* P (b)$ has a filtration by proper standard objects.

In particular, in either case $\e (\l) = \pm$, $P(b)$ has a  filtration by $\e$-standard objects, $\D_{\e} (b')$, in which $\rho (b') \geq \rho (b)$.
The result follows by induction on $|\L|$.

$(3) \implies (1)$. Brundan-Stroppel show that if $\AC$ is $\e$-stratified then $\AC$ has an $\e$-exact stratification \cite[Theorem 3.5]{BS18} and
\begin{equation}\label{FamousExtCondition}
\Ext^{n}_{\AC} (\D_{\e} (b), \nabla_{\e} (b')) = 0
\end{equation}
 for all $b,b' \in B$ and $n \in \NM$ \cite[Theorem 3.14]{BS18}. 

Let $\AC$ be $\e$-stratified abelian length category.
Let $P$ be a projective object in $\AC_{< \l}$, and $I$ be an injective object in $\AC_{< \l}$. Then $P$ and $i_*P$ have a filtration by $\e$-standard objects, and $I$ and $i_* I$ have a filtration by $\e$-costandard objects. Since $\AC$ is a length category, then it follows from (\ref{FamousExtCondition}) that $\Ext^{n}_{\AC}  (i_* P, i_* I) = 0$. Since this is true for any projective object $P$ and injective object $I$ in $\AC_{< \l}$, the recollement (\ref{eStratProofRec}) is homological (see e.g. \cite[Theorem 3.9]{Psa13}). The result follows by induction on $|\L|$.

\end{proof}

\subsection{Highest weight categories}

Let $\km$ be a field. Say that a $\km$-linear abelian category $\AC$ is a {\em highest weight category} with respect to a finite poset $\L$ if $\AC$ is finite over $\km$, and for every $\l \in \L$ there is a projective indecomposable, $P_{\l}$, that fits into a short exact sequence in $\AC$:
\[
0 \to U_{\l} \to P_{\l} \to \D_{\l} \to 0
\]
in which the following hold:
\begin{enumerate}
\item[(HW1)]
$\End_{\AC} (\D_{\l})$ is a division ring for all $\l \in \L$.
\item[(HW2)]
$\Hom_{\AC} (\D_{\l}, \D_{\mu}) = 0$ whenever $\l \nleq \mu$.
\item[(HW3)]
$U_{\l}$ has a filtration by objects $\D_{\mu}$ in which $\l < \mu$.
\item[(HW4)]
$\bigoplus_{\l \in \L} P_{\l}$ is a projective generator of $\AC$.
\end{enumerate}

The following characterisation of highest weight categories is shown by Krause \cite[Theorem 3.4]{Kra17a}. We give a new proof using Theorem \ref{eStratTheorem}.

\begin{cor}\label{hwcresult}
Let $\km$ be a field, and let $\AC$ be a $\km$-linear abelian category. The following statements are equivalent.
\begin{enumerate}
\item[(1)]
$\AC$ is a highest weight category.
\item[(2)]
$\AC$ has a homological stratification with respect to $\L$  in which every strata category is equivalent to $\rmod \G_{\l}$ for some finite dimensional division algebra $\G_{\l}$.
\item[(3)]
$\AC$ has a 2-homological stratification with respect to $\L$  in which every strata category is equivalent to $\rmod \G_{\l}$ for some finite dimensional division algebra $\G_{\l}$.
\end{enumerate}
\end{cor}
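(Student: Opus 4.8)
The plan is to deduce all three implications from Theorem~\ref{eStratTheorem}, Corollary~\ref{recolenoughprojcor}, and the observation recorded just above (after the definition of $\e$-stratified category) that a finite abelian category is a highest weight category precisely when it admits a stratification whose strata categories are semisimple and with respect to which it is $\e$-stratified for some — equivalently every — sign function $\e$. I would prove the cycle $(1)\Rightarrow(2)\Rightarrow(3)\Rightarrow(1)$. The implication $(2)\Rightarrow(3)$ is immediate, since a homological recollement is $k$-homological for every $k$, so a homological stratification is in particular $2$-homological.

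For $(3)\Rightarrow(1)$: if $\G_\l$ is a finite dimensional division $\km$-algebra then $\rmod\G_\l$ is a semisimple finite $\km$-linear abelian category, so Corollary~\ref{recolenoughprojcor} gives that $\AC$ is itself finite over $\km$. Next, in a semisimple category every short exact sequence splits, and $j^\l_!$ (resp.\ $j^\l_*$), being right (resp.\ left) exact, therefore carries monomorphisms to monomorphisms (resp.\ epimorphisms to epimorphisms) and hence is exact; thus the given stratification is $\e$-exact for \emph{every} sign function $\e$. Combining this $\e$-exactness with the hypothesis that the stratification is $2$-homological, the implication $(2)\Rightarrow(3)$ of Theorem~\ref{eStratTheorem} shows that $\AC$ is $\e$-stratified with respect to this stratification, for every $\e$. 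As the strata categories are semisimple, the cited observation then gives that $\AC$ is a highest weight category. (If one prefers to avoid invoking that observation, one instead checks $(\mathrm{HW1})$--$(\mathrm{HW4})$ directly: $\End_\AC(\D_\l)\simeq\End_{\AC_\l}(L_\l)=\G_\l$ handles $(\mathrm{HW1})$, equation~(\ref{exceptional}) handles $(\mathrm{HW2})$, Porism~\ref{standards} supplies the short exact sequence of $(\mathrm{HW3})$, and $(\mathrm{HW4})$ is clear; the delicate point is the strictness $\l<\mu$ in $(\mathrm{HW3})$, which follows from $[\D_\l:L_\l]=1$.)

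For $(1)\Rightarrow(2)$: a highest weight category carries its Cline--Parshall--Scott stratification recalled in Section~\ref{intro}, whose Serre subcategories are the $\AC_{\L'}$ generated by $\{\D_\mu\mid\mu\in\L'\}$ and whose strata categories are $\AC_\l=\End_\AC(\D_\l)^{op}\Mod$; by axiom $(\mathrm{HW1})$ and finiteness over $\km$, $\G_\l:=\End_\AC(\D_\l)$ is a finite dimensional division $\km$-algebra and $\AC_\l\simeq\rmod\G_\l$, which is semisimple. With respect to this stratification $\AC$ is $\e$-stratified for every $\e$ (by the same observation), and the stratification is $\e$-exact because the strata categories are semisimple; hence the implication $(3)\Rightarrow(1)$ of Theorem~\ref{eStratTheorem} shows that it is homological, which is exactly statement~(2).

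The main difficulty is more bookkeeping than depth: one must carefully identify the stratification produced abstractly by Theorem~\ref{eStratTheorem} with the explicit Serre-subcategory stratification of a highest weight category, verify that ``semisimple strata indexed by $\L$'' is equivalent to ``each $\AC_\l\simeq\rmod\G_\l$ for a finite dimensional division algebra $\G_\l$'', and check that the passage through the equivalence ``highest weight $\iff$ semisimple strata $+$ $\e$-stratified'' is legitimate in both directions with the stratification witnessing the right-hand side taken to be the one indexed by $\L$. Once these identifications and the exactness of $j^\l_!,j^\l_*$ over semisimple strata are in place, the argument is purely formal.
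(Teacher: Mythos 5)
Your proposal takes essentially the same route as the paper's proof: the same cycle $(1)\Rightarrow(2)\Rightarrow(3)\Rightarrow(1)$, with $(2)\Rightarrow(3)$ trivial, finiteness of $\AC$ drawn from Corollary~\ref{recolenoughprojcor}, the reduction to semisimple strata (so that $j^{\l}_!$ and $j^{\l}_*$ are exact and $\D_\e$ is independent of $\e$), and Theorem~\ref{eStratTheorem} doing the homological work. Two points of divergence are worth noting. For $(1)\Rightarrow(2)$ the paper simply defers to Krause's proof that the Cline--Parshall--Scott stratification is homological, whereas you re-derive this from the implication $(3)\Rightarrow(1)$ of Theorem~\ref{eStratTheorem}; this is a tidy, more self-contained variant, but it rests on the introductory (and in this paper unproved) observation that a highest weight category is $\e$-stratified with respect to the CPS stratification for every $\e$. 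Second, a small imprecision in your parenthetical alternative for $(3)\Rightarrow(1)$: Porism~\ref{standards} only provides a filtration of $Q(b)$ by \emph{quotients} of standard objects, not by standard objects themselves, so it does not by itself yield $(\mathrm{HW3})$. The paper instead extracts the genuine $\D$-filtration of $P_\l$ from Theorem~\ref{eStratTheorem} together with the exactness of $j^{\l}_!$ (take $\e\equiv-$, so $\D_\e=\overline{\D}=\D$ because the strata are semisimple), and then the strictness $\mu>\l$ in $(\mathrm{HW3})$ follows, as you indicate, from $L_\l$ having multiplicity one in the head of $P_\l$.
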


\begin{proof}
$(1) \implies (2)$.
If $\AC$ is a highest weight category with respect to $\L$, then a homological stratification of $\AC$ by $\L$ is constructed as follows: For a lower subposet $\L' \subset \L$ define $\AC_{\L'}$ to be the Serre subcategory of $\AC$ generated by the standard objects $\D_{\l}$ in which $\l \in \L'$. Then for any maximal $\mu \in \L'$ there is a homological recollement of abelian categories
\[
\begin{tikzcd}
	{\AC_{\L' \setminus \{\mu\}}} && \AC_{\L'} && {\rmod \End_{\AC} (\D_{\l})}
	\arrow["{i_{*}}", from=1-1, to=1-3]
	\arrow["{i^{*}}"', shift right=5, from=1-3, to=1-1]
	\arrow["{i^{!}}", shift left=5, from=1-3, to=1-1]
	\arrow["{ j^{*}}", from=1-3, to=1-5]
	\arrow["{j_{!}}"', shift right=5, from=1-5, to=1-3]
	\arrow["{j_{*}}", shift left=5, from=1-5, to=1-3]
\end{tikzcd}
\]
in which $j^* = \Hom_{\AC_{\L'}} (\D_{\mu}, -)$ (see the proof of \cite[Theorem 3.4]{Kra17a}).

$(2) \implies (3).$ This is obvious.

$(3) \implies (1).$
 Suppose $\AC$ has a 2-homological stratification with strata categories of the form $\AC_{\l} = \rmod \G_{\l}$ for a finite dimensional division ring $\G_{\l}$. Then $\AC$ is finite (by Corollary \ref{recolenoughprojcor}).
Let $L_{\l}$ denote the unique simple object in $\AC_{\l}$. Define $\D_{\l} := j_{!}^{\l} L_{\l}$ and let $P_{\l}$ be the projective cover of $j_{!*}^{\l} L_{\l}$ in $\AC$. Statement (HW1) holds since $j_!$ is fully-faithful, Statement (HW2) is exactly equation (\ref{exceptional}), Statement (HW3) follows from Theorem \ref{eStratTheorem} (since each $j_{!}^{\l}$ is exact), and Statement (HW4) is obvious. 
\end{proof}

\bibliographystyle{alpha}
\bibliography{bibliography/bibliography}
\vspace{1cm}

\end{document}